%
% euler5  
% new version submited to arXiv, 12 March 2014
%
\documentclass[10pt]{article}
\usepackage{amsthm,amsfonts,amsmath,amscd,amssymb,verbatim}
\usepackage{graphicx}
\parindent=0pt
\parskip=4pt
\title{A note on the stationary Euler equations of hydrodynamics}
\author{K.~Cieliebak
%\thanks{Supported by the Simons Center for
%    Geometry and Physics, Stony Brook, and the Institute for Advanced
%    Study, Princeton.}\; 
and E.~Volkov}
\date{}
%
% Environments
%
\theoremstyle{plain}
\newtheorem{theorem}{Theorem}[section]
\newtheorem{thm}[theorem]{Theorem}

\newtheorem{cor}[theorem]{Corollary}

\newtheorem{prop}[theorem]{Proposition}
\newtheorem{lemma}[theorem]{Lemma}

\theoremstyle{remark}

\newtheorem{remark}[theorem]{Remark}
\newtheorem{example}[theorem]{Example}

%\theoremstyle{definition}
%\newtheorem*{definition}{Definition}
%
%%%%%%% macros
%

%

%\newcommand{\X}{{\underbar{X}}}

%
\newcommand{\ol}{\overline}

\newcommand{\p}{\partial}

\newcommand{\om}{\omega}

\newcommand{\eps}{\varepsilon}

\newcommand{\la}{\langle}
\newcommand{\ra}{\rangle}
\newcommand{\wt}{\widetilde}
\newcommand{\wh}{\widehat}
\newcommand{\N}{{\mathbb{N}}}
\newcommand{\Z}{{\mathbb{Z}}}
\newcommand{\R}{{\mathbb{R}}}
\newcommand{\C}{{\mathbb{C}}}

\newcommand{\HHH}{{\mathbb{H}}}
%

%

  % cokernel
        % image

\newcommand{\curl}{{\rm curl\,}}

\renewcommand{\min}{{\rm min}}
\renewcommand{\max}{{\rm max}}

\newcommand{\dist}{{\rm dist}}

\newcommand{\FF}{\mathcal{F}}

%
         % Lie algebra of G

\newcommand{\fm}{{\mathfrak m}}
%
%
%
%
%%%%%%%%%%%%%%%%%%%%%%%%%%%%%%%%%%%%%%%%%%%%%%%%%%%%%%%%%%%%%%%%%%%%%
%%%%%%%%%%%%%%%%%%%%%%%% BEGIN PAPER %%%%%%%%%%%%%%%%%%%%%%%%%%%%%%%%
%%%%%%%%%%%%%%%%%%%%%%%%%%%%%%%%%%%%%%%%%%%%%%%%%%%%%%%%%%%%%%%%%%%%%

\begin{document}

\maketitle

\begin{abstract}
\noindent
This note concerns stationary solutions of the Euler equations for an
ideal fluid on a closed 3-manifold. We prove that if the velocity
field of such a solution has no zeroes and real analytic Bernoulli
function, then it can be rescaled to the Reeb vector field of a
stable Hamiltonian structure. In particular, such a vector field has a
periodic orbit unless the 3-manifold is a torus bundle over the
circle. We provide a counterexample showing that the correspondence
breaks down without the real analyticity hypothesis.  
\end{abstract}

%%%%%%%%%%%%%%%%%%%%%%%%%%%%%%%%%%%%%%%%%%%%%%%%%%%%%%%%%%%%%%%%%%%%%
\section{Introduction}\label{sec:intro}
%%%%%%%%%%%%%%%%%%%%%%%%%%%%%%%%%%%%%%%%%%%%%%%%%%%%%%%%%%%%%%%%%%%%%

The time evolution of an incompressible, inviscous (``ideal'') fluid
of constant density is described by the {\em Euler equations} 
$$
   \partial_tX+\nabla_XX = -\nabla p,\qquad {\rm div}\,X=0
$$
for the velocity field $X$ of the fluid and its pressure $p$ (both
time-dependent). These equations make sense on any Riemannian $3$-manifold 
$(M,g)$ equipped with a volume form $\mu$ (not necessarily the one
induced by the metric). Note that the first equation involves only the
metric (via the covariant derivative and the gradient), while the
second one involves only the volume form (via the divergence defined
by $L_X\mu=({\rm div}\,X)\mu$).  

%In 1966 Arnold observed that the motion of an ideal
%fluid can be interpreted as a geodesic flow on the group of volume
%preserving diffeomorphisms [Arn66]. Since this is
%an infinite dimensional Hamiltonian system, formal
%symplectic arguments yield structural results such as the existence of
%many conserved quantities [AK98]. 
%
%A more concrete link to symplectic geometry was established in 1999 by
%Etnyre and Ghrist [EG00a]. They observed that an important class of
%stationary solutions of the Euler equations, the
%``rotational Beltrami fields'', agrees with the class of Reeb vector
%fields for some contact form. Using this correspondence and
%techniques from contact geometry, they proved several structure
%results about stationary solutions of the Euler equations
%such as: the existence of a periodic orbit for every real analytic
%solution on the $3$-sphere [EG00a]; the existence of solutions whose
%periodic orbits realize all possible knot and link types [EG00b]; and
%linear instability for generic curl eigenfields on the $3$-torus [EG05]. 
%A more general correspondence between Beltrami fields and Reeb vector
%fields of stable Hamiltonian structures was found in [CV10].  

In this note we are interested in stationary solutions of the Euler
equations, i.e., time-independent pairs $(X,p)$ satisfying the {\em
  stationary Euler equations}
\begin{equation}\label{eq1}
   \nabla_XX=-\nabla p,
\end{equation}
\begin{equation}\label{eq2}
   L_X\mu=0.
\end{equation}
Since by equation~\eqref{eq1} the velocity field $X$ determines the
pressure $p$ uniquely up to a constant, we will often suppress
explicit mentioning of $p$ and refer to $X$ as a stationary solution
of the Euler equations. 
Equation~\eqref{eq1} can equivalently be written as (see
e.g.~\cite{AK98}) 
\begin{equation}\label{eq3}
   \curl X\times X = -\nabla h,
\end{equation}
where $h:=p+\frac{|X|^2}{2}$ is the {\em Bernoulli function}, and the
curl and cross product of vector fields are defined by
$d(i_Xg) = i_{{\rm curl}\,X}\mu$ and $i_{X\times Y}g = i_Yi_X\mu$. 
Another equivalent way of writing equation~\eqref{eq1}, which will be
particularly useful for our purposes, is in terms of
the $1$-form $\lambda:=i_Xg$ as (see e.g.~\cite{CV10}) 
\begin{equation}\label{eq4}
　　　i_Xd\lambda = -dh.
\end{equation}
Note also that equation~\eqref{eq2} is equivalent to closedness of the
$2$-form $\om:=i_X\mu$. 
The goal of this note is to prove the following result. 

\begin{thm}\label{thm:real-anal-stable}
Let $X$ be a solution of the stationary Euler
equations~\eqref{eq1},~\eqref{eq2} with respect to a metric $g$ and
volume form $\mu$ on a closed, oriented $3$-manifold $M$. 
Suppose that $X$ has no zeroes and its Bernoulli function is real
analytic (for some real analytic structure on $M$). Then $X$ also
solves equation~\eqref{eq1} with respect to a different metric $\wh g$
such that the corresponding Bernoulli function is constant. 
\end{thm}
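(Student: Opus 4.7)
My plan is to reduce the claim to the construction of a $1$-form $\hat\lambda \in \Omega^1(M)$ with $\hat\lambda(X) > 0$ everywhere and $i_X d\hat\lambda = 0$. Given such a $\hat\lambda$, one builds $\hat g$ satisfying $\hat\lambda = i_X \hat g$ by declaring $\ker\hat\lambda$ to be $\hat g$-orthogonal to $X$, setting $\hat g(X,X) = \hat\lambda(X)$, and choosing any positive-definite extension of $\hat g$ on $\ker\hat\lambda$; the equivalent formulation~\eqref{eq4} of the Euler equation for $\hat g$ then reads $d\hat h = -i_X d\hat\lambda = 0$, so the new Bernoulli function $\hat h$ is locally constant, and hence constant if $M$ is connected.

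My ansatz is $\hat\lambda := \lambda - h\,\nu$ with $\nu \in \Omega^1(M)$ satisfying $\nu(X) = 1$. A direct computation, using $i_X dh = 0$, gives
\begin{equation*}
  i_X d\hat\lambda \;=\; -dh - i_X(dh \wedge \nu) - h\, i_X d\nu \;=\; -dh + dh - h\, i_X d\nu \;=\; -h\, i_X d\nu,
\end{equation*}
and $\hat\lambda(X) = |X|^2 - h$. If $h$ is locally constant then $i_X d\lambda = 0$ already and we take $\hat g = g$. Otherwise, by shifting the pressure by a constant (allowed since $p$ is determined only up to a constant), we may simultaneously arrange $|X|^2 - h > 0$ and $h \ne 0$ everywhere on the compact $M$; the problem then reduces to finding a globally $X$-invariant 1-form $\nu$ on $M$ with $\nu(X) = 1$.

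For the construction of $\nu$, real analyticity of $h$ is the key hypothesis. The critical set $\Sigma := \{dh = 0\}$ is a proper real-analytic subvariety, and $h$ takes only finitely many critical values. On each component of $M \setminus \Sigma$, the regular level sets of $h$ are $2$-surfaces invariant under $X$, and Arnold's structure theorem organizes them into finitely many open ``cells'' fibered by invariant $2$-tori (or cylinders) on which $X$ acts as a linear flow. On a typical torus-cell, coordinates $(t, \theta_1, \theta_2)$ with $t = h$ and $X = \xi_1(t)\partial_{\theta_1} + \xi_2(t)\partial_{\theta_2}$ provide a manifestly $X$-invariant $\nu_{\mathrm{loc}} := a(t)\, d\theta_1 + b(t)\, d\theta_2$ with $a\xi_1 + b\xi_2 = 1$. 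The main obstacle will be the globalization: extending $\nu$ smoothly across $\Sigma$ and across interfaces between cells while preserving $X$-invariance. Here I would exploit analytic rigidity --- via the Lojasiewicz inequality and the finite number of critical values of $h$ --- to interpolate the local $\nu$'s. The counterexample promised in the abstract confirms that without real analyticity this step can genuinely fail.
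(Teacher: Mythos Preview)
Your opening reduction is correct and matches the paper: the theorem amounts to finding a \emph{stabilizing $1$-form}, i.e., some $\nu$ with $\nu(X)>0$ and $i_Xd\nu=0$, and then taking $\hat\lambda=\nu$. But your ansatz $\hat\lambda=\lambda-h\nu$ is a detour that makes the problem strictly harder. After your computation you need $\nu$ with $\nu(X)=1$ and $i_Xd\nu=0$; such a $\nu$ is \emph{already} a stabilizing $1$-form, so the passage through $\lambda-h\nu$ buys nothing, while the normalization $\nu(X)=1$ (rather than merely $>0$) is a genuine extra constraint that can fail even when a stabilizing form exists.

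Concretely, your local form $\nu_{\mathrm{loc}}=a(t)\,d\theta_1+b(t)\,d\theta_2$ is \emph{not} ``manifestly $X$-invariant'': using $a\xi_1+b\xi_2=1$ one finds $i_Xd\nu_{\mathrm{loc}}=(a\xi_1'+b\xi_2')\,dt$, so you also need $a\xi_1'+b\xi_2'=0$. This $2\times 2$ linear system for $(a,b)$ is unsolvable precisely when $\xi_1\xi_2'-\xi_2\xi_1'=0$ but $(\xi_1',\xi_2')\neq 0$, i.e., when the \emph{direction} of $X$ is constant on the cell while its magnitude varies in $t$. An averaging argument over $T^2$ shows that in this situation no $\nu$ whatsoever (even a non-$T^2$-invariant one) can satisfy both $\nu(X)=1$ and $i_Xd\nu=0$ on that region. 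This case does occur, and the paper handles it by asking only for $\nu(X)>0$: one first constructs a \emph{closed} $\nu$ near the singular level sets (via a detailed structural analysis of those sets --- periodic orbits, tori, Klein bottles, cylinders, M\"obius strips --- not the \L ojasiewicz inequality), and then extends over each integrable region $(a,d)\times T^2$. When the direction of $X$ is constant there is a cohomological obstruction (certain foliation-cycle pairings $c_\pm(\nu)$ must agree), and the remaining freedom in normalizing $\nu$ near the singular sets is used to kill it. Your sketch does not identify this obstruction, and the strengthened condition $\nu(X)=1$ removes exactly the freedom needed to overcome it.
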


Let us discuss some consequences of this results.
%, keeping the notation of the theorem.  

(1) Following earlier work of Etnyre and Ghrist~\cite{EG00a}, it was
observed in~\cite{CV10} that a vector field as in
Theorem~\ref{thm:real-anal-stable} has the following symplectic
interpretation. The nowhere vanishing closed $2$-form $\om$, the
$1$-form $\wt\lambda=i_X\wh g$ and the rescaled vector field $\wt
X=X/\wt\lambda(X)$ satisfy
\begin{equation}\label{eq:SHS}
　　　i_{\wt X}d\wt\lambda = i_{\wt X}\om = 0,\qquad \wt\lambda(\wt X)=1.
\end{equation}
In symplectic terminology~\cite{BEHWZ03}, this means that
$(\om,\wt\lambda)$ is a {\em stable Hamiltonian structure with Reeb
vector field $\wt X$}. So we have shown

\begin{cor}\label{cor:real-anal-stable}
A vector field $X$ as in Theorem~\ref{thm:real-anal-stable} can be
rescaled by a positive function to the Reeb vector field of some stable
Hamiltonian structure. 
\end{cor}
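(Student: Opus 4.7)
The plan is to deduce the corollary almost mechanically from Theorem~\ref{thm:real-anal-stable} together with the reformulation~\eqref{eq4} of the stationary Euler equation. First, I would apply Theorem~\ref{thm:real-anal-stable} to obtain a Riemannian metric $\wh g$ on $M$ for which $X$ solves \eqref{eq1} but now has constant Bernoulli function $\wh h$. Set $\wt\lambda := i_X\wh g$ and $\om := i_X\mu$. Since $X$ is nowhere vanishing, the function $\wt\lambda(X) = \wh g(X,X)$ is strictly positive, so the rescaling $\wt X := X/\wt\lambda(X)$ is a well-defined smooth vector field, and it is by construction a positive multiple of $X$.

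Next I would verify the three identities in \eqref{eq:SHS}. The condition $\wt\lambda(\wt X) = 1$ is immediate from the definition of $\wt X$. Applying \eqref{eq4} with respect to $\wh g$ yields $i_X d\wt\lambda = -d\wh h = 0$ since $\wh h$ is constant; because $\wt X$ is a positive multiple of $X$, this gives $i_{\wt X}d\wt\lambda = 0$. Finally, $i_{\wt X}\om = i_{\wt X}i_X\mu$ vanishes since $i_X i_X\mu = 0$. Closedness of $\om$ is automatic from \eqref{eq2} and Cartan's formula, $0 = L_X\mu = d(i_X\mu) = d\om$, using $d\mu = 0$.

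To verify that $(\om,\wt\lambda)$ is a genuine stable Hamiltonian structure and not merely satisfies \eqref{eq:SHS}, I would note that $\om = i_X\mu$ is nowhere vanishing (as $\mu$ is a volume form and $X$ has no zeros), and that the identity $\alpha\wedge i_X\mu = \alpha(X)\mu$, valid for any $1$-form $\alpha$ on a $3$-manifold, gives $\wt\lambda\wedge\om = \wh g(X,X)\,\mu$, a positive multiple of $\mu$. Since $\om$ has rank $2$ everywhere, $\ker\om$ is the line spanned by $\wt X$, so $i_{\wt X}d\wt\lambda = 0$ says precisely $\ker\om\subset\ker d\wt\lambda$, while $\wt\lambda(\wt X) = 1$ ensures $\wt\lambda$ trivialises this line. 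The vector field $\wt X$ is then the Reeb vector field of $(\om,\wt\lambda)$.

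There is no real obstacle here: the content of the corollary lies entirely in Theorem~\ref{thm:real-anal-stable}, which supplies exactly the metric needed to turn the Euler identity \eqref{eq4} into the Reeb-type relation $i_X d\wt\lambda = 0$. The remaining work is bookkeeping with interior products on a $3$-manifold.
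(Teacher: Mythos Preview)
Your argument is correct and follows exactly the route the paper takes: apply Theorem~\ref{thm:real-anal-stable} to obtain $\wh g$, set $\wt\lambda=i_X\wh g$, $\om=i_X\mu$, $\wt X=X/\wt\lambda(X)$, and verify~\eqref{eq:SHS}. In fact you supply more detail than the paper, which simply records~\eqref{eq:SHS} as an observation (citing~\cite{CV10}) and declares the corollary shown; your explicit checks that $\om$ is closed and nowhere vanishing and that $\wt\lambda\wedge\om>0$ fill in what the paper leaves implicit.
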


(2) By Corollary~\ref{cor:real-anal-stable}, the $1$-dimensional
oriented foliation defined by $X$ has the 
same dynamical properties as the foliation defined by the Reeb vector
field of some stable Hamiltonian structure. In particular, the proof
of the Weinstein conjecture for stable Hamiltonian structures by
Hutchings and Taubes~\cite{HT09} yields

\begin{cor}\label{cor:real-anal-per}
Let $M$ be a closed, oriented $3$-manifold which is not a $2$-torus
bundle over the circle. Then every stationary solution of the Euler
equations on $M$ with real analytic Bernoulli function possesses a
zero or a periodic orbit. 
\end{cor}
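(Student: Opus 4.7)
The plan is to argue by contrapositive: assume that the given stationary solution $X$ on $M$ has no zero, and show that under the hypotheses on $M$ and on the Bernoulli function, it must have a periodic orbit. The proof will be essentially a concatenation of Theorem~\ref{thm:real-anal-stable}, Corollary~\ref{cor:real-anal-stable}, and the Hutchings--Taubes theorem on the Weinstein conjecture for stable Hamiltonian structures on closed oriented 3-manifolds.

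First, since $X$ is assumed to have no zero and its Bernoulli function $h=p+|X|^2/2$ is real analytic, I would invoke Theorem~\ref{thm:real-anal-stable} to produce a new metric $\wh g$ on $M$ with respect to which $X$ still solves~\eqref{eq1} and with respect to which the resulting Bernoulli function $\wh h$ is constant. This is the heavy lifting of the note; here it is used as a black box.

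Next I would apply Corollary~\ref{cor:real-anal-stable}: the vector field $\wt X = X/\wh g(X,X)$ is the Reeb vector field of the stable Hamiltonian structure $(\om,\wt\lambda)$ described in~\eqref{eq:SHS}, where $\om=i_X\mu$ and $\wt\lambda=i_X\wh g$. Since $\wt X$ is a pointwise positive rescaling of $X$ (the denominator is nowhere zero because $X$ has no zero), the unparametrised orbits of $\wt X$ coincide with those of $X$; in particular, $X$ possesses a periodic orbit if and only if $\wt X$ does.

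It then remains to apply the Hutchings--Taubes theorem~\cite{HT09}, which asserts that the Reeb vector field of any stable Hamiltonian structure on a closed oriented 3-manifold admits a closed orbit, with the sole possible exception of the case in which $M$ is a $2$-torus bundle over $S^1$. Since $M$ is assumed not to be such a bundle, $\wt X$ has a periodic orbit, hence so does $X$. There is no genuine obstacle to overcome in this deduction itself — the technical content lies entirely in Theorem~\ref{thm:real-anal-stable} and in~\cite{HT09}; the only point to verify is that a positive rescaling preserves the set of periodic orbits as a collection of unparametrised closed curves in $M$, which is immediate from the chain rule.
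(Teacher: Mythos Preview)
Your proof is correct and is exactly the derivation the paper gives in the introduction: apply Theorem~\ref{thm:real-anal-stable} and Corollary~\ref{cor:real-anal-stable} to rescale $X$ to a Reeb vector field of a stable Hamiltonian structure, then invoke the Hutchings--Taubes theorem~\cite{HT09}. The paper also sketches, in the remark following Lemma~\ref{lem:real-anal-sets}, a more direct argument that bypasses the full strength of Theorem~\ref{thm:real-anal-stable}, but your route is the one presented as the primary proof.
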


This result was previously proved by Etnyre and Ghrist~\cite{EG00a} in the
case of the $3$-sphere, and combining their arguments with~\cite{HT09}
yields a simple direct proof of Corollary~\ref{cor:real-anal-per}; see
Section~\ref{sec:sing} below. 
\medskip

(3) Let $\wt\lambda$, $\wt X$ be as in (1) and define a new metric
$\wt g:=\wt\lambda(X)\wh g$. Then $\wt\lambda=i_{\wt X}\wt g$, i.e.,
$\wt\lambda$ is the dual $1$-form to $\wt X$ with respect to the
metric $\wt g$. Equation~\eqref{eq:SHS} implies that $\wt X$ satisfies
equation~\eqref{eq3} with respect to the metric $\wt g$ and constant
Bernoulli function $\wt h$, and equation~\eqref{eq2} with respect to
the volume form $\wt\mu=\wt\lambda(X)\mu$. Moreover, the corresponding
pressure $\wt p=\wt h-\wt g(\wt X,\wt X)/2$ is constant, so we have shown

\begin{cor}\label{cor:real-anal-pressure}
A vector field $X$ as in Theorem~\ref{thm:real-anal-stable} can be
rescaled by a positive function to a solution of the stationary Euler
equations, with respect to some new metric and volume form, such that
the corresponding pressure is constant. 
\end{cor}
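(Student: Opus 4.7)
The plan is to carry out the verification sketched in item (3) of the discussion preceding the corollary. First, I would invoke Theorem~\ref{thm:real-anal-stable} to obtain the auxiliary metric $\wh g$ with respect to which $X$ solves~\eqref{eq1} with constant Bernoulli function, and then apply the construction from Corollary~\ref{cor:real-anal-stable} to produce the stable Hamiltonian structure $(\om,\wt\lambda)$ with $\om=i_X\mu$, $\wt\lambda=i_X\wh g$, and Reeb vector field $\wt X = X/\wt\lambda(X)$. This $\wt X$ is the desired positive rescaling of $X$, since $\wt\lambda(X) = \wh g(X,X) > 0$ as $X$ is nowhere zero.

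Next, I would introduce the rescaled metric $\wt g := \wt\lambda(X)\,\wh g$ and volume form $\wt\mu := \wt\lambda(X)\,\mu$, and verify the identity $\wt\lambda = i_{\wt X}\wt g$ by a direct calculation: for any tangent vector $Y$, one has $\wt g(\wt X,Y) = \wt\lambda(X)\,\wh g\bigl(X/\wt\lambda(X),Y\bigr) = \wh g(X,Y) = \wt\lambda(Y)$. Hence $\wt\lambda$ is the $\wt g$-dual $1$-form of $\wt X$, and in particular $\wt g(\wt X,\wt X) = \wt\lambda(\wt X) = 1$.

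I would then check that $(\wt X,\wt g,\wt\mu)$ solves the stationary Euler equations. For~\eqref{eq2}, the computation $i_{\wt X}\wt\mu = i_X\mu = \om$ together with $d\om = 0$ (the incompressibility of the original $X$) yields $L_{\wt X}\wt\mu = d\,i_{\wt X}\wt\mu = 0$. For~\eqref{eq1}, I would use the reformulation~\eqref{eq4}: the Bernoulli function $\wt h$ of the rescaled data is characterized by $i_{\wt X}d\wt\lambda = -d\wt h$, and the stable Hamiltonian relation~\eqref{eq:SHS} forces $i_{\wt X}d\wt\lambda = 0$, so $d\wt h = 0$ and $\wt h$ is locally constant.

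Finally, the corresponding pressure reads $\wt p = \wt h - \wt g(\wt X,\wt X)/2 = \wt h - \tfrac12$, which is locally constant on $M$. No genuine obstacle arises here: the corollary is a purely formal bookkeeping consequence of Theorem~\ref{thm:real-anal-stable} combined with the stable Hamiltonian reformulation from Corollary~\ref{cor:real-anal-stable}, with every step reducing to a one-line identity.
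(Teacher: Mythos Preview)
Your proof is correct and follows exactly the route sketched in item (3) of the paper's discussion: define $\wt g=\wt\lambda(X)\wh g$ and $\wt\mu=\wt\lambda(X)\mu$, verify $\wt\lambda=i_{\wt X}\wt g$, and read off from~\eqref{eq:SHS} that the Bernoulli function and hence the pressure are constant. You simply spell out the one-line identities (e.g.\ $i_{\wt X}\wt\mu=\om$ and $\wt g(\wt X,\wt X)=1$) that the paper leaves implicit.
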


Geometrically, the equations $\wt\nabla_{\wt X}\wt X=0$ and $\wt g(\wt
X,\wt X)=1$ for the rescaled vector field $\wt X$ and the metric $\wt
g$ mean that all orbits of $\wt X$ are geodesics parametrized by
arclength; see~\cite{Su78} for further discussion of the question of
geodesibility. 

Corollary~\ref{cor:real-anal-pressure} somewhat resembles the
description of solutions of a mechanical system with Lagrangian
$L=|\dot q|^2/2-V(q)$ and energy $E>\max\,V$ as geodesics of the Jacobi
metric $\wt g=\sqrt{E-V(q)}g$. It would be nice to better understand
its physical interpretation. 
\medskip

(4) Corollary~\ref{cor:real-anal-stable} 
opens up the possibility of studying the dynamics of stationary
solutions of the Euler equations by holomorphic curve techniques as
in~\cite{HWZ98,HT09}, with implications for questions of hydrodynamical
stability. To illustrate this line of argument, let us recall an
instability criterion of Friedlander and Vishik~\cite{FV92}: If a
stationary solution $X$ of the Euler equations possesses a hyperbolic
zero or periodic orbit, then it is linearly unstable in the sense that
the linearized Euler equations at $X$ have solutions whose $L^2$-norm
grows exponentially in time. On the other hand, Hutchings and
Taubes~\cite{HT09} proved that each Reeb vector field with nondegenerate
periodic orbits on a closed $3$-manifold which is not a lens space has
a hyperbolc periodic orbit. Combining these two results with the
arguments in~\cite{EG05} in the case of the $3$-torus, we obtain 

\begin{cor}\label{cor:lin-unstable}
For a generic metric on a closed $3$-manifold with is not a lens
space, each curl eigenfield $\curl X=\lambda X$ with $\lambda\neq 0$
has a hyperbolic zero or periodic orbit, and is therefore linearly
unstable. 
\end{cor}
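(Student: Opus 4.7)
The plan is to combine the Euler interpretation of curl eigenfields with the Reeb rescaling from Theorem~\ref{thm:real-anal-stable}, and then invoke the dynamical results cited just above. First observe that for a curl eigenfield $X$ with $\curl X=\lambda X$ and $\lambda\neq 0$, equation~\eqref{eq3} reads $\curl X\times X = \lambda\,X\times X = 0 = -\nabla h$, so the Bernoulli function is constant and in particular real analytic. Hence every such $X$ is a stationary Euler solution to which Theorem~\ref{thm:real-anal-stable} applies whenever $X$ has no zeros.

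I would then split into two cases. If $X$ is nowhere vanishing, Corollary~\ref{cor:real-anal-stable} provides a rescaling to the Reeb field $\wt X$ of a stable Hamiltonian structure, and Corollary~\ref{cor:real-anal-per} yields a periodic orbit of $\wt X$---hence of $X$---unless $M$ is a 2-torus bundle over $S^1$; the torus-bundle exceptions are handled by importing the arguments of~\cite{EG05} for $T^3$. A standard Sard--Smale transversality argument then shows that for a residual set of metrics the periodic orbits of the (finitely many linearly independent) $\lambda$-eigenfields are nondegenerate, and the theorem of Hutchings--Taubes~\cite{HT09} turns this into a hyperbolic periodic orbit since $M$ is not a lens space. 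If instead $X$ has a zero $p$, then $\curl X(p)=0$ as well, and one must show that for generic $g$ the linearization $D_pX$ is hyperbolic. Because $\div X=0$ the trace of $D_pX$ vanishes, so hyperbolicity amounts to excluding eigenvalues on the imaginary axis (including $0$); I would run another Sard--Smale argument on the universal moduli of pairs $(g,X)$ with $\curl X=\lambda X$ for some $\lambda\neq 0$ and $X(p)=0$, using the 1-jet evaluation at the zero as the relevant Fredholm section.

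Combining the two cases, $X$ possesses either a hyperbolic zero or a hyperbolic periodic orbit, and the Friedlander--Vishik criterion~\cite{FV92} then produces linear instability. The main obstacle I anticipate is the joint genericity statement: the operator $\curl$ depends on both $g$ and $\mu$, and its spectrum and eigenspaces vary only semicontinuously in $g$, with jumps at multiplicity crossings, so one cannot parametrize $X$ smoothly by $g$ over the whole space of metrics. The natural way around this is Baire category along a countable exhaustion of the spectrum: for each eigenvalue $\lambda$ of a reference metric one shows that the set of metrics in a fixed Banach completion for which some nearby $\lambda$-eigenfield is dynamically degenerate (in the zero or periodic-orbit sense above) is meager, and then intersects the resulting countable family of residual sets.
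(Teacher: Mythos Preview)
Your outline is broadly aligned with the paper's, but it takes a detour that is unnecessary and obscures the actual source of the argument.  First, invoking Theorem~\ref{thm:real-anal-stable} is superfluous here: for a curl eigenfield with $\lambda\neq 0$ the Bernoulli function is already constant, so $i_Xd(i_Xg)=0$ and $d(i_Xg)=\lambda\,i_X\mu$ is nondegenerate on $\ker(i_Xg)$; thus (after dividing by $|X|^2$) a nowhere vanishing $X$ is \emph{already} the Reeb field of a contact form, with no change of metric needed.  This is precisely the Etnyre--Ghrist correspondence, and the paper uses it directly rather than passing through Theorem~\ref{thm:real-anal-stable}.  Second, your appeal to Corollary~\ref{cor:real-anal-per} is redundant: once generic nondegeneracy of periodic orbits is established, the Hutchings--Taubes result you cite afterwards already produces a hyperbolic periodic orbit (and hence in particular a periodic orbit), so there is no separate existence step and no torus-bundle exception to handle.

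The substantive content of the corollary---and the part you correctly flag as the main obstacle---is the genericity statement: that for a residual set of metrics, every curl eigenfield with $\lambda\neq 0$ has all zeros and periodic orbits nondegenerate.  The paper does not prove this; it simply imports ``the arguments in~\cite{EG05}'', where Etnyre and Ghrist carry out exactly the Sard--Smale/Baire scheme you sketch (treating the curl spectrum eigenvalue by eigenvalue and handling the semicontinuity of eigenspaces).  So your proposed route through transversality is not a new idea but a reconstruction of what~\cite{EG05} already contains; the paper's ``proof'' is just the one-sentence combination of~\cite{EG05},~\cite{HT09}, and~\cite{FV92}.
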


We hope that the techniques developed in this note can lead to similar
instability results for more general stationary solutions of the Euler
equations. 
\medskip

(5) Theorem~\ref{thm:real-anal-stable} and its
Corollaries~\ref{cor:real-anal-stable}
and~\ref{cor:real-anal-pressure} become false without the real
analyticity hypothesis: 

\begin{prop}\label{prop:counterex}
There exists a smooth, nowhere vanishing vector field $X$ solving the
stationary Euler equations~\eqref{eq1},~\eqref{eq2} on some closed,
oriented $3$-manifold $M$ which {\em cannot} be rescaled by a positive
function to the Reeb vector field of a stable Hamiltonian structure. 
\end{prop}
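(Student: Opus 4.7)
The approach I would take is to construct a smooth, nowhere vanishing stationary Euler flow $X$ on a closed oriented $3$-manifold $M$ that is not a $2$-torus bundle over $S^1$, such that $X$ has no periodic orbit. Such an $X$ is immediately a counterexample: any positive rescaling $\wt X = fX$ with $f>0$ has the same oriented orbits as $X$ and hence no periodic orbit either, whereas the Weinstein conjecture for stable Hamiltonian structures proved by Hutchings and Taubes~\cite{HT09} forces every Reeb vector field of a SHS on a non-torus-bundle closed $3$-manifold to possess a periodic orbit. Hence no such rescaling can be a Reeb vector field of any SHS.

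The natural starting point is $M = S^3$ together with $X_0$, the smooth, volume-preserving, aperiodic vector field obtained from the volume-preserving refinement of K.~Kuperberg's counterexample to the Seifert conjecture: by construction $X_0$ is nowhere vanishing, divergence-free with respect to a smooth volume form, and has no periodic orbit. The bulk of the work is then to realize $X_0$ (or a small modification of it that still has no periodic orbit) as a stationary Euler flow on $S^3$: one seeks a smooth Riemannian metric $g$ on $S^3$ and a smooth function $p$ such that $\nabla_{X_0}X_0 = -\nabla p$, while keeping $X_0$ divergence-free with respect to an appropriate volume form. Since the Kuperberg plug is assembled from explicit local models, a natural strategy is to prescribe $g$ first on the large ``generic'' region where the flow is essentially a straight Reeb model, and then extend $g$ across the plug by matching local Euler metrics along the orbits, exploiting the freedom available in the transverse components of the metric.

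The hardest step is exactly this Euler realization: equation~\eqref{eq1} couples $X$, $g$, and $p$ in a genuine PDE constraint, so fitting the intricate Kuperberg dynamics into the Euler framework requires a careful orbit-by-orbit construction inside the plug together with a global gluing argument. An alternative route, avoiding the Kuperberg machinery, would be to engineer directly a smooth Euler flow whose Bernoulli function has a critical set with infinite-order flatness designed so that no $1$-form $\wt\lambda$ with $\wt\lambda(X)>0$ and $i_X d\wt\lambda = 0$ can exist on the whole manifold; this trades the dynamical obstruction for an analytic one but is similarly delicate. In either case, once the flow has been produced, the final conclusion is a one-line consequence of Corollary~\ref{cor:real-anal-stable} combined with~\cite{HT09}.
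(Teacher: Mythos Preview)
Your main line of attack has a structural obstruction that you acknowledge as ``the hardest step'' but do not actually confront. Suppose $X_0$ is nowhere vanishing, divergence-free, aperiodic on $S^3$, and you have produced a metric $g$ making it a stationary Euler solution with Bernoulli function $h$. If $h$ were constant, then $\lambda=i_{X_0}g$ would already satisfy $i_{X_0}d\lambda=0$ and $\lambda(X_0)>0$, so $(\om,\lambda)$ would itself be a stable Hamiltonian structure with Reeb field $X_0/\lambda(X_0)$, and Hutchings--Taubes would then force a periodic orbit, contradicting aperiodicity. Hence $h$ must be nonconstant. But then Arnold's structure theorem forces every regular level of $h$ to be a union of $X_0$-invariant $2$-tori carrying linear flow. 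This integrable picture on an open dense set is exactly the kind of coherent invariant structure that a Kuperberg plug is designed to destroy; there is no mechanism by which the plug dynamics can be made compatible with it, and ``match local Euler metrics along the orbits'' does not engage with this rigidity. There is also a prior gap: the smooth ``volume-preserving refinement of K.~Kuperberg's counterexample'' you invoke is not known to exist---G.~Kuperberg's volume-preserving version is only $C^1$, and the $C^\infty$ case on $S^3$ is a well-known open problem.

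The paper's construction is entirely different and does not use aperiodicity at all; its example is in fact full of periodic orbits. One starts from the geodesic flow on $S^*\Sigma$ for a closed hyperbolic surface of genus $5$, flattens the contact form near four chosen closed geodesics $\gamma_1,\dots,\gamma_4$ to create solid tori $V_i$ on which the Reeb flow is periodic, and then cuts out these solid tori and reglues their boundary collars in pairs to obtain a new closed $3$-manifold $M$ with a closed $2$-form $\om$. Ergodicity of the hyperbolic geodesic flow forces any stabilizing $1$-form on $S^*\Sigma\setminus(V_1\cup\cdots\cup V_4)$ to be of the form $c\tilde\lambda+\beta$ with $c\neq 0$ constant and $\beta$ closed. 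A homology computation then shows that the periods of such a form over the push-offs $\gamma_i'$ must satisfy $\int_{\gamma_2'}\nu-\int_{\gamma_1'}\nu=c(T_2-T_1)-2\int_F\beta$ and similarly for $3,4$; choosing the hyperbolic metric so that $T_2-T_1\neq T_4-T_3$ makes these incompatible with the equality of periods forced across the glued regions $M_{12},M_{34}$. The Euler data $(X,\lambda,h)$ on $M$ are then written down by hand on the glued pieces. Your ``alternative route'' in the last sentence is closer in spirit to this cohomological obstruction, but as stated it is a description of a hope, not a proof.
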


The proof of Theorem~\ref{thm:real-anal-stable} is based on the
observation that equation~\eqref{eq4} does not explicitly involve the
metric. If $(X,\lambda,h)$ is a solution of this equation with
$\lambda(X)>0$, then the vector field $X$ solves equation~\eqref{eq3}
with respect to any metric $g$ with $i_Xg=\lambda$. Such a
metric exists but is not unique because it can be defined arbitrarily
on $\ker\lambda$. So to prove Theorem~\ref{thm:real-anal-stable}, it
suffices to find a {\em stabilizing $1$-form}, i.e., a $1$-form
$\nu$ satisfying 
$$
   i_Xd\nu=0,\qquad \nu(X)>0. 
$$
Using the fact that $h$ is preserved by the flow of $X$ (which an
immediate consequence of equation~\eqref{eq4}), we will construct
$\nu$ in two steps. In Section~\ref{sec:sing} we construct $\nu$
near the singular level sets of $h$, and in Section~\ref{sec:reg} we 
extend it over the regular level sets. 
%In preparation for this, we analyze in Section~\ref{sec:sing}
%the singular level sets of the real analytic function $h$. 
Proposition~\ref{prop:counterex} will be proved in
Section~\ref{sec:counterex}. 
\medskip

{\bf Acknowledgements. }
We thank M.~Goresky for a helpful discussion on real analytic sets. 
Much of this research was carried out during the first author's
visits to Chuo University, the Simons Center for Geometry and
Physics, and the Institute for Advanced Study. He thanks them for
their hospitality.

%%%%%%%%%%%%%%%%%%%%%%%%%%%%%%%%%%%%%%%%%%%%%%%%%%%%%%%%%%%%%%%%%%%%%
\section{Construction of $\nu$ near the singular level sets}\label{sec:sing}
%%%%%%%%%%%%%%%%%%%%%%%%%%%%%%%%%%%%%%%%%%%%%%%%%%%%%%%%%%%%%%%%%%%%%

We begin with some properties of real analytic sets in the plane. 
Let $D\subset\R^2$ be the open unit disk and $h:D\to\R$ the
restriction of a nonconstant real analytic function defined on a
neighbourhood of $\bar D$. Let $E\subset D$ be a connected component
of $h^{-1}(0)$ containing more than one point. 
For $x\in E$ we denote by $o(x)\geq 1$ the vanishing order of
$h$ at $x$, i.e., the lowest order of a nonvanishing partial
derivative of $h$ at $x$. Set $k:=\min\{o(x)\mid x\in E\}$. 

\begin{lemma}\label{lem:real-anal-sets}
(a) $E_1:=\{x\in E\mid o(x)>k\}$ is a finite set and
$E_0:=E\setminus E_1$ is a finite union of embedded curves.  

(b) Each $x\in E_1$ has an open neighbourhood $U\subset D$ such that
$E\cap U$ is a finite union $A_1\cup\cdots\cup A_m$ of $C^1$-embedded
half-open intervals that meet at the $x$ and are disjoint away from
$x$.  

(c) Near each $x\in E_0$ (after possibly replacing $h$ by $-h$ if $k$
is even) the function $g=\sqrt[k]{h}$ is real analytic with $d_xg\neq
0$, and the level sets of $h$ form a foliation near $x$.  
\end{lemma}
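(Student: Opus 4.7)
The plan is to deduce all three parts from the standard local structure of real analytic plane curves, with the main tools being the Weierstrass preparation theorem and the Puiseux expansion.

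For the finiteness of $E_1$ in (a), observe that $E_1$ is the intersection of $E$ with the real analytic subset of a neighborhood of $\bar D$ on which every partial derivative of $h$ of order $k$ vanishes. Since $h$ is nonconstant, not every such derivative is identically zero, so this subset is proper. Because $E$ is a one-dimensional real analytic set, its intersection with a proper analytic subset has dimension $0$; combined with the extension of $h$ to a neighborhood of the compact disc $\bar D$, this forces $E_1$ to be finite.

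For (c), fix $x \in E_0$, translate $x$ to the origin, and let $P_k$ denote the degree-$k$ homogeneous part of the Taylor expansion of $h$. The crucial claim is that $P_k = \pm L^k$ for some linear form $L$: any factorization of $P_k$ involving distinct linear factors would produce several tangent directions for the branches of $E$ through $0$, and along a branch tangent to a given factor the vanishing order of $h$ would be the multiplicity of that factor, hence strictly less than $k$, contradicting $x \in E_0$. Rotating coordinates so that $L = v$, Weierstrass preparation yields $h = U \cdot W$ with $U$ a unit and $W$ a distinguished polynomial of degree $k$ in $v$; the same constancy-of-order argument forces $W = (v - \phi(u))^k$ for an analytic germ $\phi$ with $\phi(0)=0$. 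After the change $v \mapsto v - \phi(u)$, and replacing $h$ by $-h$ when $k$ is even if necessary to arrange $U(0) > 0$, the function $g := U^{1/k}\, v$ is real analytic with $g^k = h$ and $d_0 g \neq 0$, so the level sets of $h$ form a smooth foliation near $x$. In particular, $E_0$ is a $C^\omega$ one-submanifold of $D$, and since the ends of its connected components can only accumulate on the finite set $E_1$ or on $\p D$, finiteness of the number of components of $E_0$ in (a) follows from a standard compactness argument.

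For (b), fix $x \in E_1$ and choose coordinates, after a generic rotation, so that the $v$-axis is transverse to every branch of $E$ at $x$. Weierstrass preparation writes $h = U \cdot W$ with $W$ a distinguished polynomial in $v$ over the ring of convergent power series in $u$, of degree $o(x)$. By Puiseux's theorem, each real irreducible factor of $W$ defines a single analytic half-branch of $E$ through $x$ of the form $v = \psi(u^{1/p})$, of class $C^1$ at $x$; pairs of complex-conjugate irreducible factors contribute no real zeros. The resulting finitely many half-branches are pairwise disjoint away from $x$ and give the decomposition in (b).

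The main obstacle is the structural assertion in (c): the extraction of the pure $k$-th-power form $h = U \cdot v^k$ from the hypothesis that $o$ is constantly equal to $k$ along $E$ near $x$. Once this is in hand, the rest is largely bookkeeping, but the argument has to carefully rule out more degenerate factorizations of both the initial form $P_k$ and of the Weierstrass polynomial $W$.
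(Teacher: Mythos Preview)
Your treatment of (b) and (c) via Weierstrass preparation and Puiseux expansion is a valid and genuinely different route from the paper's. For (c) the paper invokes the Bruhat--Whitney theory of $C$-analytic sets to produce a generator $g$ of the local vanishing ideal at points of $E_0$ and then factors $h=g^kh_k$ iteratively; for (b) it cites the stratification theory of Goresky--MacPherson together with a curve selection lemma. Your approach replaces these citations with explicit local normal forms: forcing $P_k=\pm L^k$ from constancy of $o$ along $E$ near $x$, then forcing the Weierstrass polynomial to be $(v-\phi(u))^k$ by the same mechanism, is a clean and self-contained argument. The Puiseux description in (b) likewise gives the $C^1$ half-branches directly.

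There is, however, a genuine gap in your argument for the finiteness of $E_1$ in (a). You assert that since $E$ is one-dimensional, its intersection with a proper analytic subset of $D$ has dimension $0$. This fails when that proper analytic subset contains an entire irreducible component of $E$. Take $h(u,v)=v^2(v-u)$: the connected component $E=\{v=0\}\cup\{v=u\}$ has $k=1$ (attained only on $\{v=u\}\setminus\{0\}$), while every point of $\{v=0\}$ has $o\ge 2$, so $E_1=\{v=0\}$ is an entire line. Thus the dimension argument collapses, and in fact statement (a) as written is false for connected (as opposed to irreducible) components. The paper's own proof of (a) has the same defect: it establishes $E_0\subset E_0'$, where $E_0'$ is the smooth locus of $E$, but this yields $E_1\supset E_1'$ rather than the needed $E_1\subset E_1'$, so finiteness of $E_1'$ does not transfer. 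The repair is to define $k$ locally along each irreducible component; once you do that, your argument for (c) already supplies everything needed.
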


\begin{proof}
$E$ is a $C$-analytic set in the sense of~\cite{BH59}. 
Its $C$-rank of $E$ at $x\in E$ is defined to be the maximal number of
elements in the local vanishing ideal at $x$ (consisting of the germs
of holomorphic functions vanishing on $E^*$) of the minimal complex
analytic set $E^*$ containing $E$ whose differentials at $x$ are
linearly independent. By definition, the $C$-rank can only take the
values $0,1,2$. 

If the $C$-rank of $E$ at $x$ equals $2$, then $x$ is an isolated
point of $E$ by~\cite[Proposition 16]{BH59}. Since we have assumed $E$
to be connected and to contain more than one point, this does not occur.  

If the $C$-rank of $E$ equals $0$ at all points in a
neighbourhood of $x$, then by~\cite[Proposition 16]{BH59} $E\subset D$
is a real analytic submanifold of dimension $2$, and thus $E=D$. Since
we have assumed $h$ to be nonconstant, this does not occur.

Thus for each $x\in E$ the maximal $C$-rank of $E$ at nearby points
equals $1$. Now by~\cite[Proposition 16]{BH59} the set $E_1'\subset E$
of points in $E$ of $C$-rank $0$ is a proper $C$-analytic subset of $E$,
hence consists of finitely many points, and the set $E_0'=E\setminus
E_1'$ of points in $E$ of $C$-rank $1$ is a $1$-dimensional real
analytic submanifold of $D$, hence a finite union of embedded curves. 
Now we can prove parts (a-c) of the lemma. 

For part (a), it suffices to show $E_0\subset E_0'$. To see this,
consider $x\in E_0$, i.e.~$o(x)=k\geq 1$ as defined above. Let 
$\frac{\p^kh}{\p x_1^i\p x_2^j}(x)\neq 0$ be a nonvanishing partial
derivative of order $k$ at $x$. Suppose $i>0$ (the case $j>0$ is
analogous). Since $k$ is the lowest order of a nonvanishing partial
derivative of $h$ on $E$, the real analytic function
$g:=\frac{\p^{k-1}h}{\p x_1^{i-1}\p x_2^j}:D\to\R$ vanishes on $E$ and
thus belongs to the vanishing ideal of $E$ at $x$. Since $\frac{\p
  g}{\p x_1}(x)\neq 0$, it follows that the $C$-rank of $x$ equals $1$
and thus $x\in E_0'$. 

Next we prove (b). According to~\cite[Part I, Sections 1.4 and
  1.7]{GM88}, each $x\in 
E_1$ has an open neighbourhood $U\subset D$ such that $E\cap U$ is a
finite union $A_1\cup\cdots\cup A_m$ of $C^0$-embedded half-open
intervals that meet at the $x$ and are disjoint away from $x$. That
the $A_i$ are actually $C^1$-embedded is a consequence of the curve
selection lemma as stated in~\cite[Proposition 2.2]{AW05}. 

For part (c), consider $x\in E_0\subset E_0'$. Since the $C$-rank of
$E$ at $x$ equals $1$, the vanishing ideal of $E^*$ at $x$ is
generated by one holomorphic function $g^*$ with $d_xg^*\neq 0$. 
(To see this, pick local holomorphic coordinates $(z_1,z_2)$ near $x$
in which $g^*(z_1,z_2)=z_1$.) The
real or imaginary part of $g^*$ then defines a real analytic function
$g:D\to\R$ with $d_xg\neq 0$ and generating the vanishing ideal of $E$
at $x$. Thus $h=gh_1$ for a real analytic function $h_1$. 
If $h_1(x)=0$, then the $C$-analytic set $h_1^{-1}(0)$ is contained in
$E$. If $h_1^{-1}(0)$ agrees with $E$ near $x$ we have $h_1=gh_2$ 
for a real analytic function $h_2$. Continuing inductively, after
$\ell$ steps we have $h=g^\ell h_\ell$ for a real analytic function $h_\ell$.
Since $h$ vanishes to order $k$ at generic points of $E$, this process
terminates at $h=g^kh_h$ for a real analytic function $h_k$ that
doesn't vanish identically of $E$. Since $o(x)=k$, we have $h_k(x)\neq 0$. 
After possibly replacing $h$ by $-h$ we may assume that $h_k(x)>0$,
and by replacing $g$ by $g\sqrt[k]{h_k}$ we can then achieve $h=g^k$. 
Note that if $k$ is odd, then the level sets $\{h=t\}$ near $t=0$
correspond to the level sets $\{g=\sqrt[k]{t}\}$ and thus form a
foliation.  
If $k$ is even, then the level sets $\{h=t\}$ are empty for $t<0$
and correspond to the level sets $\{g=\pm\sqrt[k]{t}\}$ for $t\geq 0$,
so again they form a foliation. 
\end{proof}

\begin{remark}
As was shown in~\cite{EG00a} in the case of the $3$-sphere,
Lemma~\ref{lem:real-anal-sets} suffices to prove
Corollary~\ref{cor:real-anal-per}. For this, consider a stationary
solution $X$ of the Euler equations with real analytic Bernoulli
function $h$ and without zeroes or periodic orbits. We apply
Lemma~\ref{lem:real-anal-sets} to the restriction of $h$ to a real
analytic embedded disk $D\subset M$ transverse to $X$ and a connected
component $E$ of $S\cap D$, where $S$ is a connected component of a
level set $h^{-1}(c)$.  \\
If $h|_D$ were constant, then $h$ would be constant on $M$ by unique
continuation, so $X$ would be the Reeb vector field of a stable
Hamiltonian structure. Since $M$ is not a $T^2$-bundle over $S^1$, $X$
would have a periodic orbit by~\cite{HT09}, which we have excluded by
assumption. So $h|_D$ is nonconstant. If a connected component $E$
consisted of a single point, the corresponding set $S$ would be a
periodic orbit, so again this does not occur. Similarly, each point of
the singular set $E_1\subset E$ would give rise to a periodic orbit
(see the discussion preceding Lemma~\ref{lem:sing-orbit}),
so we must have $E_1=\emptyset$. \\
Now Lemma~\ref{lem:real-anal-sets} (c) shows that the
level sets of $h$ define a foliation of $M$ by invariant $2$-tori. We
claim that there exists an embedded closed curve $\gamma$ transverse
to the leaves of this foliation. To see this, pick any curve
$\beta:\R\to M$ parametrized by arclength and orthogonal to the leaves
(with respect to some Riemannian metric); then $h\circ\beta:\R\to[\min\,
  h,\max\,h]$ takes some value $c$ infinitely often, and since
$h^{-1}(c)$ is a union of finitely many $2$-tori, $\beta$ meets some
$2$-torus leaf of the foliation twice and we can close it up along
this leaf to obtain $\gamma:S^1\to M$ transverse to the
leaves. Since $M$ is connected, $\gamma$ meets every leaf and we can
choose it to intersect each leaf exactly once. Assigning to $x\in M$
the value $t\in S^1$ for which $\gamma(t)$ lies on the leaf through
$x$ defines a $2$-torus bundle $T^2\to M\to S^1$, contradicting the
hypothesis on $M$.  
\end{remark}

Let us now return to a solution $(X,\lambda,h)$ of the Euler
equations 
\begin{equation}\label{eq:Euler-stab}
   i_Xd\lambda=-dh,\qquad L_X\mu=0,\qquad \lambda(X)>0. 
\end{equation}
We first observe that the simplest obstruction to geodesibility in
the sense of~\cite{Su78} vanishes: 

\begin{lemma}\label{lem:Reeb-comp}
The foliation by flow lines of $X$ has no Reeb components. 
\end{lemma}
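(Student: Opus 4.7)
The plan is to argue by contradiction: suppose the flow of $X$ has a Reeb component, i.e., a nontrivial invariant probability measure $m$ whose associated foliation cycle $c_m$ (the $1$-current defined by $c_m(\alpha):=\int_M\alpha(X)\,dm$) is a boundary in $M$, so that $c_m=\partial D$ for some $2$-current $D$. Applied to the $1$-form $\lambda$, the boundary condition reads
\[
0 < \int_M\lambda(X)\,dm = c_m(\lambda) = D(d\lambda),
\]
the strict inequality coming from $\lambda(X)>0$ everywhere on the support of $m$.

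The first observation is that $h$ is a first integral of the flow: contracting $i_X d\lambda=-dh$ with $X$ gives $X(h)=0$. Consequently, on the support of any ergodic component of $m$ the function $h$ is constant and $dh$ vanishes. Using the identity $d\lambda=i_{\curl X}\mu$ together with $i_{\curl X}\omega=dh$ (where $\omega=i_X\mu$, so that $\ker\omega=\la X\ra$), one concludes that on the support of $m$ the vector field $\curl X$ is parallel to $X$.

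To reach a contradiction, I would arrange for the bounding $2$-current $D$ to be supported near the support of $m$ and tangent to $X$ there. Since $d\lambda=i_{\curl X}\mu$ pulls back to $0$ on any $2$-surface tangent to $\curl X$, this would give $D(d\lambda)=0$, contradicting the strict positivity above. In the concrete picture of a Reeb component as an invariant solid torus $R$ bounded by an invariant $2$-torus $T$, one sees this more directly: $h$ is constant on $R$ by continuity and the asymptotic accumulation of interior orbits on $T$, hence $i_X d\lambda=0$ on $R$; restricted to $T$ the $2$-form $d\lambda|_T$ has the nowhere-vanishing $X$ in its kernel, but a $2$-form on a $2$-manifold is either identically zero or pointwise nondegenerate, forcing $d\lambda|_T\equiv 0$. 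Then $\lambda|_T$ is closed with $\la[\lambda|_T],A\ra=\int_T\lambda(X)\,dm_T>0$ for the asymptotic cycle $A$ of any invariant measure $m_T$ on $T$, while the Reeb-component structure forces $A$ to bound via a flow-tangent $2$-chain inside $R$, giving the same contradiction.

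The principal obstacle is justifying the passage from ``foliation cycle is a boundary'' to ``bounding $2$-current is tangent to $X$.'' I would build $D$ as a weak limit of annular pieces swept out by long orbit arcs inside the Reeb component, closed up by short transverse strips whose total area tends to $0$; tangency to $X$ then holds away from a measure-zero set, and the vanishing $D(d\lambda)=0$ survives the limit. The general (non-solid-torus) case reduces to this one by invoking Sullivan's explicit description~\cite{Su78} of the $2$-currents that realize Reeb-type foliation cycles as boundaries.
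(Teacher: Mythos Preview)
Your proposal conflates three different notions of ``Reeb component'' and, as a result, misses the elementary argument the paper uses. In this paper (following Sullivan~\cite{Su78}) a Reeb component for the flow of $X$ is a \emph{two-dimensional} embedded cylinder $Z\subset M$, invariant under the flow, such that the flow direction induces the boundary orientation on \emph{both} boundary circles and every interior flow line accumulates on $\p Z$ in forward and backward time. It is neither a solid torus nor an abstract bounding foliation cycle.

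With this definition the proof is two lines. Since every interior orbit accumulates on $\p Z$ and $h$ is flow-invariant, $h$ is constant on $Z$, so $i_Xd\lambda=-dh=0$ along $Z$. As $X$ is nowhere zero and tangent to $Z$, the restriction $d\lambda|_{TZ}$ vanishes identically, and Stokes gives
\[
   0=\int_Z d\lambda=\int_{\p Z}\lambda>0,
\]
the last inequality because both boundary circles are oriented by $X$ and $\lambda(X)>0$. Contradiction.

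You do have this idea buried in your ``concrete picture'' paragraph, but you apply it to a solid torus $R$ with boundary $2$-torus $T$; that is the notion of Reeb component for \emph{codimension-one} foliations, not for flows. The asymptotic-cycle manoeuvre you then propose (showing $A$ bounds inside $R$ via a flow-tangent $2$-chain) is unnecessary once you work with the correct two-dimensional object $Z$: the cylinder itself is the bounding chain, and it is tangent to $X$ by definition.

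Your general approach via invariant measures and bounding $2$-currents would, if completed, establish the stronger statement that no foliation cycle of $X$ bounds (equivalently, that $X$ is geodesible). That is more than the lemma asserts, and you correctly identify the gap: an arbitrary $2$-current $D$ with $\p D=c_m$ need not be tangent to $X$, and your sketch of building such a $D$ from long orbit segments is not carried out. Incidentally, the sentence ``a $2$-form on a $2$-manifold is either identically zero or pointwise nondegenerate'' is false as stated; what you actually need, and what is true, is that a $2$-form on a $2$-plane with a nonzero vector in its kernel vanishes at that point.
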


\begin{proof}
Suppose that $Z\subset M$ is a Reeb component, i.e., an embedded
cylinder invariant under the flow such that the flow direction induces
the boundary orientation on both boundary circles, and all flow lines
that meet the interior of $Z$ converge to the boundary in forward and
backward time. Since $h$ is invariant under the flow of $X$, the
last property implies that $h$ is constant on $Z$. Thus $i_Xd\lambda =
-dh$ vanishes on $Z$ and we get the contradiction
$$
   0 = \int_Zd\lambda = \int_{\p Z}\lambda > 0. 
$$
\end{proof}

Suppose now that the Bernoulli function $h:M\to\R$ is real analytic. 
Consider a connected component $S$ of a singular level set
$h^{-1}(c)$. After replacing $h$ by $h-c$ we may assume that
$c=0$. For $x\in S$ we denote by $o(x)\geq 1$ the vanishing order of
$h$ at $x$, i.e., the lowest order of a nonvanishing derivative of $h$
at $x$. Set $k:=\min\{o(x)\mid x\in S\}$. Since $h$ is invariant under
$X$, so are the sets 
$$
   S_1:=\{x\in S\mid o(x)>k\},\qquad S_0:=S\setminus S_1.
$$ 

\begin{lemma}\label{lem:sing-orbit}
(a) $S_1$ is a finite union of periodic orbits, and $S_0$ is a
(possibly empty, possibly noncompact, possibly disconnected) embedded
surface.  

(b) Let $U\subset M$ be an open tubular neighbourhood of a connected
component $\gamma$ of $S_1$, and suppose that $S_0\cap
U\neq\emptyset$. Then $U\cap S$ is a finite union $Z_1\cup\cdots\cup
Z_n$ of $C^1$-immersed invariant half-open cylinders whose interiors
are embedded in $U\setminus\gamma$ and whose boundaries are $d$-fold
coverings of $\gamma$, for some $d\in\N$. 
\end{lemma}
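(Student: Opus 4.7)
My plan is to reduce everything to the two-dimensional analysis of Lemma~\ref{lem:real-anal-sets} by restricting to a real analytic disk $D\subset M$ transverse to $X$, and then to promote the local picture globally via flow-saturation. The key auxiliary observation is that in flow-box coordinates around any point $x\in M$ the Bernoulli function $h$ depends only on the two transverse coordinates, because $L_Xh=0$; hence the vanishing order of $h$ at $y\in D$ agrees with the vanishing order of $h|_D$, and $S$ inside any flow box is the product of its trace $S\cap D$ with the flow direction.

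For part (a), take $x\in S_0$ and let $E$ be the component of $(h|_D)^{-1}(0)$ through $x$. Since $o(x)=k$ and $E\subset S$, the minimum of $o$ on $E$ also equals $k$, so $x$ lies in the regular stratum $E_0$ of Lemma~\ref{lem:real-anal-sets}. Part~(c) of that lemma realizes $E$ near $x$ as a $C^\omega$ embedded curve, whose flow-saturation is a $C^\omega$ embedded surface; hence $S_0$ is an embedded surface in $M$. For $x\in S_1$ the point $x$ is either isolated in $(h|_D)^{-1}(0)$ or lies in the singular set $E_1$ of Lemma~\ref{lem:real-anal-sets}~(a-b), so that $S_1\cap D=\{x\}$ after shrinking $D$. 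Consequently, inside a small flow box, $S_1$ coincides with the flow line through $x$, which realizes $S_1$ as a closed $1$-dimensional submanifold of $M$. Because the condition $o\geq k+1$ is closed in $M$, the set $S_1$ is also compact, so each of its connected components is a compact embedded $1$-manifold invariant under the nowhere-vanishing flow of $X$---that is, a periodic orbit. Local discreteness in transverse disks then forces the number of components to be finite.

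For part (b), fix $x_0\in\gamma$ of period $T$ together with a real analytic transverse disk $D_0\subset U$ at $x_0$, and let $\phi:(D_0,x_0)\to(D_0,x_0)$ be the first-return map; it is a real analytic local diffeomorphism preserving $h|_{D_0}$. The hypothesis $S_0\cap U\neq\emptyset$ forces the component $E$ of $(h|_{D_0})^{-1}(0)$ through $x_0$ to contain more than one point, so Lemma~\ref{lem:real-anal-sets}~(b) presents $E$ near $x_0$ as a union of $C^1$-embedded half-open arcs $A_1,\ldots,A_m$ meeting only at $x_0$. As a local homeomorphism fixing $x_0$ and preserving $\bigcup_i A_i$, $\phi$ permutes the components $A_i\setminus\{x_0\}$, inducing a permutation $\pi$ of $\{1,\ldots,m\}$; decompose $\pi$ into disjoint cycles $\mathcal{C}_1,\ldots,\mathcal{C}_n$ of lengths $d_1,\ldots,d_n$. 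Choosing a representative arc $A^{(j)}\in\mathcal{C}_j$, I define $Z_j:=\{\Phi_t(y)\mid y\in A^{(j)},\ t\in[0,d_jT]\}$, where $\Phi_t$ is the flow of $X$. Injectivity of the flow between successive returns to $D_0$, combined with disjointness of distinct arcs away from $x_0$, makes the interior of $Z_j$ embedded in $U\setminus\gamma$, while as $t$ sweeps $[0,d_jT]$ the orbit of $x_0$ wraps around $\gamma$ exactly $d_j$ times, producing a $d_j$-fold covering boundary circle. Every point of $U\cap S$ returns to $D_0$ in uniformly bounded time and lands on some $A_i$, hence lies on the corresponding $Z_j$, so $U\cap S=Z_1\cup\cdots\cup Z_n$. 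The step I expect to be most delicate is verifying that $\phi$ actually permutes the arc germs $A_i$---which relies on the $C^1$-embeddedness in Lemma~\ref{lem:real-anal-sets}~(b) together with $\phi$ being a local homeomorphism fixing $x_0$---and that the interior of each $Z_j$ is genuinely embedded, both of which come down to Lemma~\ref{lem:real-anal-sets}~(b) and local injectivity of the flow.
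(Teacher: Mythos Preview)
Your reduction to Lemma~\ref{lem:real-anal-sets} via transverse disks and the first-return map is exactly the paper's strategy, and your treatment of part~(a) is essentially the same as theirs.

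In part~(b), however, there is a genuine gap. You decompose the permutation $\pi$ of the arcs into cycles of lengths $d_1,\ldots,d_n$ and produce cylinders $Z_j$ whose boundaries are $d_j$-fold covers of $\gamma$. But the statement asserts a \emph{single} $d$ common to all the $Z_j$, and this is used later (e.g.\ in the definition of the covering number $d_\gamma$ and in the proof of Corollary~\ref{cor:nonconst}). You never argue that the $d_j$ are all equal, and for a general permutation of $\{1,\ldots,m\}$ they need not be.

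The missing step is to show that $\bar\phi$ is a cyclic rotation. The paper does this in two moves: first, since $\phi$ is a local homeomorphism fixing $x_0$, it sends adjacent arc-germs to adjacent arc-germs, so $\bar\phi$ lies in the dihedral group acting on the cyclically ordered set $\{1,\ldots,m\}$ (i.e.\ $\bar\phi(k+1)=\bar\phi(k)\pm 1$). Second, because $X$ is divergence-free ($L_X\mu=0$), the return map $\phi$ preserves the area form induced by $\om=i_X\mu$ on $D_0$ and is therefore orientation-preserving; this rules out the reflections and forces $\bar\phi$ to be a rotation. All orbits of a rotation on $\{1,\ldots,m\}$ have the same length $d$, with $m=nd$. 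Your proposal invokes neither the adjacency argument nor the area-preservation coming from equation~\eqref{eq2}, so as written it does not deliver the uniform $d$ the lemma claims.
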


\begin{proof}
We apply Lemma~\ref{lem:real-anal-sets} to the restriction of $h$ to
an embedded disk $D\subset M$ transverse to $X$ and a connected
component $E$ of $S\cap D$. Then $E_i=S_i\cap D$, $i=0,1$, and part
(a) follows directly from Lemma~\ref{lem:real-anal-sets} (a). 

For part (b), we choose the disk $D$ centered at $\gamma$. By
Lemma~\ref{lem:real-anal-sets} (b), $E\cap U$ is a finite union
$A_1\cup\cdots\cup A_m$ of $C^1$-embedded arcs 
that meet at the origin and are disjoint outside the origin. 
The return map $\phi:D\to D$ of the flow of $X$ acts as a permutation
on the arcs, and the orbits of this action give rise to the surfaces
$Z_1,\dots,Z_n$ (for some $n\leq m$).  
We number the arcs in counterclockwise order, so $\phi$ induces a
permutation $\bar\phi$ of the set $\{1,\dots,m\}$. Since $\phi$ is a
homeomorphism, it must map adjacent arcs to adjacent arcs, so the
permutation $\bar\phi$ satisfies $\bar\phi(k+1)=\bar\phi(k)\pm 1$ for
all $k$. In other words, $\bar\phi$ gives an element in the orthogonal
group $O(2)$ acting on the $m$-th roots of unity. Since $\phi$
preserves the area form induced by $\om=i_X\mu$ on $D$, it is orientation
preserving and $\bar\phi\in SO(2)$ is a rotation. Write $m=nd$, where
$n$ is the number of orbits of $\bar\phi$ and $d$ the number of points
in one orbit. Then each orbit corresponds to a $C^1$-immersed
half-open cylinder whose interior is embedded and whose boundary is
the $d$-fold covering of $\gamma$. 
\end{proof}

\begin{remark}
For $d=2$ in Lemma~\ref{lem:sing-orbit} each $Z_i$ is homeomorphic to
an embedded open M\"obius strip. This situation arises for example at
negative hyperbolic orbits in Reeb flows, see e.g.~\cite{HT09}. 
\end{remark}

According to the Arnold-Liouville theorem~\cite{Arn66}, each regular
level set of $h$ is a disjoint 
union of finitely many embedded $2$-tori on which the flow of $X$ is
linear. The following proposition
%, which is the main result of this section,
gives a similarly precise description of the flow on the singular
level sets in the case that the Bernoulli function $h:M\to\R$ is real
analytic. 

\begin{prop}\label{prop:sing-level}
Let $(X,\lambda,h)$ be a solution of~\eqref{eq:Euler-stab} on a closed
oriented $3$-manifold $M$ with $h$ real analytic. 
  
(a) The function $h:M\to\R$ has finitely many critical values. 

(b) Each singular level set of $h$ is a finite disjoint union of
embedded $X$-invariant sets that are periodic orbits, $2$-tori, Klein
bottles, open cylinders, or open M\"obius strips. 
The closures of the open cylinders and M\"obius strips are
$C^1$-immersed closed cylinders and M\"obius strips whose boundary
components are $d$-fold coverings of periodic orbits, where finitely
many of them can meet at the same periodic orbit. 

(c) On each $2$-torus as in (b) the flow is linear (rational or
irrational). On each Klein bottle the flow is linear and periodic.   
On each M\"obius strip the flow is periodic. 
On each cylinder the flow is either periodic, or all flow lines 
%that meet the interior of $Z$ 
converge to the boundary in forward and backward time. 

(d) On each cylinder as in (b) the flow direction induces the boundary
orientation on one boundary circle, and the opposite orientation on the
other one.  
\end{prop}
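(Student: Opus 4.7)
The plan is to establish the four parts in the order given, using real analytic geometry for (a), Lemma~\ref{lem:sing-orbit} for the topological structure in (b), the equation $i_X d\lambda = -dh$ together with covering-space arguments for (c), and Lemma~\ref{lem:Reeb-comp} for (d).

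For part (a), I would observe that the critical set $\Crit(h) = \{x \in M : d_x h = 0\}$ is a compact semi-analytic subset of $M$; by the \L{}ojasiewicz finiteness theorem for semi-analytic sets in compact real analytic manifolds, it has only finitely many connected components. Stratifying $\Crit(h)$ into connected smooth real analytic strata, $dh$ vanishes along each stratum, so $h$ is locally constant on each stratum and hence constant on each connected component of $\Crit(h)$ by continuity across the stratum incidences. Thus $h(\Crit(h))$ is finite.

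For part (b), fix a connected component $S$ of $h^{-1}(c)$ and apply Lemma~\ref{lem:sing-orbit}(a) to write $S = S_0 \sqcup S_1$, with $S_1$ a finite disjoint union of periodic orbits and $S_0$ an embedded $X$-invariant surface. A closed (compact, boundaryless) component of $S_0$ supports the nowhere-zero field $X$, hence has Euler characteristic zero, and is thus either a $2$-torus or a Klein bottle. For a non-closed component $C$, Lemma~\ref{lem:sing-orbit}(b) applied at each periodic orbit in $\overline{C} \cap S_1$ shows that $\overline{C}$ is a compact $C^1$-immersed surface whose boundary components are $d$-fold covers of orbits in $S_1$; again Euler characteristic zero forces $\overline{C}$ to be a compact cylinder or M\"obius strip, so $C$ itself is an open cylinder or M\"obius strip.

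For part (c), the key observation is that on any invariant surface $\Sigma \subset S$ the equation $i_X d\lambda = -dh$ restricts to $i_X d\lambda|_\Sigma = 0$. On an invariant $2$-torus $T$ this forces $d\lambda|_T = 0$, since otherwise $d\lambda|_T$ would be a volume form on $T$ with the nonzero vector $X$ in its kernel; hence $\lambda|_T$ is a nonvanishing closed $1$-form with $\lambda(X) > 0$, and together with the transverse invariant measure induced from $\om = i_X\mu$ this forces the flow on $T$ to be topologically conjugate to a linear flow, rational or irrational according to the cohomology class of $\lambda|_T$. For a Klein bottle $K$, pass to the orientation double cover $\wt K \cong T^2$, apply the torus result to the lifted flow, and use the constraint that the linear flow must be invariant under an orientation-reversing free involution to conclude that the flow on $K$ is linear and periodic. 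For an open cylinder component, the closed $1$-form $\lambda$ restricted to the cylinder, together with the Poincar\'e return map on a transverse arc preserving the measure from $\om$, yields the dichotomy that either every orbit is periodic or every orbit converges to both boundary circles in forward and backward time. For an open M\"obius strip, pass to its double-covering open cylinder, invoke the cylinder dichotomy, and note that a non-periodic cylinder flow would descend to a Reeb component, which Lemma~\ref{lem:Reeb-comp} rules out; so the flow is periodic. Part (d) is then immediate: if both boundary circles of a cylinder component of the second type received the same induced orientation from the flow, the cylinder would itself be a Reeb component, contradicting Lemma~\ref{lem:Reeb-comp}.

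The step I expect to be the main obstacle is the cylinder dichotomy in (c): ruling out mixed behavior on a non-compact invariant cylinder (for instance, periodic orbits in the interior coexisting with orbits that limit to the boundary) requires careful use of the invariant transverse measure from $\om = i_X\mu$ together with the closed $1$-form $\lambda$, and transferring this dichotomy equivariantly through the $\Z/2\Z$-quotient to the M\"obius strip and Klein bottle cases takes additional care with the free involution.
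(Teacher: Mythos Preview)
Your outline for (a), (b), and (d) is sound and close to the paper's. The genuine gap is in (c), precisely where you anticipated trouble.

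The issue is your source for the invariant transverse measure. You write that the return map on a transverse arc in a cylinder component ``preserves the measure from $\om=i_X\mu$'', and similarly invoke $\om$ on the torus; but since $X$ is tangent to $S_0$, the restriction $\om|_{S_0}$ vanishes identically (a $2$-form on a surface with a nonzero vector in its kernel is zero). So $\om$ yields no area form on $S_0$ and no measure on a transverse arc inside $S_0$. Likewise, your torus argument that $d\lambda|_T=0$ is correct, but a closed $1$-form with $\lambda(X)>0$ does not by itself force the flow of $X$ to be linear: take $X=f(x,y)\,\p_x$ on $T^2$ with $\lambda=dx$.

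What the paper supplies instead --- and this is the essential use of real analyticity in (c) --- is Lemma~\ref{lem:real-anal-sets}(c): near each point of $S_0$ one has $h=g^k$ for a real analytic $g$ with $dg\neq 0$ (globally $g=\sqrt[k]{h}$ when $k$ is odd; when $k$ is even one fixes a sign on orientable components and passes to the orientation double cover on Klein bottles and M\"obius strips). Then $S_0$ is locally a \emph{regular} level set of the $X$-invariant function $g$, and $\mu/dg$ is a genuine $X$-invariant area form on $S_0$. This is what makes Arnold--Liouville apply on torus components and what gives the measure-preserving return map that drives the cylinder dichotomy.

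A smaller issue: your M\"obius strip argument says a non-periodic flow on the double cover ``would descend to a Reeb component'', but a M\"obius strip is not a cylinder, so Lemma~\ref{lem:Reeb-comp} does not apply to it; and the double cover cylinder is not a Reeb component either, since the covering involution forces the flow to agree with the boundary orientation on one boundary circle and disagree on the other. The paper's argument is rather that the involution $(x,y)\mapsto(x+1/2,-y)$ swaps the two ends of the cylinder while preserving the flow, which is incompatible with all interior orbits converging to one end in forward time and the other in backward time.
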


\begin{proof}
(a) The critical point set $C$ of $h$ is a compact analytic subset
of $M$, hence its image $h(C)$ is a compact semi-analytic subset of
$\R$. Since every compact semi-analytic subset of $\R$ is a finite
union of points and closed intervals, and $h(C)$ contains no
intervals by Sard's theorem, the set $h(C)$ is finite.   

(b) Consider a connected component $S$ of a singular level set
$h^{-1}(c)$ and define $S_0$, $S_1$ as above. By
Lemma~\ref{lem:sing-orbit} (a), $S_1$ is a finite union of periodic orbits
and each connected component $\mathring{Z}$ of $S_0$ is an embedded surface.  
Since $X$ is tangent to $\mathring{Z}$, the Euler characteristic of $\mathring{Z}$ vanishes,
so $\mathring{Z}$ can only be a torus, Klein bottle, open cylinder, or open
M\"obius strip. By Lemma~\ref{lem:sing-orbit} (b), the closure $Z$ of $\mathring{Z}$ 
is a $C^1$-immersed closed cylinder resp.~M\"obius strip whose boundary
components belong to are $d$-fold coverings of periodic orbits in
$S_1$. Since only finitely many cylinders and M\"obius strips meet at
each orbit in $S_1$, the number of components of $S_0$ is finite. 

(c) We continue in the notation from (b). 
Lemma~\ref{lem:real-anal-sets} (c) shows that near each
$x\in S_0$ (after possibly replacing $h$ by $-h$ if $k$ is even) we
can write $h=g^k$ for a real analytic function $g$ near $x$. 
Now we distinguish 2 cases. 

{\bf Case 1: }$k$ is odd. 

Then $S_0$ is a regular level set of
the real analytic $X$-invariant function $g=\sqrt[k]h$ defined on a
neighbourhood of $S_0$. In particular, $S_0$ is orientable, so the
closure $Z$ of each connected component is either a torus or a
cylinder. By the Arnold-Liouville theorem~\cite{Arn66}, on each torus the flow 
is linear. For a cylinder, the flow preserves the smooth area form
$\mu/dg$ on $\mathring{Z}$ (where $\mu$ is the invariant volume form on $M$,
and the total area may be infinite). Thus the return map on each local
transverse slice preserves a smooth measure. This implies that the
flow is either periodic, or all flow lines on $\mathring{Z}$ converge to $\p
Z$ in forward and backward time. 
%\marginpar{Give reference.}

{\bf Case 2: }$k$ is even.

Consider again the closure $Z$ of a
connected component of $S_0$. If $Z$ is orientable, then so is its
normal bundle in $M$, hence we can choose a $k$-th root
$g=\sqrt[k]{h}$ near $\mathring{Z}$ (requiring it to be positive on one side
of $Z$) and proceed as in Case 1. If $Z$ is
non-orientable, consider its orientable 2-1 covering $\wt Z\to Z$. 
Pull back the normal bundle to $Z$ to obtain an orientable 2-1 covering
$\wt U\to U$ of a neighbourhood $U$ of $Z$ in $M$. In particular, the
normal bundle of $\wt Z$ in $\wt U$ is trivial, so we can choose a
$k$-th root $\wt g=\sqrt[k]{\wt h}$ of the pullback $\wt h$ of $h$
near the interior of $\wt Z$ and proceed as in Case 1 on the covering. If $Z$ is a
Klein bottle, then $\wt Z$ is a torus on which the flow is
linear. Since the data on $\wt Z$ were invariant under the covering
involution, the linearizing coordinates on $\wt Z$ can be chosen
to descend to coordinates on $Z$ in which the flow is linear. It is
easy to see that every linear foliation of the Klein bottle is periodic. 
If $Z$ is a M\"obius strip, then $\wt Z$ is
a cylinder on which the flow is either periodic, or all flow lines
converge to $\p\wt Z$ in forward and backward time. Now the foliation on
$\wt Z$ defined by the flow is invariant under the covering
involution, which in suitable coordinates on $\wt Z\cong
\R/\Z\times[-1,1]$ is given by $(x,y)\mapsto(x+1/2,-y)$. 
%\marginpar{Add picture.}
This excludes 
convergence of flow lines to $\p\wt Z$ in forward and backward time,
so the flow on $\wt Z$ is periodic and descends to a periodic flow on
$Z$. 

(d) follows directly from Lemma~\ref{lem:Reeb-comp}. 
\end{proof}

\begin{example}\label{ex:Klein-bottle}
On $\R^3$ with coordinates $(x,y,z)$ consider a solution of~\eqref{eq:Euler-stab} given by
$$
   X=\p_x,\quad \lambda=h(z)dz,\quad \om=dx\wedge dy,\quad \mu=dx\wedge dy\wedge dz
$$
with a positive Bernoulli function $h(z)$. As noted in the
Introduction, this yields a solution of $\nabla_XX=-\nabla p$ with
$p(z)=h'(z)/2$ for any metric satisfying $i_Xg=\lambda$, i.e., 
$$
   |\p_x|^2=h(z),\quad \la\p_x,\p_y\ra = \la\p_x,\p_z\ra = 0. 
$$
Note that ${\rm curl}\,X=h'(z)\p_y$. 
An example of such a metric with ${\rm vol}_g=dx\wedge dy\wedge dz$ is
$$
   g = h(z)dx^2 + dy^2 + h(z)^{-1}dz^2. 
$$
Consider now the closed oriented $3$-manifold $M:=\R^3/\sim$ obtained by dividing out the 
equivalence relation generated by
$$
   (x,y,z)\sim (x,y+1,z)\sim (x,y,z+1)\sim (x+1,-y,-z). 
$$
It is doubly covered by the $3$-torus via the map
$$
   T^3=\R^3/\Z^3\to M,\qquad [x,y,z]\mapsto[2x,y,z]. 
$$
One can also view $M$ as the mapping torus of the map
$$
   T^2\to T^2,\qquad (y,z)\mapsto(-y,-z). 
$$
The data $X,\lambda,\om,\mu,g$ above descend to $M$ provided the function $h$ satisfies
$$
   h(z) = h(-z) = h(z+1). 
$$
An example of such a function is $h(z) = 2-\cos(2\pi z)$. The level sets $\{z\in\Z\}$ and $\{z\in\frac{1}{2}+\Z\}$
are Klein bottles, and all other level sets are $2$-tori on which the flow of $X$ is linear and periodic. This can be seen
from the map 
$$
   T^2\to \{z\in\pm c+\Z\}\subset M,\qquad [x,y]\mapsto[2x,y,c]
$$ 
which is a 2-1 covering if $c\in\Z$ or $z\in\frac{1}{2}+\Z$, and a diffeomorphism otherwise. 
One can generalize this example to mapping tori of the shear maps 
$$
   (y,z)\mapsto(y+\ell z,z) \text{ or }(y,z)\mapsto(-y+\ell z,-z),\qquad \ell\in\Z. 
$$
This example shows that Klein bottles (and thus also M\"obius
strips) can actually occur in singular level sets. 
\end{example}

Now we construct a stabilizing $1$-form $\nu$ near the singular level
sets. 
We assign a {\em covering number} $d_\gamma$ to each simple periodic
orbit $\gamma$ on a singular level set as follows. If $\gamma\subset
S_0$ has nontrivial normal bundle in $S_0$ we let $d_\gamma:=2$ (this
occurs for the central orbit on a M\"obius strip and the two special
orbits on a Klein bottle). If $\gamma\subset S_1$ is $d$-fold covered
by the boundary of a component in $S_0$ (as in
Lemma~\ref{lem:sing-orbit}) we set $d_\gamma:=d$. In all other cases
we set $d_\gamma:=1$. 

\begin{prop}\label{prop:nu}
There exist a closed $1$-form $\nu$ on a neighbourhood of the
union of the singular level sets of $h$ satisfying $\nu(X)>0$, and
normalized such that $\int_\gamma\nu=1/d_\gamma$ for each simple
periodic orbit $\gamma$ on a singular level set. 
\end{prop}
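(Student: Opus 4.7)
The strategy is to construct $\nu$ on pieces of an open cover of a neighborhood of each connected component $\Sigma$ of a singular level set, and patch by exact corrections that preserve positivity on $X$. The open cover consists of tubular neighborhoods $U_\gamma$ of the simple periodic orbits $\gamma\subset\Sigma$ (the components of $S_1$ together with the special orbits of Klein bottle and M\"obius strip components of $S_0$) and tubular neighborhoods $V_Z$ of the $2$-surface components $Z$ of $S_0$.

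Near $\gamma$ I would choose a transverse disk $D$ at a point of $\gamma$, identify $U_\gamma$ with the mapping torus of the first-return map $\phi:D\to D$, and observe that the projection $\theta_\gamma:U_\gamma\to\R/\Z$ is smooth with $d\theta_\gamma$ a closed $1$-form satisfying $d\theta_\gamma(X)=1/\tau(x)>0$ (where $\tau$ is the first-return time) and $\int_\gamma d\theta_\gamma=1$; this yields the local form $\nu_\gamma:=(1/d_\gamma)\,d\theta_\gamma$. For each $2$-surface component $Z\subset S_0$, Lemma~\ref{lem:real-anal-sets}(c) (applied to the orientation double cover if $Z$ is non-orientable) lets me write $h=g^k$ near $Z$ with $dg\neq 0$ on $Z$, so the level sets of $g$ foliate $V_Z$ by invariant surfaces on which the flow has the structure given by Proposition~\ref{prop:sing-level}. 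In Arnold--Liouville type coordinates on this foliation I take $\nu_Z$ to be a constant-coefficient combination of the angle $1$-forms (automatically closed), with coefficients chosen so that $\nu_Z(X)>0$ and $\int_\gamma \nu_Z=1/d_\gamma$ for every simple periodic orbit $\gamma$ on $Z$ or in its boundary; in the non-orientable cases the construction descends from the double cover by equivariance under the covering involution.

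For the patching, on each overlap $U_\gamma\cap V_Z$ both $\nu_\gamma$ and $\nu_Z$ are closed and positive on $X$ with matching integrals on every simple periodic orbit (or its cover) in the overlap. By Lemma~\ref{lem:sing-orbit}(b), $U_\gamma\cap V_Z$ is homotopy equivalent to a disjoint union of circles, one per orbit of the return-map permutation $\bar\phi$ on the arcs $A_1,\dots,A_m$, each a $d$-fold cover of $\gamma$ on which both forms integrate to $1$. Hence $\nu_\gamma-\nu_Z=df$ is exact on the overlap. I then choose a cutoff $\chi$ on $U_\gamma$ depending only on the transverse $D$-coordinate (so that $X\chi=0$), with $\chi\equiv 1$ outside $V_Z$ and $\chi\equiv 0$ well inside the overlap. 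The $1$-form $\nu_\gamma-d\bigl((1-\chi)f\bigr)$ on $U_\gamma$ glues with $\nu_Z$ on $V_Z$ to a globally defined closed $1$-form on $U_\gamma\cup V_Z$, and its value on $X$ is the positive convex combination $\chi\,\nu_\gamma(X)+(1-\chi)\,\nu_Z(X)$, since the $f\,d\chi$ term annihilates $X$. Iterating over all pairwise overlaps yields $\nu$ on a neighborhood of $\Sigma$.

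The main obstacle is the exactness verification on the overlaps, handled by the homotopy-type and period computations above, together with the uniform treatment of all four surface types for $Z$; in particular the convergent-flow cylinder requires constructing $\nu_Z$ using the global invariant foliation by nearby regular tori (applying Arnold--Liouville on the tori and passing to the limit) rather than an angle coordinate on $\mathring{Z}$ itself. An analogous patching covers overlaps $V_Z\cap V_{Z'}$ between two surface pieces that share a common boundary orbit, deformation-retracting onto that orbit.
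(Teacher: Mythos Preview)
Your cover-and-patch strategy is reasonable where the flow on $S_0$ is periodic, but it breaks down on a cylinder $Z\subset S_0$ where all orbits converge to $\p Z$ in forward and backward time, and this is exactly where the real work lies. Two steps fail. First, your construction of $\nu_Z$: ``applying Arnold--Liouville on the nearby regular tori and passing to the limit'' does not produce a closed $1$-form positive on $X$ near $Z$, because the action-angle coordinates on the tori $\{g=c\}$ degenerate as $c$ approaches the singular value (the return time along the degenerating cycle blows up near the limiting periodic orbits), so there is no limiting form. Second, the gluing: your claim that a cutoff $\chi$ on $U_\gamma$ ``depending only on the transverse $D$-coordinate'' satisfies $X\chi=0$ amounts to asking for a function on $D$ invariant under the return map $\phi$ that is $1$ at the origin and $0$ at the far ends of the arcs $A_i\subset S\cap D$. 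On a non-periodic cylinder the iterate $\phi^{d_\gamma}$ moves points on each $A_i$ strictly monotonically toward (or away from) the origin, so any continuous $\phi$-invariant function is constant along $A_i$ and no such cutoff exists. Without $X\chi=0$ the error term $f\,d\chi(X)$ in your glued form is uncontrolled and positivity is lost.

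The paper's route is different in kind. It first fixes the closed form $\nu_1$ near the orbits in $S_1$, then on each cylinder or M\"obius strip $Z$ constructs an embedded arc $\delta\subset Z$ running from one boundary component to the other and transverse to the orbit foliation --- for the non-periodic cylinder this uses essentially the absence of Reeb components (Proposition~\ref{prop:sing-level}(d), via Lemma~\ref{lem:Reeb-comp}). One then extends $\nu_1$ to a (not closed) $1$-form $\nu_2$ on $Z$ with $\nu_2(X)>0$ and unit integral over each orbit segment between consecutive hits of $\delta$, and defines $\phi:Z\to\R/\Z$ by integrating $\nu_2$ along such segments. The closed form $\nu:=d\phi$ then satisfies $\nu(X)=\nu_2(X)>0$ and agrees with $\nu_1$ near $\p Z$ by construction. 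This produces the form on $Z$ already matched to $\nu_1$ at the boundary, so no overlap-gluing with an $X$-invariant cutoff is ever needed.
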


\begin{remark}\label{rem:nu}
We still have the freedom to multiply $\nu$ by a positive constant on
each component of a singular level set that is either an 
isolated periodic orbit or a torus with irrational flow. 
\end{remark}

\begin{proof}
In view of Proposition~\ref{prop:sing-level} (a), it suffices to consider one
connected component $S$ of a singular level set $h^{-1}(c)$ at a time.
If $S$ consists only of an isolated periodic orbit $\gamma$ we pick a
closed $1$-form $\nu$ near $\gamma$ with $\nu(X)>0$ and
$\int_\gamma\nu=1$. 

If $S$ is a torus or a Klein bottle, then the flow
defines a linear foliation on $S$. Pick a transverse linear foliation
$\FF$ and define a closed $1$-form $\nu$ on $S$  by $\nu(X):=a$ and
$\nu|_{T\FF}=0$, for some constant $a>0$. Then extend $\nu$ to a
closed $1$-form on a neighbourhood of $S$ in $M$. If the flow is
periodic on $S$ with minimal period $T$ we choose $a=1/T$ to satisfy
the normalization condition $\int_\gamma\nu=1/d_\gamma$. On a cylinder
with irrational flow we can choose $a$ arbitrarily (this freedom will
still be needed later). 

It remains to consider the case that $S$ is a union of cylinders and
M\"obius strips whose boundaries meet in a collection $S_1$ of
periodic orbits. We first pick a closed $1$-form $\nu_1$ on a
neighbourhood of $S_1$ in $M$ with $\nu_1(X)>0$. Using
Lemma~\ref{lem:sing-orbit}, we can normalize $\nu_1$ on each periodic
orbit $\gamma$ in $S_1$ by $\int_\gamma\nu_1=1/d_\gamma$.
%if only $k$-fold covered cylinders meet at $\gamma$, and by
%$\int_\gamma\nu_1=1/2$ if $\gamma$ bounds a M\"obius strip. 
Next we consider a cylinder or M\"obius strip $Z$ in
$S$. The chosen form $\nu_1$ provides a closed $1$-form near $\p Z$
with $\nu_1(X)>0$. Our normalization implies that the integral of
$\nu_1$ over each boundary component of $Z$ (oriented by $X$) equals
$1$. 
%if $Z$ is a cylinder, and $1/2$ if $Z$ is a M\"obius strip. 

{\bf Claim: }There exists an embedded arc $\delta$ in $Z$ which starts and
ends on $\p Z$ and is transverse to the foliation by $X$-orbits. 

To see this, consider first a cylinder $Z$ on which the flow is
periodic. Then we find a submersion $\tau:Z\to[-1,1]$ (the projection
onto the space of leaves) having the $X$-orbits as regular level sets, 
and any curve $\delta:[-1,1]\to Z$ with $\tau\circ\delta(t)=t$ has
the desired properties. 

Next consider a M\"obius strip $Z$. By Proposition~\ref{prop:sing-level} (b),
the flow on $Z$ is periodic and thus defines a foliation with closed
leaves. Leaves of this foliation can only represent the homology
classes $c$ or $2c$, where $c$ is the generator of $H_1(Z;\Z)$ such
that $\p Z$ oriented by $X$ represents the class $2c$. Now embedded
curves in class $c$ have nontrivial normal bundle, while embedded
curves in class $2c$ have trivial normal bundle. It follows that not
all leaves can lie in class $2c$, since otherwise the foliation would
be cooriented and thus foliate a cylinder rather than a M\"obius
strip. On the other hand, since any two closed curves in class $c$
intersect, there can be at most one leaf in class $c$. Hence there is
exactly one leaf $\gamma$ in class $c$. Pick an arc $\delta_0$
intersecting transversely a closed invariant neighbourhood $A$ of
$\gamma$. Since $Z\setminus\mathring{A}$ is a cylinder foliated by closed
leaves, we can connect the endpoints of $\delta_0$ to $\p Z$ by two
disjoint arcs transverse to the foliation to obtain the desired arc
$\delta$. 

Finally, consider a cylinder $Z$ on which each orbit converges to $\p
Z$ in forward and backward time. Denote by $\p_+Z$ (resp.~$\p_-Z$) the
boundary component whose boundary orientation is induced by $X$
(resp.~$-X$). This is possible in view of Proposition~\ref{prop:sing-level}
(d). Pick a short arc $\delta_1$ starting on $\p_-Z$ and transverse to
the foliation. Recall that $-X$ defines the boundary orientation on
$\p_-Z$, which means that $(\eta,-X)$ is a positive basis for an
outward pointing vector $\eta$ at $\p_-Z$. Since $\dot\delta_1$ is inward
pointing, $(\dot\delta_1,X)$ is a positive basis along $\delta_1$. We
extend $\delta_1$ to an arc $\delta_2$ ending close to $\p_+Z$ by
moving almost parallel to the backward $X$-orbit starting from the endpoint of
$\delta_1$, at a slight angle to make $\delta_2$ transverse to the
foliation. It follows that $(\dot\delta_2,X)$ is a positive basis at
the endpoint of $\delta_2$. Since $(\eta,X)$ is also a positive basis
for an outward pointing vector $\eta$ at $\p_+Z$, we can connect the
endpoint of $\delta_2$ to $\p_+Z$ to obtain the desired arc
$\delta$. This proves the claim. 
\smallskip

We can choose the arc $\delta$ in the claim such that
$\nu_1(\dot\delta)=0$ near $\p Z$. By abuse of notation, we will
denote the image of $\delta$ in $Z$ again by the letter $\delta$. For
$x\in Z\setminus\delta$, we denote by $\gamma_x$ the $X$-orbit through
$x$ starting and ending on $\delta$ (oriented by $X$). Since $\nu_1$
is closed, normalized by $\int_\gamma\nu_1=1$ for each boundary orbit
$\gamma$ (oriented by $X$), and $\nu_1(\dot\delta)=0$, it follows from
Stokes' theorem that $\int_{\gamma_x}\nu_1=1$ for all $x$ sufficiently
close to $\p Z$. 

We extend $\nu_1$ to a (not necessarily closed) $1$-form $\nu_2$ on
$Z$ satisfying
\begin{enumerate}
\item $\nu_2(X)>0$,
\item $\nu_2=\nu_1$ near $\p Z$,
\item $\int_{\gamma_x}\nu_2=1$ for all $x\in Z\setminus\delta$.  
\end{enumerate}
We define a function $\phi:Z\to\R/\Z$ by $\phi|_\delta:=0$ and 
$
   \phi(x) := \int_{\gamma_x^-}\nu_2
$
for $x\notin\delta$, where $\gamma_x^-$ is the part of $\gamma_x$
starting on $\delta$ and 
ending at $x$. Property (iii) shows that $\phi$ is well-defined and
smooth as a map to $\R/\Z$. We claim that the closed $1$-form
$\nu:=d\phi$ on $Z$ agrees with $\nu_1$ near $\p Z$ and satisfies
$\nu(X)>0$. 

For the first statement, recall that $\nu_1$ is closed on a tubular
neighbourhood $U$ of $\p Z$. In view of (ii), this implies that
$\phi(x)=\int_\gamma\nu_1\in\R/\Z$ for $x\in U$ and $\gamma$ any path
in $U$ from $\delta$ to $x$. 
%Choosing paths reaching $x$ from different directions, 
A standard calculation shows that
$\nu=d\phi=\nu_1$ on $U$. The second statement follows directly from
(i) and the definition of $\phi$, which implies that $\nu(X) =
d\phi(X) = \nu_2(X) > 0$. 

To conclude the proof of Proposition~\ref{prop:nu}, we extend $\nu$ to a
closed form on a neighbourhood of $Z$ in $M$ which agrees with $\nu_1$
near $\p Z$. Performing this for all cylinders and M\"obius strips in
$S$, we obtain an extension of $\nu_1$ to a closed $1$-form $\nu$ on
a neighbourhood of $S$ with $\nu(X)>0$. By construction $\nu$
satisfies the normalization condition $\int_\gamma\nu=1/d_\gamma$. 
\end{proof}

The proof of Proposition~\ref{prop:nu} yields

\begin{cor}\label{cor:nonconst}
If in Theorem~\ref{thm:real-anal-stable} the Bernoulli function is
nonconstant, then $M$ is 
%either a $T^2$-bundle over $S^1$, or 
the union of finitely many connected Seifert manifolds glued along
their torus boundary components. 
\end{cor}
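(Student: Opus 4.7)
The plan is to cut $M$ along a finite family of disjoint embedded invariant $2$-tori and verify that each resulting closed piece is Seifert fibered with torus boundary. Since $h$ is nonconstant, Proposition~\ref{prop:sing-level}(a) provides finitely many critical values $c_1<\dots<c_N$ of $h$. For each $i$ I would pick a regular value $t_i\in(c_i,c_{i+1})$; by the Arnold--Liouville theorem invoked in Proposition~\ref{prop:sing-level}(c), each preimage $h^{-1}(t_i)$ is a finite disjoint union of embedded $X$-invariant $2$-tori, and together they form the cutting family $\mathcal{T}$.

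The components of $M\setminus\mathcal{T}$ fall into two classes. The ``regular slabs'' between consecutive levels $t_i$ and $t_{i+1}$ are of the form $T^2\times I$ by Arnold--Liouville and are trivially Seifert fibered. The ``singular pieces'' $U_S$ are, for each connected component $S$ of a singular level $h^{-1}(c_i)$, a connected component of $h^{-1}([c_i-\eps,c_i+\eps])$ containing $S$, for $\eps$ small; their boundary is a finite disjoint union of tori from $\mathcal{T}$. By Proposition~\ref{prop:sing-level}(b) there are four possibilities for $S$: an isolated periodic orbit (so $U_S$ is a solid torus), an embedded $2$-torus ($U_S\cong T^2\times I$ regardless of the parity of the vanishing order $k$ of $h$ along $S$), a Klein bottle (so $U_S$ is the orientable twisted $I$-bundle over $K$), or else a finite disjoint union of closed cylinders and M\"obius strips in $S_0$ whose boundary components are $d$-fold covers of periodic orbits in $S_1$. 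In the first three cases $U_S$ is a standard Seifert piece with torus boundary, as one reads off from the model.

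The main obstacle is the last ``plumbing'' case, where the $X$-flow itself need not be periodic and so does not directly give a Seifert fibration. To handle it I would assemble a Seifert fibration on $U_S$ from local pieces: a small solid-torus neighbourhood of each $\gamma\subset S_1$, and a tubular neighbourhood of each closed cylinder or M\"obius sheet of $S_0$. Near $\gamma\subset S_1$, Lemma~\ref{lem:sing-orbit}(b) describes the trace of $S$ on a transverse disk as a cyclically symmetric configuration of $m=nd$ arcs permuted by a rotation in $SO(2)$ of order~$d$; this rotational symmetry lets one choose a smooth circle foliation of the disk compatible with the arcs, whose suspension along $\gamma$ is a Seifert fibration of the solid-torus neighbourhood with $\gamma$ a (possibly exceptional) fibre of multiplicity~$d$. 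Each tubular neighbourhood of a cylinder or M\"obius sheet carries a natural $S^1$-fibration in the cylindrical/M\"obius direction, and the same cyclic arc data forces the two fibrations to agree along their common annular boundaries, so the local pieces glue to a global Seifert fibration on $U_S$.

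Gluing the regular slabs and the $U_S$'s along the tori of $\mathcal{T}$ then exhibits $M$ as a union of finitely many connected Seifert manifolds meeting along torus boundary components. The delicate step in this programme is the compatibility of the local Seifert fibrations near the periodic orbits of $S_1$, for which the cyclic rotation structure of Lemma~\ref{lem:sing-orbit}(b) is essential.
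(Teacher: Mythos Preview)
Your approach is correct and essentially the same as the paper's: both build the Seifert structure on a neighbourhood of each singular component $S$ by using the rotational symmetry of Lemma~\ref{lem:sing-orbit}(b) to fiber the solid tori around the orbits of $S_1$ (as mapping tori of disk rotations, with $\gamma$ an exceptional fibre of multiplicity $d_\gamma$) and then extending this fibration over neighbourhoods of the cylinders, M\"obius strips, tori and Klein bottles in $S_0$. The paper phrases this as constructing a single circle action on a neighbourhood of the union of singular levels rather than gluing local $S^1$-fibrations, and it absorbs your ``regular slabs'' into the integrable regions $(a,d)\times T^2$ connecting the boundary tori, but the substance is identical.
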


\begin{proof}
The proof of Lemma~\ref{lem:sing-orbit} shows that each component
$\gamma$ of $S_1$ in a component $S=S_0\cup S_1$ of a singular
level set of $h$ has a tubular neighbourhood $U_\gamma$ such that
$S\cap U_\gamma$ consists of $n$ $C^1$-immersed half-open cylinders
whose boundaries are $d_\gamma$-fold coverings of $\gamma$. Moreover, the
return map on a disk $D$ transverse to $\gamma$ acts as a rotation
$i\mapsto i+p$ on the set of $m=nd_\gamma$ arcs formed by $D\cap S$,
where $gcd(p,m)=n$. It follows that $U_\gamma$ is the mapping
torus of a rotation of $D$ by an angle $2\pi p/m$, 
%Here $n=gcd(p,q)$ components of $S_0$ meet at $\gamma$ and
%the orbit $\gamma$ has covering number $d_\gamma=q/n$. 
thus $U_\gamma$ is Seifert fibered with a circle action going
$d_\gamma$ times along the mapping torus and with exceptional orbit
$\gamma$.  

If $Z$ is the closure of a cylinder or M\"obius strip in $S_0$, then the circle
action near its boundary can be extended to a circle action on a
neighbourhood of $Z$ in $M$ preserving $Z$. Tori or Klein
bottles in $S_0$ also have neighbourhoods with circle actions
preserving them. Altogether, we find a circle action on a closed
neighbourhood $U$ of the union $N$ of the singular level sets, which is
free except for the orbits with covering number $d_\gamma>1$ arising
in $S_1$ or as exceptional orbits on M\"obius strips and Klein
bottles. Thus each component of $U$ is a Seifert manifold whose
boundary is a union of $2$-tori. Since all these $2$-tori are pairwise
connected by integrable regions $(a,d)\times T^2$ (see
Section~\ref{sec:reg}), we obtain the manifold $M$ by gluing together
these pairs of $2$-tori.  
\end{proof}

\begin{remark}\label{rem:nonconst}
(i) The gluing of the boundary $2$-tori need not match the directions
of the circle actions, so the manifold $M$ itself need not be Seifert
fibered. In $3$-manifold terminology, if $M$ is irreducible,
%\marginpar{Is $M$ always prime?}
then its JSJ decomposition contains no atoroidal pieces, and its
decomposition according to the geometrization conjecture contains only
pieces with spherical geometry. \\
(ii) If the singular level sets of $h$ consist only of $2$-tori, then
$M$ is the mapping torus of a shear map $(y,z)\mapsto(y+\ell z)$ on
$T^2$ for some $\ell\in\Z$. \\
(iii) If in Theorem~\ref{thm:real-anal-stable} the Bernoulli function
is constant, then $d\lambda=f\om$ for a function $f:M\to\R$ which
is again invariant under the flow of $X$. If $f$ is real analytic and
nonconstant, then again the conclusions of
Corollary~\ref{cor:nonconst} hold. If $f$ equals a constant
$c$, then $X$ is either the Reeb vector field of a (positive or
negative) contact structure (if $c\neq 0$), or the horizontal vector
field of a mapping torus (if $c=0$). See~\cite{CV10} for further
discussion. 
\end{remark}

%%%%%%%%%%%%%%%%%%%%%%%%%%%%%%%%%%%%%%%%%%%%%%%%%%%%%%%%%%%%%%%%%%%%%
\section{Extension of $\nu$ over the regular level sets}\label{sec:reg}
%%%%%%%%%%%%%%%%%%%%%%%%%%%%%%%%%%%%%%%%%%%%%%%%%%%%%%%%%%%%%%%%%%%%%

According to Proposition~\ref{prop:nu},
there exist a closed $1$-form $\nu$ on an open neighbourhood $U$ of
the union $N$ of the singular level sets of $h$ satisfying $\nu(X)>0$, 
and normalized such that $\int_\gamma\nu=1/d_\gamma$ for each simple
periodic orbit $\gamma$ on a singular level set. 
We fix such $\nu$ for the remainder of this section. 
Consider a connected component $V$ of $M\setminus N$. By the
Arnold-Liouville theorem (\cite{Arn66}, see also~\cite{AK98}), 
$V$ is diffeomorphic to $(a,d)\times T^2$
such that $h(r,x)=r$ and the vector field $X$ is constant on each
torus $\{r\}\times T^2$. The following lemma allows us to interpolate
between two stabilizing closed $1$-forms on such an integrable region
$V$. 

\begin{lemma}\label{lem:interpol}
Let $\om$ be a nowhere vanishing closed $2$-form on $[a,b]\times T^2$ with vector
field $X$ generating $\ker\om$ and tangent to the tori $\{r\}\times
T^2$. Let $\nu_0,\nu_1$ be two closed $1$-forms with
$\nu_i(X)>0$ and $[\nu_0]=[\nu_1]\in H^1([a,b]\times
T^2;\R)$. Then there exists a closed $1$-form $\wt\nu$ with
$\wt\nu(X)>0$ which agrees with $\nu_0$ near $\{a\}\times T^2$
and with $\nu_1$ near $\{b\}\times T^2$.  
\end{lemma}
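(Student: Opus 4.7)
The plan is to use the hypothesis $[\nu_0] = [\nu_1]$ to write the difference as an exact form and then interpolate with a cutoff in the $r$-direction. Since $\nu_0$ and $\nu_1$ are closed and cohomologous on $[a,b] \times T^2$, there exists a smooth function $f : [a,b] \times T^2 \to \R$ with $df = \nu_1 - \nu_0$. I would then pick a smooth cutoff $\chi : [a,b] \to [0,1]$ with $\chi \equiv 0$ on $[a, a+\eps]$ and $\chi \equiv 1$ on $[b-\eps, b]$, regard $\chi$ as a function on the cylinder depending only on the first coordinate $r$, and define
\begin{equation*}
   \wt\nu := \nu_0 + d(\chi f) = (1-\chi)\nu_0 + \chi\nu_1 + f\,\chi'(r)\,dr.
\end{equation*}

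Closedness is automatic, since $\wt\nu$ differs from the closed form $\nu_0$ by an exact form. The boundary matching is immediate as well: on $[a, a+\eps] \times T^2$ we have $\chi \equiv 0$ (hence $\chi' = 0$), so $\wt\nu = \nu_0$, and on $[b-\eps, b] \times T^2$ we have $\chi \equiv 1$ (hence $\chi' = 0$), so $\wt\nu = \nu_0 + df = \nu_1$.

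The one nontrivial point is the positivity condition $\wt\nu(X) > 0$, and here the tangency hypothesis on $X$ is decisive: since $X$ is tangent to the tori $\{r\} \times T^2$, one has $dr(X) = 0$. Pairing $\wt\nu$ with $X$ therefore annihilates the $f\,\chi'(r)\,dr$ term and leaves
\begin{equation*}
   \wt\nu(X) = (1-\chi(r))\,\nu_0(X) + \chi(r)\,\nu_1(X),
\end{equation*}
a convex combination of two strictly positive functions, hence strictly positive everywhere.

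I do not anticipate any genuine obstacle; the two hypotheses enter transparently. The cohomological assumption $[\nu_0] = [\nu_1]$ is exactly what is needed to produce $f$ as a globally defined smooth function on $[a,b]\times T^2$ (without it $f$ would live only on a cover and a global cutoff interpolation would fail), while the tangency of $X$ to the level tori is exactly what kills the $dr$-correction so that $\wt\nu(X)$ reduces to a convex combination. The mildly delicate part, if any, is to notice that the naive interpolation $(1-\chi)\nu_0+\chi\nu_1$ is not closed in general, whereas the cutoff-of-primitive construction $\nu_0+d(\chi f)$ is closed by design and still evaluates on $X$ to the same convex combination.
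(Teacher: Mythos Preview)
Your proof is correct and essentially identical to the paper's own argument: the paper also writes $\nu_1=\nu_0+dg$, sets $\wt\nu=\nu_0+d(\phi(r)g)$ for a cutoff $\phi$, rewrites it as the convex combination plus the $g\phi'(r)\,dr$ term, and uses $dr(X)=0$ to obtain positivity. Only the names of the cutoff and the primitive differ.
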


\begin{proof}
Since $[\nu_0]=[\nu_1]$, we can write $\nu_1=\nu_0+dg$
for some function $g$. Pick a cutoff function $\phi:[a,b]\to[0,1]$
which equals $0$ near $a$ and $1$ near $b$. We denote by $r$ the
coordinate on $[a,b]$, viewed as a function on $[a,b]\times T^2$. 
Then the $1$-form
$$
   \wt\nu := \nu_0+d\bigl(\phi(r)g\bigr)
$$
is closed and agrees with $\nu_0$ near $\{a\}\times T^2$,
and with $\nu_1$ near $\{b\}\times T^2$. Using
$dg=\nu_1-\nu_0$, we can rewrite $\wt\nu$ as
$$
   \wt\nu = \nu_0 + \phi(r)dg + g\phi'(r)dr
   = \bigl(1-\phi(r)\bigr)\nu_0 + \phi(r)\nu_1 + g\phi'(r)dr.   
$$
Since $\nu_i(X)>0$ and $dr(X)=0$, we see that 
$$
   \wt\nu(X) = \bigl(1-\phi(r)\bigr)\nu_0(X) +
   \phi(r)\nu_1(X) > 0, 
$$
so $\wt\nu$ has the desired properties.  
\end{proof}

Pick values $a<b<c<d$ such that the form
$\nu$ is defined near 
$$
   V_0=\bigl((a,b]\cup[c,d)\bigr)\times T^2. 
$$
Let $\bar\nu$ be the $T^2$-invariant $1$-form on $V_0$ obtained by
averaging $\nu$ over the tori $\{r\}\times T^2$. It is closed,
represents the same cohomology class as $\nu$, and still satisfies
$\bar\nu(X)>0$ (due to $T^2$-invariance of $X$). By
Lemma~\ref{lem:interpol}, we find a closed $1$-form $\wt\nu$ on $V_0$
with $\wt\nu(X)>0$ which agrees with $\nu$ near $\{a,d\}\times T^2$
and with $\bar\nu$ near $\{b,c\}\times T^2$.   

Suppose first that the direction of the vector field $X$ is {\em not}
constant in $r\in(a,d)$. By choosing $b,c$ sufficiently close to $a,d$
we can arrange that  the direction of $X$ is not
constant in $r\in(b,c)$. Then Proposition 3.14 in~\cite{CV10} provides an
extension of $\wt\nu$ to a stabilizing $1$-form over $V$.

Next suppose that the direction of the vector field $X$ is 
constant in $r\in(a,d)$. In this case, there exists an obstruction to
extending $\wt\nu$ to a stabilizing $1$-form over $V$. To describe it, 
we write 
$$
   X=\rho(r)\bar X
$$ 
for a constant vector field $\bar X$ on $T^2=\R^2/\Z^2$. For each
$r\in(a,d)$ we define a $1$-current on $V$ by 
\begin{equation}\label{eq:current}
   c_r(\alpha) := \int_{\{r\}\times T^2}\alpha(\bar X)d\theta\,d\phi
\end{equation}
for a $1$-form $\alpha$. Thus $c_r(\alpha)$ is the pairing of the
average of $\alpha$ over $\{r\}\times T^2$ with the vector $\bar X$. 
It is shown in Lemma~\ref{lem:current} below that $c_r(\nu)$ equals a
constant $c_-(\nu)$ for $r\in(a,b)$ and a constant $c_+(\nu)$ for
$r\in(c,d)$, and $c_-(\nu)=c_+(\nu)$ is a necessary condition for
extending $\wt\nu$ to a stabilizing $1$-form over $V$. On the other
hand, this condition is also sufficient according to Lemma 3.10
in~\cite{CV10}. Performing this construction for all components $V$, 
Theorem~\ref{thm:real-anal-stable} follows once we can show that this
condition can always be satisfied, which is the content of

%\begin{prop}\label{prop:ext}
%Let $V\subset M$ be a region diffeomorphic to $(a,d)\times T^2$
%such that $h(r,x)=r$ and the vector field $X$ is constant on each
%torus $\{r\}\times T^2$. Let $\nu$ be a closed $1$-form given near $\p
%V$ with $\nu(X)>0$. If the direction of the vector field $X$ is 
%constant in $r$, suppose in addition that $c_r(\nu)=1$ for $r$ near
%$a$ or $d$. Then $\nu$ extends to a closed $1$-form on $V$
%with $\nu(X)>0$. 
%\end{prop}

\begin{prop}\label{prop:fol-cycle}
We can choose the $1$-form $\nu$ in Proposition~\ref{prop:nu} such that
$c_-(\nu)=c_+(\nu)$ for all integrable regions $V\subset M\setminus N$
on which the direction of $X$ is constant. 
\end{prop}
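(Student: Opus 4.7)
My approach is to split into two cases according to whether the constant direction of $\bar X$ on $V$ is rational (orbits close up) or irrational, and to show the constraint $c_-(\nu)=c_+(\nu)$ is either automatic or can be arranged by rescaling.

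In the rational case, $\bar X$ being constant on $V\cong(a,d)\times T^2$ and its primitive integer class being fixed by the trivialization, the period $T$ of a closed $\bar X$-orbit on $T^2_r$ is the same for every $r\in(a,d)$. For $r$ near a boundary of $V$ where $\nu$ is defined and closed, a Fubini-type computation yields $c_r(\nu)=(1/T)\int_{\gamma_r}\nu$ for any closed $\bar X$-orbit $\gamma_r\subset T^2_r$, and closedness of $\nu$ forces $\int_{\gamma_r}\nu$ to be independent of $r$. I would then inspect each possible type of boundary component of $V$ in $N$ (a $2$-torus, a Klein bottle, an isolated periodic orbit, or a cylinder/M\"obius-strip configuration meeting at periodic orbits in $S_1$) and show that $\gamma_r$ degenerates onto a $d_\gamma$-fold cover of some periodic orbit $\gamma\subset N$. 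The normalization $\int_\gamma\nu=1/d_\gamma$ from Proposition~\ref{prop:nu} then yields $\int_{\gamma_r}\nu=1$, hence $c_\pm(\nu)=1/T$, and the constraint is automatic.

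For the irrational case, Proposition~\ref{prop:sing-level} forces every boundary of $V$ to be a $2$-torus with irrational flow, since Klein bottles and cylinder/M\"obius configurations always carry periodic flow. Such tori are rescalable by Remark~\ref{rem:nu}: multiplying $\nu$ by $s>0$ on such a torus $T$ multiplies $c_T(\nu)$ by $s$, so $c_T(\nu)$ can be prescribed to any positive value. I would form the graph $G$ whose vertices are the irrational tori in $N$ and whose edges are the constant-direction irrational integrable regions. Since the constant direction on such an edge agrees with the intrinsic flow on each bounding torus, the value $c_{T_i}(\nu)$ is intrinsic to $T_i$ and independent of the chosen edge; every edge enforces the single equation $c_{T_1}(\nu)=c_{T_2}(\nu)$, and every cycle in $G$ closes up trivially. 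Thus in each connected component of $G$ I may fix a common positive target and rescale each vertex torus to attain it, satisfying all constraints.

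The main obstacle is the wrapping claim in the rational case. For cylinder/M\"obius configurations I would use the mapping-torus-of-rotation model for $U_\gamma$ from the proof of Corollary~\ref{cor:nonconst} and verify in $H_1(T^2_r)$ that a regular orbit represents $d_\gamma$ times the class of $\gamma$. For a Klein bottle $K$ I would pass to its orientation double cover: a generic $K$-orbit (with $d_\gamma=1$) lifts to two orbits mapping $1$-to-$1$, while a special $K$-orbit (with $d_\gamma=2$) has a single preimage orbit mapping $2$-to-$1$; in each subcase the wrapping multiplicity equals $d_\gamma$, so $d_\gamma\cdot(1/d_\gamma)=1$ gives $\int_{\gamma_r}\nu=1$ as needed.
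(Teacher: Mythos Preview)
Your rational case is essentially the paper's argument: the paper normalizes $\bar X=\p_\phi$ so that the period is $1$, invokes formula~\eqref{eq:c-rat}, and uses the homology relation $[\gamma_r]=d_\gamma[\gamma]$ coming from the mapping-torus description in Lemma~\ref{lem:sing-orbit} to get $c_\pm(\nu)=1$. Your chain/graph argument in the irrational case is also the paper's strategy (the paper phrases it as ``maximal chains'' of irrational regions meeting along irrational tori or isolated periodic orbits, and propagates the normalization along the chain).

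However, there is a genuine gap in your irrational case. You write that ``Proposition~\ref{prop:sing-level} forces every boundary of $V$ to be a $2$-torus with irrational flow, since Klein bottles and cylinder/M\"obius configurations always carry periodic flow.'' Two things are wrong here. First, Proposition~\ref{prop:sing-level}(c) does \emph{not} say cylinders always carry periodic flow: it explicitly allows cylinders on which every orbit spirals toward the boundary periodic orbits. You have not excluded this possibility. Second, even for the pieces that \emph{do} carry periodic flow, Proposition~\ref{prop:sing-level} does not by itself explain why such a piece cannot sit at the boundary of an irrational integrable region; the tori $T^2_r$ degenerate onto $S$ in a potentially complicated way, and one needs an actual argument to rule this out. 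Finally, you have omitted the case where $S$ is an isolated periodic orbit, which the paper shows can occur.

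The paper fills this gap with Lemma~\ref{lem:ends}, whose proof is not elementary: one passes to a weak$^*$ limit $\fm$ of the invariant measures associated to the currents $c_r$ and uses ergodicity of the irrational flow on $T^2_r$ (via~\eqref{eq:c-irrat}). Step~1 excludes periodic orbits in $S_0$ by manufacturing a closed $1$-form $\alpha$ supported near such an orbit with $\alpha(X)|_S\equiv 0$ but $c_r(\alpha)\equiv c\neq 0$ for $r>a$, contradicting~\eqref{eq:fm}. Step~2 excludes non-periodic cylinders by showing that $\fm$ cannot be supported on their interior (orbits spend vanishing fraction of time there) and then repeating the Step~1 trick. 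Only after this does one know that $S$ is either an irrational torus or an isolated periodic orbit, and both of these carry the rescaling freedom of Remark~\ref{rem:nu}. Your plan needs this lemma (or an equivalent argument) inserted before the graph construction.
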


The proof of this proposition is based on an analysis of the currents
$c_r$. Throughout the following discussion, we always need to
distinguish the following two cases.

{\bf Case 1: }The direction of $\bar X$ is rational. 

In this case, we normalize $\bar X$ and choose coordinates
$(\theta,\phi)$ on $T^2=\R^2/\Z^2$ such that $\bar X=\p_\phi$, so the
periodic orbits of $\bar X$ are the circles $\{(r,\theta)\}\times S^1$
and have period $1$. It follows that
\begin{equation}\label{eq:c-rat}
   c_r(\alpha) =
   \int_0^1\Bigl(\int_{\{(r,\theta)\}\times S^1}\alpha\Bigr)d\theta. 
\end{equation}

{\bf Case 2: }The direction of $\bar X$ is irrational. 

In this case, since an irrational linear flow on the torus is ergodic, 
as a direct consequence of Birkhoff's ergodic theorem we have 
\begin{equation}\label{eq:c-irrat}
   c_r(\alpha) =
   \lim_{T\to\infty}\frac{1}{T}\int_0^T\alpha\bigl(\dot\gamma(t)\bigr)dt
\end{equation}
for each orbit $\gamma$ of $\bar X$ on $\{r\}\times T^2$. 

\begin{lemma}\label{lem:current}
In both cases, the currents $c_r$ have the following properties. 

(a) $c_r$ is closed and invariant under $\bar X$. 

(b) The homology class $[c_r]\in H_1(T^2;\R)$ is independent of $r$. 

(c) $c_r(\wt\nu)$ is independent of $r$ for each $1$-form $\wt\nu$
   satisfying $i_Xd\wt\nu=0$. 

(d) For all $r_1,r_2$ and $\lambda$ satisfying $i_Xd\lambda=-dh$ ($=-dr$),
$$
   c_{r_2}(\lambda)-c_{r_1}(\lambda) =
   \int_{r_1}^{r_2}\frac{1}{\rho(r)}dr.
$$ 
\end{lemma}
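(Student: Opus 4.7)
I plan to prove all four statements uniformly from the definition
\[
   c_r(\alpha)=\int_{\{r\}\times T^2}\alpha(\bar X)\,d\theta\,d\phi,
\]
without needing to distinguish the rational and irrational subcases; the case distinction only enters if one wants to reinterpret $c_r$ via~\eqref{eq:c-rat} or~\eqref{eq:c-irrat}. The two basic inputs will be Cartan's magic formula $L_{\bar X}=d\circ i_{\bar X}+i_{\bar X}\circ d$ and the vanishing identity
\[
   \int_{\{r\}\times T^2}L_{\bar X}f\cdot d\theta\,d\phi \;=\; \int_{T^2}d\bigl(i_{\bar X}(f\,d\theta\,d\phi)\bigr) \;=\; 0
\]
for every smooth function $f$ on the slab $(a,d)\times T^2$, which holds because the constant vector field $\bar X$ preserves the area form $d\theta\,d\phi$.

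\textbf{Parts (a) and (b).} For closedness of $c_r$ I plug $\alpha=df$ into the definition and read off $c_r(df)=\int L_{\bar X}f\cdot d\theta\,d\phi=0$. For $\bar X$-invariance, $[\bar X,\bar X]=0$ gives $(L_{\bar X}\alpha)(\bar X)=L_{\bar X}\bigl(\alpha(\bar X)\bigr)$, so $c_r(L_{\bar X}\alpha)$ vanishes by the same identity. For (b), closedness of $c_r$ implies that its pairing with closed $1$-forms on $\{r\}\times T^2$ depends only on cohomology classes; writing a closed $\alpha$ as $\eta+df$ with $\eta$ a constant-coefficient (harmonic) form, one obtains $c_r(\alpha)=c_r(\eta)=\eta(\bar X)\int_{T^2}d\theta\,d\phi=\eta(\bar X)$, manifestly independent of $r$. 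Under the standard isomorphism $H_1(T^2;\R)\cong\R^2$ this identifies $[c_r]$ with $\bar X$ for every $r$.

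\textbf{Parts (c) and (d).} Setting $u:=i_{\bar X}\wt\nu$, so that $c_r(\wt\nu)=\int_{\{r\}\times T^2}u\,d\theta\,d\phi$ and hence $\frac{d}{dr}c_r(\wt\nu)=\int_{\{r\}\times T^2}\p_r u\,d\theta\,d\phi$, I apply Cartan's formula to $\wt\nu$ to get $du=L_{\bar X}\wt\nu-i_{\bar X}d\wt\nu$, and evaluate at $\p_r$. Since $\p_r$ commutes with $\bar X$, this yields the pointwise identity
\[
   \p_r u \;=\; L_{\bar X}\bigl(i_{\p_r}\wt\nu\bigr)\;-\;(i_{\bar X}d\wt\nu)(\p_r).
\]
The first term integrates to zero over $T^2$ by the identity of the first paragraph, leaving
\[
   \frac{d}{dr}c_r(\wt\nu) \;=\; -\int_{\{r\}\times T^2}(i_{\bar X}d\wt\nu)(\p_r)\,d\theta\,d\phi.
\]
Part (c) is then immediate, since $i_X d\wt\nu=0$ forces $i_{\bar X}d\wt\nu=0$. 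For (d), combining $X=\rho(r)\bar X$ with $i_Xd\lambda=-dr$ gives $i_{\bar X}d\lambda=-\rho(r)^{-1}dr$, hence $(i_{\bar X}d\lambda)(\p_r)=-\rho(r)^{-1}$ and $\frac{d}{dr}c_r(\lambda)=\rho(r)^{-1}$; integrating in $r$ from $r_1$ to $r_2$ delivers the claimed formula. The only thing to watch is the bookkeeping of transverse versus torus directions; there is no real obstacle, as all four statements reduce to Cartan's formula together with the vanishing of $T^2$-averages of $\bar X$-Lie derivatives.
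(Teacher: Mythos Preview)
Your proof is correct and takes a genuinely different route from the paper's. The paper works in the irrational case via the ergodic formula~\eqref{eq:c-irrat}, proving (a) by computing $\lim_{T\to\infty}\frac{1}{T}\bigl(f(\gamma(T))-f(\gamma(0))\bigr)=0$ along a trajectory $\gamma$ of $\bar X$, and (b)--(d) by introducing a $2$-current $C(\beta)=\int_{r_1}^{r_2}c_r(i_{\p_r}\beta)\,dr$ (a limit of long thin strips $[r_1,r_2]\times\gamma([0,T])$) whose boundary is $c_{r_2}-c_{r_1}$, then applying Stokes' theorem. The rational case is handled in parallel using~\eqref{eq:c-rat}.

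Your argument instead stays with the averaging definition~\eqref{eq:current} throughout and differentiates $c_r(\wt\nu)=\int_{T^2}\wt\nu(\bar X)\,d\theta\,d\phi$ in $r$, reducing everything to Cartan's formula and the single observation that $\int_{T^2}L_{\bar X}f\,d\theta\,d\phi=0$ for a constant vector field $\bar X$. This is more elementary: it avoids the case split, the limits $T\to\infty$, and the auxiliary $2$-current, and it makes the independence of $r$ in (b) transparent by identifying $[c_r]$ with $\bar X\in\R^2\cong H_1(T^2;\R)$. The paper's approach, on the other hand, emphasises the dynamical interpretation of $c_r$ as a Sullivan foliation cycle built from trajectories, which is the viewpoint exploited later in the proof of Lemma~\ref{lem:ends}; so while your route is cleaner for this lemma in isolation, the paper's computations feed directly into the subsequent ergodic arguments.
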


\begin{proof}
We prove everything in Case 2; the proof in Case 1 is similar but
easier, using formula~\eqref{eq:c-rat} instead of~\eqref{eq:c-irrat}.  

(a) For closedness, note that for each function $f:V\to\R$ we have
\begin{align*}
   \p c_r(f) = c_r(df) 
   = \lim_{T\to\infty}\frac{1}{T}\int_0^Tdf\bigl(\dot\gamma(t)\bigr)dt
   = \lim_{T\to\infty}\frac{1}{T}\Bigl(f\bigl(\gamma(T)\bigr) -
   f\bigl(\gamma(0)\bigr)\Bigr)dt = 0
\end{align*}
because the term $f\bigl(\gamma(t)\bigr) - f\bigl(\gamma(0)\bigr)$
is bounded uniformly in $T$. For invariance, let $\phi_\tau$ be the
flow of $\bar X$. Then we get
$$
   c_r(\phi_\tau^*\alpha) 
   = \lim_{T\to\infty}\frac{1}{T}\int_{\gamma([0,T])}\phi_\tau^*\alpha
   = \lim_{T\to\infty}\frac{1}{T}\int_{\gamma([\tau,T+\tau])}\alpha
   = c_r(\alpha). 
$$
(b) For $r_1<r_2$ we define the $2$-current
$$
   C(\beta) := \lim_{T\to\infty}\frac{1}{T}\int_{r_1}^{r_2}\int_0^T
   \beta\bigl(\p_r,\dot\gamma(t)\bigr)dr\,dt 
   = \int_{r_1}^{r_2}c_r(i_{\p_r}\beta)dr. 
$$
Then we get
\begin{align*}
   \p C(\alpha) 
   &=
   \lim_{T\to\infty}\frac{1}{T}\int_{[r_1,r_2]\times\gamma([0,T])}d\alpha
   \cr
   &= \lim_{T\to\infty}\frac{1}{T}\Bigl\{
   \int_{[r_1,r_2]\times\gamma(0)}\alpha 
   + \int_{\{r_2\}\times\gamma([0,T])} \alpha
   - \int_{[r_1,r_2]\times\gamma(T)}\alpha 
   - \int_{\{r_1\}\times\gamma([0,T])} \alpha \Bigr\} \cr
   &= c_{r_2}(\alpha) - c_{r_1}(\alpha)
\end{align*}
because the first and third term in the big bracket are bounded
uniformly in $T$. 

(c) From the proof of (b) and $i_Xd\wt\nu=0$ we get
$$
   c_{r_2}(\wt\nu) - c_{r_1}(\wt\nu) 
   = \lim_{T\to\infty}\frac{1}{T}\int_{r_1}^{r_2}\int_0^T
   d\wt\nu\bigl(\p_r,\dot\gamma(t)\bigr)dr\,dt
   = 0
$$
because the integrand vanishes identically. 

(d) From $i_Xd\lambda=-dr$ and $\dot\gamma=\bar X=\frac{1}{\rho}X$ we get 
$$
   d\lambda\bigl(\p_r,\dot\gamma(t)\bigr)
   = -\frac{1}{\rho(r)}(i_Xd\lambda)(\p_r) = \frac{1}{\rho(r)}dr(\p_r) =
   \frac{1}{\rho(r)},
$$
and thus from the proof of (b),  
$$
   c_{r_2}(\lambda) - c_{r_1}(\lambda) 
   = \lim_{T\to\infty}\frac{1}{T}\int_{r_1}^{r_2}\int_0^T
   d\lambda\bigl(\p_r,\dot\gamma(t)\bigr)dr\,dt
   = \int_{r_1}^{r_2}\frac{1}{\rho(r)}dr.    
$$
\end{proof}

Property (a) means that $c_r$ it is a foliation cycle in the sense
of~\cite{Su76} supported on $\{r\}\times T^2$. 
For a smooth function $f:V\to\R$ we denote its average over
$\{r\}\times T^2$ by 
$$
   \ol{f}(r) := \int_{\{r\}\times T^2}f\,d\theta\,d\phi. 
$$
Let $\nu$ be the closed $1$-form on $V_0\subset V$ with $\nu(X)>0$ from
above and define $c_-=c_-(\nu):=c_r(\nu)$ for $r\in(a,b)$. Then for $r\in(a,b)$, 
$$
   0<c_-\equiv c_r(\nu) = \ol{\nu(\bar X)}(r) =
   \frac{1}{\rho(r)}\ol{\nu(X)}(r).  
$$
Since $\nu(X)$ is bounded away from $0$ and $\infty$, so is
$\ol{\nu(X)}(r)$, and hence by the preceding line also $\rho(r)$. Since 
$$
   c_r(\lambda) = \ol{\lambda(\bar X)}(r) =
   \frac{1}{\rho(r)}\ol{\lambda(X)}(r)
$$
and $\lambda(X)$ is bounded away from $0$ and $\infty$, so are
$\ol{\lambda(X)}(r)$ and $c_r(\lambda)$.

\begin{lemma}\label{lem:ends}
Let $V\subset M\setminus N$ be an integrable region on which the
direction of $\bar X$ is constant and let $S=\ol{V}\cap h^{-1}(a)$ or
$S=\ol{V}\cap h^{-1}(d)$. 

(a) If the direction of $\bar{X}$ is rational, then $S$ is one of the
following: 
\begin{enumerate}
\item an isolated periodic orbit,
\item a rational torus or a Klein bottle,
\item a union of periodic cylinders and M\"obius strips connected at
  their (possibly multiply covered) boundaries.
\end{enumerate}
(b) If the direction of $\bar{X}$ is irrational, then $S$ is one of the
following: 
\begin{enumerate}
\item an isolated periodic orbit,
\item an irrational torus.
\end{enumerate}
\end{lemma}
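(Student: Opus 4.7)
The approach combines the classification of singular level set components from Proposition~\ref{prop:sing-level} with the fact that the flow of $X$ on every torus in $V$ is a linear flow in direction $\bar X$, so on $S$ the dynamics must be the Hausdorff limit of such linear flows. I would argue by a case distinction on the rationality of $\bar X$ and on the type of component allowed by Proposition~\ref{prop:sing-level} (b)--(c).

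In case (b), $\bar X$ is irrational and every orbit of $X$ on every torus $\{r_n\}\times T^2$ is dense. Taking $r_n\to a$ and letting the tori Hausdorff-converge onto $S$ forces each $2$-dimensional piece of $S$ to support a flow with dense orbits. Inspecting Proposition~\ref{prop:sing-level} (c), only irrational $2$-tori admit such dynamics: Klein bottles, M\"obius strips, cylinders with periodic flow, and cylinders whose flow lines converge to the boundary all possess open invariant subsets on which no orbit is dense. Together with the trivial case of an isolated periodic orbit (which is compatible with any direction of $\bar X$), this yields the dichotomy (b).

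In case (a), $\bar X$ is rational and each torus in $V$ is foliated by a one-parameter family of closed orbits in a fixed rational homology class. The limit on $S$ must therefore be built from pieces that themselves carry a one-parameter family of closed orbits with this direction. Rational $2$-tori, Klein bottles, M\"obius strips, and periodic cylinders all carry such a foliation and are admissible, and the gluing of cylinders and M\"obius strips at their (possibly $d$-fold covered) boundaries is allowed by Lemma~\ref{lem:sing-orbit} (b). Irrational tori carry no closed orbits, while cylinders whose flow lines converge to the boundary carry only the two boundary closed orbits; neither can absorb the full family of closed orbits coming from the nearby tori in $V$, so both are excluded. This gives the trichotomy (a).

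The step I expect to be the main obstacle is making the persistence of the linear flow structure rigorous near the periodic orbits $\gamma\subset S_1$ with covering multiplicity $d>1$, where several pieces of $S_0$ meet and the tangent planes of the approaching tori in $V$ collapse along $\gamma$. Here a direct limit argument on tangent planes is inadequate, and the needed control should instead come from Lemma~\ref{lem:sing-orbit} (b): the $C^1$-immersedness of the cylinders and M\"obius strips abutting $\gamma$, together with the cyclic-rotation structure of the first return map on a transverse disk, should allow one to transfer the direction $\bar X$ piece by piece to each of the finitely many surface pieces meeting at $\gamma$ and thereby complete the classification.
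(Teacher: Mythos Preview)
Your treatment of part (a) is on the right track and essentially matches the paper's argument, though you should make the limiting step explicit: the periodic orbits on $\{r\}\times T^2$ have uniformly bounded periods (since $\rho$ is bounded), so by Arzela--Ascoli every point of $S$ lies on a periodic orbit, and Proposition~\ref{prop:sing-level} then pins $S$ down to types (i)--(iii). Your phrasing ``the limit must carry a one-parameter family of closed orbits'' is imprecise; what is actually needed is periodicity at \emph{every} point of $S$.

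Your argument for part (b) has a genuine gap, and not where you located it. The claim that Hausdorff convergence of the irrational tori $\{r_n\}\times T^2$ onto $S$ ``forces each $2$-dimensional piece of $S$ to support a flow with dense orbits'' is unjustified: density of orbits is an asymptotic property (as $t\to\infty$) and does not pass to limits of flows. A sequence of minimal linear flows can perfectly well converge to a flow with periodic or heteroclinic orbits --- ruling this out is precisely the content of the lemma, so it cannot be assumed. The obstacle you flagged near multiply covered orbits in $S_1$ is secondary to this.

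The paper's proof of (b) is completely different and measure-theoretic. It writes the foliation cycles $c_r$ as $c_r(\alpha)=\int_M\alpha(X)\,d\fm_r$ for $X$-invariant Borel measures $\fm_r$ supported on $\{r\}\times T^2$, and uses Banach--Alaoglu to extract a weak$^*$ limit measure $\fm$ supported on $S$. To exclude a periodic orbit $\delta_a\subset S_0$, one builds a closed $1$-form $\alpha=df$ near $\delta_a$ with $\alpha(X)\equiv 0$ on $S$ (since $f$ is constant along the periodic orbits there) but $c_r(\alpha)$ equal to a nonzero constant for $r>a$ (since the irrational orbits cross $\{f=0\}$ transversally); this contradicts $c_{r_n}(\alpha)\to\int\alpha(X)\,d\fm=0$. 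Excluding nonperiodic cylinders requires an additional step showing that $\fm$ has no mass on their interiors (orbits on nearby tori spend asymptotically all their time near the boundary periodic orbits), after which the same closed-form test yields the contradiction.
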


\begin{proof}
We consider the case $S=\ol{V}\cap h^{-1}(a)$, the other one being
analogous. 

In case (a), the flow of $X$ is periodic on each $\{r\}\times
T^2$ for $r>a$, with uniformly bounded period. Thus for each $p\in S$
we find a sequence of periodic orbits
$\gamma_n:[0,T_n]\to\{r_n\}\times T^2$ with $\gamma_n(0)\to p$ as
$n\to\infty$ and uniformly bounded periods $T_n$. By the Arzela-Ascoli
theorem, a subsequence of these periodic orbits converges to a
periodic orbit through $p$. This shows that the flow is also periodic
on $S$, so $S$ must be of one of the types (i)-(iii) described in (a)
of the lemma. 

Consider now case (b) in which the direction of $\bar X$ is
irrational. It follows from $X=\rho\bar X$ and~\eqref{eq:current} that
for $r>a$ the currents $c_r$ are given by
$$
   c_r(\alpha) = \int_M\alpha(X)d\fm_r
$$
for Borel measures $\fm_r$ supported on $\{r\}\times T^2$ and
invariant under the flow of $X$. Moreover, the total mass of these
measures is uniformly bounded away from $0$ and $\infty$. By the
Banach-Alaoglu theorem (viewing the space of Borel measures as dual to
the space of continuous functions with the weak$^*$ topology), there
exists a sequence $r_n\searrow a$ such that the $\fm_{r_n}$ converge
in the weak$^*$ topology to a nontrivial invariant Borel measure $\fm$
supported on $S$. Recall that weak$^*$ convergence means that for each
continuous $1$-form $\alpha$ defined on a neighbourhood of $S$ in
$V\cup S$ (and arbitrarily extended to $M$),
\begin{equation}\label{eq:fm}
   \lim_{n\to\infty}c_{r_n}(\alpha) = \int_M\alpha(X)d\fm. 
\end{equation}
Using this, we will now finish the proof in two steps. 

{\em Step 1. }
We will first show that $S_0$ cannot contain any periodic
orbit. Arguing by contradiction, suppose that $S_0$ carries a
periodic orbit $\delta_a$. Then by Proposition~\ref{prop:sing-level} a
tubular neighbourhood 
$(-\eps,\eps)\times S^1$ of $\delta_a=\{0\}\times S^1$ in $S_0$ is
foliated by periodic orbits $\{x\}\times 
S^1$. Pick a nondecreasing function $f:(-\eps,\eps)\to[-1,1]$ which
equals $\pm 1$ near $\pm\eps$ and has the regular level set
$\{0\}=f^{-1}(0)$. We extend $f$ by projection 
onto the first factor to a function on $(-\eps,\eps)\times S^1$, and
from there by another projection to a function on a neighbourhood
$[a,b)\times(-\eps,\eps)\times S^1$ in $V\cup S$ on which
$h(r,x,y)=h(r)$ and $f(r,x,y)=f(x,y)$. (Here we use
Lemma~\ref{lem:real-anal-sets}(c), which asserts that the level sets
of $h$ form a foliation near $\delta_a$.)  The exact $1$-form $df$ on
this set vanishes near $[a,b)\times\{\pm\eps\}\times S^1$, so
it can be extended by zero to a closed $1$-form $\alpha$ on
$h^{-1}([a,b))$. By construction we have $\alpha(X)\equiv
0$ on $S$ and hence $\int_M\alpha(X)d\fm=0$. 
On the other hand, the direction of $\bar X$ is irrational for $r>a$,
so we can arrange that the trajectories of $\bar X$ on $\{r\}\times
T^2$ are transverse to
the closed curve $\delta_r:=f^{-1}(0)\cap\{r\}\times T^2$ for all
$r>a$. Since $\alpha|_{\delta_r}=0$, it follows that
$\frac{1}{T}\int_0^T\alpha(\dot\gamma)dt=\frac{1}{T}\int_0^Tdf(\dot\gamma)dt
= \pm\frac{2}{T}$, where $T$ is the time
in which a trajectory $\gamma$ of $\bar X$ starting from 
$\delta_r$ hits it again for the first time. By averaging over the
trajectories we obtain $c_r(\alpha)=c\neq 0$ for all $r>a$, where the
constant $c$ does not depend on $r$ by closedness of $\alpha$ and
Lemma~\ref{lem:current}(c). But this contradicts the convergence
in~\eqref{eq:fm}. 

{\em Step 2. }
The preceding step excludes that $S$ contains a rational torus,
Klein bottle, or M\"obius strip, or cylinder with periodic flow. It
remains to exclude cylinders with nonperiodic flow. Arguing again by
contradiction, suppose that $Z\subset S$ is such a cylinder. 

We claim that the support of the measure $\fm$ does not meet $
\mathring{Z}=Z\setminus\p Z$. To see this, pick open neighbourhoods  
$U_\pm\subset M$ of the boundary orbits $\p_\pm Z$, and a compact
collar neighbourhood $W$ of $Z\setminus(U_+\cup U_-)$ in
$M\setminus(U_+\cup U_-)$ in which the flow moves from $U_-$ 
to $U_+$. Pick a flow box
$[a,b]\times[-\eps,\eps]\times[-\eps,\eps]\subset W$ with coordinates
$(r,x,y)$ around any point $p=(a,0,0)\in Z\setminus(U_+\cup U_-)$ on
which $h(r,x,y)=h(r)$ and  $X=\p_x$.  
%Moreover, we can arrange that $\nu$
%vanishes on the arcs $\{r\}\times\{0\}\times[-\delta,\delta]$. 
Given a $1$-form $\alpha$ and the sequence $r_n\searrow a$ from above,
we pick $\bar X$-orbits $\gamma_n$ with 
$\gamma_n(0)=(r_n,0,0)$ and $\gamma_n(T_n)=(r_n,0,y_n)$ for some
$y_n\in[-\eps,\eps]$. This is possible because the flow on
$\{r_n\}\times T^2$ is irrational. Moreover, we pick the
$T_n\to\infty$ large enough so that 
$$
   \left|\frac{1}{T_n}\int_0^{T_n}\alpha\bigl(\dot\gamma_n(t)\bigr)dt -
   c_{r_n}(\alpha)\right| \to 0 
$$
as $n\to\infty$. It follows that
$$
   \int_M\alpha(X)d\fm = \lim_{n\to\infty}\frac{1}{T_n}\int_0^{T_n}
   \alpha\bigl(\dot\gamma_n(t)\bigr)dt.  
$$
%for each neighbourhood $U\subset M$ of the periodic orbits in $S$, 
Since the orbits $\gamma_n$ always traverse $W$ in some uniformly
bounded time, and the time they spend in $U_\pm$ tends to infinity as
$n\to\infty$ (because they get closer and closer to the periodic
orbits $\p_\pm Z$), the fraction of time the orbit $\gamma_n$ spends in $W$ 
converges to $0$ as $n\to\infty$. Hence the parts of $\gamma_n$
in $W$ do not contribute to the limit of the above integral, and thus
$\int_M\alpha(X)d\fm=0$ if $\alpha$ has support in $W$. Making $U_\pm$
arbitrarily small, this proves the claim.

Now we pick any simple closed curve $\delta_a\subset\mathring{Z}$
transverse to $X$ and construct a closed $1$-form $\alpha$ supported
near $\delta_a$ as in Step 1. As in Step 1, it follows that
$c_r(\alpha)=c\neq 0$ for all $r>a$ close to $a$. On the other hand, 
$\int_M\alpha(X)d\fm=0$ because the supports of $\alpha$ and $\fm$ do
not intersect. So again we have a contradiction to~\eqref{eq:fm},
which concludes the proof of Lemma~\ref{lem:ends}. 
\end{proof}

\begin{proof}[Proof of Proposition~\ref{prop:fol-cycle}]
{\em Case 1: The direction of $\bar X$ is rational.}
By Lemma \ref{lem:ends}(a), in this case the flow
of $\bar X$ is also periodic on $S$. Let $\gamma$ be any
periodic orbit in $S$ with covering number $d_\gamma$ as defined above. 
The local analysis near $\gamma$ in Lemma~\ref{lem:sing-orbit} shows
that $d_\gamma\gamma$ is homologous to the simple closed orbits on
$\{r\}\times T^2$ for $r>a$, Hence formula~\eqref{eq:c-rat},
closedness of $\nu$, and the normalization $\int_\gamma\nu=1/d_\gamma$
imply for $r>a$ close to $a$:  
%Let $\gamma_1,\dots,\gamma_\ell$ be the
%singular periodic orbits in $S$, and let $\gamma$ be a regular
%periodic orbit on a cylinder $Z\subset S$. Let $\gamma_i$ be one
%boundary orbit of $Z$, at which $Z$ is $k$-fold covered for some
%$k\in\N$. Since $\nu$ is closed, and $\gamma$ is homologous to
%$k\gamma_i$ as well as to the simple closed orbits on $\{r\}\times
%T^2$ for $r>a$, the normalization of $\nu$ implies
$$
   c_-(\nu) = c_r(\nu) = \int_{d_\gamma\gamma}\nu = 1. 
$$ 
The same argument near $r=d$ yields $c_+(\nu)=1$. 

{\em Case 2: The direction of $\bar X$ is irrational. }
By Lemma~\ref{lem:ends}(b), $S_a=\ol{V}\cap h^{-1}(a)$ is either an
isolated periodic orbit or a torus on which the flow is irrational,
and the same holds for $S_b=\ol{V}\cap h^{-1}(b)$. If $S_a$ is an
irrational torus, there is another integrable region $V'\subset
M\setminus N$ containing $S_a$ in its boundary. If the direction of
the vector field $X$ on $V'$ is constant, then it must be irrational by
Lemma~\ref{lem:ends}(a). So the other boundary component of $V'$ is
again a torus with irrational flow or an isolated periodic orbit. 
Continuing like this in both directions, we find a maximal chain
of connected components of $M\setminus N$ on which the direction of
$X$ is constant and irrational, meeting along tori with irrational
flow. Now we use the remaining freedom in the normalization of $\nu$
on the isolated periodic orbits and irrational tori in
Remark~\ref{rem:nu} to arrange 
$c_+(\nu)=c_-(\nu)$ throughout this chain as follows: We arbitrarily
normalize $\nu$ on the first boundary component and then succesively 
extend it over the integrable regions, thus inducing normalizations on
the other boundary components such that the condition
$c_+(\nu)=c_-(\nu)$ holds across this chain. If the first and last
boundary components in this chain coincide (this can only happen when
both are irrational tori), then the $c_\pm(\nu)$ are the same at all
the boundary tori in this chain and thus match also at the ends. 

%Then $M$ is foliated by invariant $2$-tori, so it fibers over the
%circle and the pullback of any nonvanishing $1$-form on the circle
%provides a closed stabilizing $1$-form. \\
%(b) The first and last boundary components are different. Then the
%closure of the components of the chain is diffeomorphic to
%$[0,1]\times T^2$, with $X$ tangent to the $\{r\}\times T^2$ and
%constant direction. This region can be stabilized by a closed $1$-form
%$\nu$ on $T^2$ with $\nu(X)>0$. On the connected components of
%$M\setminus N$ adjacent to the boundary of this region the direction
%of $X$ is nonconstant, so there is no obstruction to extending $\nu$
%further over these components. 

This concludes the proof of Proposition~\ref{prop:fol-cycle}, and
hence of Theorem~\ref{thm:real-anal-stable}. 
\end{proof}

%\begin{proof}[Proof of Theorem~\ref{thm:real-anal-stable}]
%According to Lemma~\ref{lem:nu},
%there exist a closed $1$-form $\nu$ on an open neighbourhood $U$ of
%the union $N$ of the singular level sets of $h$ satisfying $\nu(X)>0$. 
%Consider a connected component $V$ of $M\setminus N$. By the
%Arnold-Liouville theorem, $V$ is diffeomorphic to $(a,d)\times T^2$
%such that $h(r,x)=r$ and the vector field $X$ is constant on each
%torus $\{r\}\times T^2$. If the direction of the vector field $X$ is 
%constant in $r$, then $c_r(\nu)=1$ for $r$ near $a$ or $d$ by
%Proposition~\ref{prop:fol-cycle}. Hence Proposition~\ref{prop:ext}
%provides an extension of $\nu$ to a closed $1$-form on $V$ with
%$\nu(X)>0$. 
%Performing this for all components $V$ concludes the proof of
%Theorem~\ref{thm:real-anal-stable}. 
%\end{proof}

%%%%%%%%%%%%%%%%%%%%%%%%%%%%%%%%%%%%%%%%%%%%%%%%%%%%%%%%%%%%%%%%%%%%%
\section{Construction of a smooth counterexample}\label{sec:counterex}
%%%%%%%%%%%%%%%%%%%%%%%%%%%%%%%%%%%%%%%%%%%%%%%%%%%%%%%%%%%%%%%%%%%%%

In this section we prove Proposition~\ref{prop:counterex} from the
Introduction by constructing a smooth counterexample. 

Consider the upper half plane $\HHH\subset \C$ with its standard
hyperbolic metric (of constant curvature $-1$) and the isometric
action of the M\"obius transformations $PSL(2,\R)$. Let $\Gamma\subset
PSL(2,\R)$ be a lattice so that $\Sigma=\HHH/\Gamma$ is a closed 
hyperbolic surface. Let $\lambda$ be the canonical contact $1$-form
(restriction of the Liouville form $p\,dq$) on the unit cotangent
bundle $S^*\Sigma$, whose Reeb vector field $R$ defines the geodesic
flow for the hyperbolic metric on $\Sigma$.  

{\bf Standardizing $\lambda$ near a closed Reeb orbit. }
We first derive a (partial) standard form for $\lambda$ near a closed
Reeb orbit $\gamma$ on $S^*\Sigma$. 

\begin{lemma}\label{lem:normform}
Each closed Reeb orbit $\gamma\subset S^*\Sigma$ possesses a tubular
neighbourhood $U\cong D\times S^1$ with coordinates $(x,y)$ in the
disk $D=\{x^2+y^2<\eps\}$ and $z$ in $S^1=\R/\Z$ along
$\gamma\cong\{0\}\times S^1$ in which $\lambda$ has the form
$$
   \lambda = (T_0+y^2-x^2)dz+\lambda_D,
$$
where $T_0=\int_\gamma\lambda>0$ and $\lambda_D$ is a $1$-form on $D$
(depending only on $x$ and $y$). 
\end{lemma}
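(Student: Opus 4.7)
The plan is to use the identification $S^*\Sigma \cong \Gamma \backslash \mathrm{PSL}(2,\R)$, under which $\lambda$ pulls back to a left-invariant $1$-form on $\mathrm{PSL}(2,\R)$. With the basis $e_0 = \mathrm{diag}(1/2,-1/2)$, $e_+ = E_{12}$, $e_- = E_{21}$ of $\mathrm{sl}(2,\R)$ satisfying $[e_0,e_\pm]=\pm e_\pm$ and $[e_+,e_-]=2e_0$, the unit-speed geodesic flow is generated by the left-invariant vector field $e_0^L$, so $\lambda$ agrees with the dual left-invariant $1$-form $\theta^0$. After conjugation in $\mathrm{PSL}(2,\R)$, I may assume that the closed orbit $\gamma$ of period $T_0 = \int_\gamma \lambda$ corresponds to the hyperbolic element $\exp(T_0 e_0) \in \Gamma$.

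I then parametrize a neighbourhood of the identity coset by the Iwasawa-style map $(x,y,z)\mapsto [\exp(xe_+)\exp(ye_-)\exp(T_0 z e_0)]$ in $\Gamma\backslash \mathrm{PSL}(2,\R)$. A direct Maurer-Cartan computation, using $\mathrm{Ad}(\exp(-T_0 z e_0))e_\pm = e^{\mp T_0 z}e_\pm$ together with the finite expansion $\mathrm{Ad}(\exp(-ye_-))e_+ = e_+ + 2ye_0 - y^2 e_-$ (truncated because $\mathrm{ad}_{e_-}^3 e_+ = 0$), picks out the $e_0$-component of $g^{-1}dg$ as $\lambda = 2y\,dx + T_0\,dz$. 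These $(x,y,z)$ do not globalize over $S^1$: in the quotient one finds $(x,y,z+1)\sim(xe^{-T_0},ye^{T_0},z)$, which is precisely the hyperbolic return map with eigenvalues $e^{\pm T_0}$. Passing to the twisted coordinates $\tilde x = xe^{T_0 z}$, $\tilde y = ye^{-T_0 z}$, which are invariant under this identification, gives genuine coordinates on $D\times S^1$ in which
$$
   \lambda = (T_0 - 2T_0\,\tilde x\,\tilde y)\,dz + 2\tilde y\,d\tilde x.
$$

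A final linear change $\tilde x = \alpha(x-y)$, $\tilde y = \beta(x+y)$ with $\alpha\beta = 1/(2T_0)$ converts $-2T_0\tilde x\tilde y$ into $y^2 - x^2$ and turns the transverse part $2\tilde y\,d\tilde x$ into $(x+y)(dx-dy)/T_0$, a $1$-form depending only on $x$ and $y$. This yields $\lambda = (T_0 + y^2 - x^2)\,dz + \lambda_D$ with $\lambda_D = (x+y)(dx-dy)/T_0$, as required. The main technical care lies in fixing conventions so that the canonical Liouville form on $S^*\Sigma$ indeed corresponds to $\theta^0$ for $e_0 = \mathrm{diag}(1/2,-1/2)$ (ensuring the Reeb flow is unit-speed geodesic) and that $T_0$ equals the length of the underlying closed geodesic. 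The structural point I expect is that the hyperbolic quadratic form $y^2 - x^2$ here appears not as a Taylor expansion but as an \emph{exact} normal form: this rigidity reflects the global linearization of the return map by the horocyclic nilpotent flows generated by $e_\pm^L$, a feature special to constant negative curvature.
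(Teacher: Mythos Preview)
Your argument is correct and takes a genuinely different route from the paper's. The paper argues more abstractly: it observes that the $1$-parameter subgroup $G\subset PSL(2,\R)$ translating along the lifted geodesic $\tilde\gamma$ descends to a free $S^1$-action on a tubular neighbourhood of $\gamma$ in $S^*\Sigma$ preserving $\lambda$, so in $S^1$-invariant coordinates one automatically has $\lambda = H(x,y)\,dz + \lambda_D$. The Reeb condition forces $H(0,0)=T_0$ and $dH(0,0)=0$; the paper then \emph{cites} hyperbolicity of closed geodesics on a hyperbolic surface to conclude that $(0,0)$ is an index-$1$ critical point, and invokes the Morse Lemma to reduce $H$ to $T_0 + y^2 - x^2$. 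Your approach instead computes $\lambda$ explicitly from the left-invariant coframe on $\Gamma\backslash PSL(2,\R)$ and obtains not just the statement but an explicit $\lambda_D$. This buys you two things the paper has to import: you never need to quote hyperbolicity of closed geodesics (the eigenvalues $e^{\pm T_0}$ of the return map are visible in your identification $(x,y,z+1)\sim(xe^{-T_0},ye^{T_0},z)$), and you never invoke the Morse Lemma (the quadratic form is exact by construction, as you correctly emphasise). The price is that you must track conventions carefully---the identification of the Liouville form with $\theta^0$, the sign in the twisted coordinates, and the orientation of $d\lambda_D$ are all sensitive to choices---whereas the paper's qualitative argument is robust to such bookkeeping. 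Both arguments rest on the same $S^1$-symmetry coming from the hyperbolic translation along $\tilde\gamma$; yours makes it algebraic and explicit, the paper's keeps it geometric and soft.
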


\begin{proof} 
Consider the closed geodesic $\bar\gamma$ on $\Sigma$ corresponding to
the Reeb orbit $\gamma$. Let $\tilde\gamma$ be a lift of
$\bar\gamma$ to $\HHH$. 
The geodesic $\tilde\gamma\subset\HHH$ defines a $1$-parameter 
subgroup $G$ of the isometry group $PSL(2,\R)$ as follows. 
We pick a parametrization $\tilde\gamma(t)$ of $\tilde\gamma$ by
arclength and oriented according to $R$.  
For each $t\in\R$ there exists a unique $\phi_t\in PSL(2,\R)$
preserving $\tilde\gamma$ and mapping $\tilde\gamma(0)$ to
$\tilde\gamma(t)$. Since $\phi_{s+t}$ and $\phi_s\circ\phi_t$ both
preserve $\tilde\gamma$ and map $\gamma(0)$ to $\gamma(s+t)$, they are
equal, so $G:=\{\phi_t\mid t\in\R\}$ is a $1$-parameter subgroup of
$PSL(2,\R)$. 
%differential $D_ph$ maps the tangent vector to $\tilde\gamma$ at $p$
%to the one at $q$
%So when we speak of ``the tangent vector to $\tilde\gamma$'' at a
%point, we will mean the one induced by this parametrization and orientation. 
%Fix a point $p\in \tilde\gamma$. For any point $q\in \tilde\gamma$, there
%exists a unique $h\in PSL(2,\R)$ such that $h(p)=q$ and the
%differential $D_ph$ maps the tangent vector to $\tilde\gamma$ at $p$
%to the one at $q$. 
%Letting $q$ run through the whole of $\tilde\gamma$, we obtain a
%$1$-parameter subgroup $G$ of $PSL(2,\R)$. 
The fact that 
$\tilde\gamma$ is the lift of a closed geodesic $\bar\gamma$ means that 
$$
   \Gamma_G:=\Gamma\cap G
$$
is a lattice in $G\cong\R$.

Since the projection $\HHH\to\Sigma$ is a covering and $\bar\gamma$ is
closed, each point on $\tilde\gamma$ has a neighbourhood that does not
intersect any other lift of $\bar\gamma$. By a compactness argument,
this implies that for $\eps>0$ suffiently small the strip 
$$
   \wt N:=\{p\in \HHH \mid \dist(p,\tilde\gamma)<\eps\}
$$
around $\tilde\gamma$ does not intersect any other lift of
$\bar\gamma$. After shrinking $\eps$ further, we may thus assume that
whenever any two points $p_1$ and $p_2$ in $\wt N$ are related by an
element $g\in \Gamma$ we have $g\in G$. Let the expressions
``to the left/right of $\tilde\gamma$'' have the obvious meaning. 
For $\delta\in (-\eps,\eps)$ we define the curves
$$
   \tilde\gamma_\delta:=\{p\in \HHH \mid \dist(p,\tilde\gamma)=|\delta| \text{
     and }p\text{ lies to the left/right of }\tilde\gamma\},
$$
where we choose ``left'' for $\delta<0$ and ``right'' for $\delta>0$,
and $\tilde\gamma_0:=\tilde\gamma$. 
Since $G$ acts by orientation preserving isometries, the curves 
$\tilde\gamma_\delta$ are orbits of $G$. Thus
$\wt N\cong(-\eps,\eps)\times\R$ is a $G$-invariant set on which
$G\cong\R$ acts by translation in the $\R$-direction. It follows that
the annulus 
$$
   N := \wt N/\Gamma= \wt N/\Gamma_G\cong (-\eps,\eps)\times
   S^1\subset \Sigma
$$
inherits an isometric action of $G/\Gamma_G\cong S^1$ by translations in
the $S^1$-direction. Consider the preimage $\pi^{-1}(N)\cong
(-\eps,\eps)\times S^1\times S^1$ of $N$ in the $S^1$-bundle
$\pi:S^*\Sigma\to\Sigma$. The isometric $S^1$-action on $N$ lifts (by
taking differentials) to an $S^1$-action on $\pi^{-1}(N)$ preserving
the canonical $1$-form $\lambda$. Since the action is free, there
exists a tubular neighbourhood $U\cong D\times S^1$ of
$\gamma\cong\{0\}\times S^1$ in $\pi^{-1}(N)$ with coordinates $(x,y)$
in the disk $D=\{x^2+y^2<\eps\}$ (for a different $\eps>0$) and $z$ in
$S^1=\R/\Z$ in which the action is given by translation in the
$S^1$-direction. Since the $1$-form $\lambda$ on $U$ is
$S^1$-invariant, we can write it uniquely as
$$
   \lambda=H(x,y)dz+\lambda_D
$$
for some function $H$ and some $1$-form $\lambda_D$ on $D$. Since
$\gamma$ is a Reeb orbit, the Reeb vector field $R$ along $\gamma$ is
given by $R(0,0,z)=T_0^{-1}\p_z$ for some constant $T_0>0$. The conditions
$\lambda(R)=1$ and $i_Rd\lambda=0$ along $\gamma$ now imply that 
$$
   H(0,0)=T_0,\qquad dH(0,0)=0.
$$
In particular, $\int_\gamma\lambda=T_0$. Now each closed orbit $\gamma$
for the geodesic flow on a hyperbolic surface is hyperbolic 
(see e.g.~\cite{HK95}). 
This translates into $H$ having a nondegenerate critical point of
index $1$ at $(0,0)$ (see the discussion in the next paragraph). So by
the Morse Lemma we can choose coordinates $(x,y)$ in which  
$$
   H(x,y)=T_0+y^2-x^2.
$$
\end{proof}

{\bf Reeb dynamics for $S^1$-invariant contact forms on $D\times
  S^1$. }
Consider more generally an $S^1$-invariant contact form on $U=D\times
S^1$ given by  
$$
   \lambda=H(x,y)dz+\lambda_D,\qquad H(x,y)>0,\qquad dH(0,0)=0.
$$
For sufficiently small $D$, the contact condition is equivalent to
$d\lambda_D$ being a positive area form on $D$. Since the Reeb vector
field $R$ is $S^1$-invariant, we can write it uniquely as 
$$
   R = T(x,y)^{-1}\p_z + R_D   
$$ 
for some positive function $T$ and some vector field $R_D$ on $D$. 
We compute
$$
   d\lambda = dH\wedge dz+d\lambda_D,\qquad i_Rd\lambda = -T^{-1}dH +
   i_{R_D}d\lambda_D,
$$
so the condition $i_Rd\lambda=0$ translates into
$$
   dH = T\,i_{R_D}d\lambda_D. 
$$
In other words, $R_D$ is the Hamiltonian vector field of the
autonomous Hamiltonian function $H$ with respect to the symplectic
form $-T\,d\lambda_D$ on $D$. Writing $w=(x,y)\in D$, the equations for the
Reeb flow become
$$
   \dot w=R_D(w),\qquad \dot z=T(w)^{-1}. 
$$
Thus the Reeb flow projects onto the Hamiltonian flow of $H$ on $D$,
and the $z$-component can be integrated to
$z(t)=z(0)+T(w)^{-1}t$. Since $z$ lives in $\R/\Z$, we see that the
Reeb orbit starting at time $t=0$ at $(w,0)$ returns to the slice
$D\times\{0\}$ at time $T(w)$ and the Poincar\'e return map on
$D\times\{0\}$ is given by
$$
   \phi(w) = \psi_{T(w)}(w), 
$$
where $\psi_t$ is the flow of $R_D$ on $D$ (which may run out of $D$)
and $T(w)$ is the return time. Note that $dH(0,0)=0$ implies
$R_D(0,0)=0$, so $(0,0)\in D$ is a fixed point of $\phi$. The linearization
of $\phi$ at $(0,0)$ is given by
$$
   D\phi(0,0)\cdot w = D\psi_{T_0}(0,0)\cdot w + (DT(0,0)\cdot w)\,R_D(0,0) =
   D\psi_{T_0}(0,0)\cdot w,
$$
where $T_0:=T(0,0)=H(0,0)>0$. Thus $D\phi(0,0) = D\psi_{T_0}(0,0)$ is the time
$T_0$ map of the linearized Hamiltonian flow
$$
  \dot w = JS\cdot w,\qquad J=\begin{pmatrix}0 & -1 \\ 1 & 0\end{pmatrix}
$$
where $S$ is the Hessian of $H$ at $(0,0)$. A short computation shows
that the eigenvalues $\tau$ of $JS$ satisfy $\tau^2+\det S=0$. Hence the
Reeb orbit $\gamma=\{(0,0)\}\times S^1$ is
\begin{itemize}
\item degenerate (at least one eigenvalue is zero) iff $\det S=0$, i.e., $(0,0)$ is
  a degenerate critical point of $H$;
\item hyperbolic (both eigenvalues are real and nonzero) iff $\det S<0$, i.e., $(0,0)$ is
  a nondegenerate critical point of $H$ of index $1$;
\item elliptic (both eigenvalues are imaginary and nonzero) iff $\det S>0$, i.e., $(0,0)$ is
  a nondegenerate critical point of $H$ of index $0$ or $2$. 
\end{itemize} 
This fills in the argument used in the proof of
Lemma~\ref{lem:normform}. For later use, let us record the following
consequences of our discussion:
\begin{enumerate}
\item Closed Reeb orbits in $U$ are of the form $\{w\}\times S^1$ for
  critical points $w$ of $H$.
\item  Invariant tori for the Reeb flow in $U$ are of the form
  $H^{-1}(c)\times S^1$, where the level sets $H^{-1}(c)$ of $H$ are diffeomorphic to
  the circle. 
\end{enumerate}
In particular, if $\gamma=\{(0,0)\}\times S^1$ is hyperbolic and $D$
sufficiently small, then $U$ contains no closed orbits except $\gamma$
and no invariant tori. 
\medskip

{\bf Modifying $\lambda$ near a closed Reeb orbit. }
Now we turn back to the neighbourhood $U$ of a closed Reeb orbit
$\gamma$ described in Lemma~\ref{lem:normform}. 
Let $\chi:[0,\eps]\to[0,1]$ 
be a monotone cutoff function which equals
$0$ near $0$ and $1$ near $\eps$.
% and $r\chi'(r)\le \eps$ 
%for some small $\eps>0$ (do not confuse with $\eps$ in the proof of
%the lemma). 
Set $r^2:=x^2+y^2$. We modify the contact form $\lambda$ on $U$ to
\begin{equation}\label{eq:tilde}
   \lambda_\chi:=H_\chi(x,y)dz+\lambda_D,\qquad
   H_\chi(x,y):=T_0+\chi(r^2)(y^2-x^2). 
\end{equation}
By the discussion above, $\lambda_\chi$ is a contact form. Since it
agrees with $\lambda$ near the boundary of $U$, we can extend it by
$\lambda$ to a contact form on $S^*\Sigma$ that we will still denote
by $\lambda_\chi$. 
% Note that $\lambda_\chi$ is a contact form either by direct computation or by
%$C^1$ closeness to $\lambda$ (and also $dz+\lambda_D$). 
Note that on the region $\{\chi=0\}$ the form $\lambda_\chi$ agrees
with $T_0dz+\lambda_D$, so its Reeb flow is periodic moving in the
$z$-direction (in particular, it is integrable) on this region. On the
other hand, we have

\begin{lemma}\label{lem:non-int}
Every smooth integral of motion for the Reeb flow of $\lambda_\chi$ on
$S^*\Sigma\setminus\{\chi=0\}$ is constant. 
\end{lemma}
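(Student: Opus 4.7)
Let $\Omega\subset S^*\Sigma$ denote the open set on which $\lambda_\chi=\lambda$ --- explicitly, the union of $S^*\Sigma\setminus\overline U$ with the interior of $\{(x,y,z)\in U:\chi(x^2+y^2)=1\}$. On $\Omega$ the Reeb vector field $R_\chi$ coincides with the geodesic vector field $R$ of the hyperbolic metric, so any integral of motion $F$ for $R_\chi$ restricts to a smooth $R$-invariant function on $\Omega$. The strategy has two steps: (i) every $R_\chi$-orbit in $S^*\Sigma\setminus\{\chi=0\}$ meets $\Omega$, so by $R_\chi$-invariance the value of $F$ on $S^*\Sigma\setminus\{\chi=0\}$ is determined by $F|_\Omega$; (ii) every smooth $R$-invariant function on $\Omega$ is constant.

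For (i) I would use the analysis of $\lambda_\chi$ preceding the lemma. Inside $U\cap\{\chi>0\}$, $R_\chi$-orbits project to level sets of $H_\chi$ in $D$. Writing $H_\chi=T_0-r^2\chi(r^2)\cos(2\theta)$ in polar coordinates and choosing $\chi$ monotone nondecreasing, the function $r^2\chi(r^2)$ is strictly increasing on $\{\chi>0\}$, so $H_\chi^{-1}(c)\cap\{\chi>0\}$ consists either of two arcs from $\partial D$ to $\partial D$ (when $c\ne T_0$), or of the four separatrix rays of $\gamma$ at $\theta\in\{\pi/4,3\pi/4,5\pi/4,7\pi/4\}$ (when $c=T_0$). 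In either case the orbit exits $U$ at $\partial D\subset\Omega$.

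For (ii) I would invoke the Anosov structure of the geodesic flow: on a closed hyperbolic surface, $R$ is Anosov with smooth weak-stable/unstable 2-distributions $E^{cs},E^{cu}$, and the strong horocycle foliations are minimal in $S^*\Sigma$ by Hedlund's theorem. For $p\in\Omega$, $R$-invariance of $F$ yields $dF_p(v)=dF_{\phi^{-t}_R(p)}(D\phi^{-t}_R(v))$ for $v\in E^u_p$ and any $t$ such that the backward $R$-orbit segment $\phi^{[-t,0]}_R(p)$ lies in $\Omega$; since $\|D\phi^{-t}_R(v)\|\le Ce^{-\lambda t}\|v\|$, letting $t\to\infty$ through such admissible times forces $dF_p|_{E^u_p}=0$, and symmetrically $dF_p|_{E^s_p}=0$. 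Combined with $dF_p(R)=0$ this gives $dF_p=0$, so $F|_\Omega$ is locally constant, hence constant on each connected component of $\Omega$; the different components are then identified via $R_\chi$-orbits as in (i).

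The main obstacle is step (ii): the backward $R$-orbit of a generic $p\in\Omega$ leaves $\Omega$ and re-enters, so the admissible times $t$ are a priori bounded and the naive Anosov pushforward argument breaks down. The remedy is to exploit the \emph{global} $R_\chi$-invariance of $F$ in order to bridge the excursions into $U_0:=U\cap\{\chi<1\}$: during such an excursion, the $R_\chi$-flow (rather than the $R$-flow) preserves $F$ and, by the integrable analysis of $H_\chi$ inside $U_0$, scatters the point back to $\Omega$ in bounded time. Choosing a sequence of return times $t_n\to\infty$ along which the trajectory lies in $\Omega$ for time windows of controlled length, one shows that the exponential contraction on $E^u$ along the $\Omega$-segments dominates the bounded scattering error accumulated during excursions; making this quantitative Anosov estimate precise is the technical heart of the proof.
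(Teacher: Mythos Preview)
Your step (i) is sound and mirrors the paper's own computation: one checks that the only critical level of $H_\chi$ is $\{H_\chi=T_0\}$, so every level curve of $H_\chi$ in $D\setminus\{\chi=0\}$ runs out to $\partial D$, and hence every $R_\chi$-orbit in $U\setminus\{\chi=0\}$ reaches the region $\Omega$ where $\lambda_\chi=\lambda$.

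The gap is in step (ii). The Anosov splitting $E^s\oplus\R R\oplus E^u$ is attached to the geodesic flow $R$, and there is no reason for the linearized $R_\chi$-flow to respect it during an excursion through $U_0=U\cap\{\chi<1\}$: after such an excursion the image under $D\phi^t_{R_\chi}$ of a vector in $E^u_p$ will in general have a nonzero $E^s$-component at the exit point, so you cannot simply resume the exponential contraction estimate on the next $\Omega$-segment. Since excursions through $U_0$ recur with bounded gap along typical orbits, a uniform per-excursion distortion bound does not by itself imply that ``contraction dominates scattering''; turning your sketch into a proof would require a genuine cocycle or cone-field argument controlling how the splitting (or a cone around $E^u$) is transported through $U_0$, and nothing you have written provides this. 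As it stands, the argument does not close.

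The paper bypasses the difficulty entirely with a much softer argument. From the level-set analysis it reads off a one-to-one correspondence between the level curves of $H$ and those of $H_\chi$ in $D$ (they agree near $\partial D$, and the only change is that the saddle of $H$ at the origin is replaced by the critical disk $\{\chi=0\}$ of $H_\chi$), hence a one-to-one correspondence between $R$-invariant tori and $R_\chi$-invariant tori in $S^*\Sigma\setminus\{\chi=0\}$. Now argue by contradiction: a nonconstant smooth integral for $R_\chi$ would, by the Arnold--Liouville theorem, produce an open region foliated by $R_\chi$-invariant $2$-tori, and via the correspondence an open region foliated by $R$-invariant $2$-tori. This contradicts the ergodicity of the geodesic flow on a closed hyperbolic surface. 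No differential estimate along orbits is needed---only the qualitative fact that an ergodic flow admits no open set foliated by invariant tori.
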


\begin{proof}
Let us first determine the critical points of $H_\chi$ on $D$. At a
critical point we have
\begin{align*}
   0 = \p_xH_\chi &= 2x\chi'(r^2)(y^2-x^2)-2x\chi(r^2), \cr
   0 = \p_yH_\chi &= 2y\chi'(r^2)(y^2-x^2)+2y\chi(r^2).
\end{align*}
Subtracting $x/2$ times the first equation from $y/2$ times the second
we obtain
$$
   0 = (y^2-x^2)^2\chi'(r^2) + r^2\chi(r^2). 
$$
Since all terms on the right hand side are nonnegative, this is only
possible for $\chi(r^2)=0$. So the only critical level set of $H_\chi$
is $\{H_\chi=T_0\}$. It consists of the disk $\{\chi=0\}$ of critical
points together with the $4$ lines $\{x=\pm y\}\setminus\{\chi=0\}$ of
regular points. Since $H_\chi$ agrees with $H$ near $\p D$, we obtain
a one-to-one correspondence between the level sets of $H$ and those of
$H_\chi$, the only difference being that the critical point of $H$ at
the origin has been replaced by the critical disk $\{\chi=0\}$ for
$H_\chi$. In view of consequences (i) and (ii) above, this implies a
one-to-one correspondence between invariant tori of $H$ and those
of $H_\chi$ outside the disk $\{\chi=0\}$. 

Suppose now that $f:S^*\Sigma\setminus\{\chi=0\}\to\R$ is a
nonconstant smooth integral of motion for the Reeb flow of
$\lambda_\chi$. By the Arnold-Liouville
theorem~\cite{Arn66}, this gives us an open region in
$S^*\Sigma\setminus\{\chi=0\}$ that is foliated by invariant $2$-tori
for the Reeb flow of $\lambda_\chi$. By the preceding discussion, this
gives rise to an open region in $S^*\Sigma\setminus\{\chi=0\}$
foliated by invariant $2$-tori for the Reeb flow of $\lambda$. But
this contradicts ergodicity of the geodesic flow on $S^*\Sigma$ 
(see e.g.~\cite{HK95}),
hence such an integral of motion cannot exist. 
\end{proof}

{\bf Modifying $\lambda$ near $4$ closed Reeb orbits. }
Now we perform the preceding construction at $4$ closed Reeb orbits as
follows. Suppose that $\Sigma$ has genus at least $3$. 
Pick four disjoint simple closed geodesics $\bar\gamma_i$
on $\Sigma$ with corresponding closed Reeb orbits $\gamma_i$ and
periods $T_i=\int_{\gamma_i}\lambda$, $i=1,\dots,4$. (Their precise
choice will be fixed later.) 
%they have different lengths, lie in different free homotopy classes,
%but in the same homology class. Then the corresponding Reeb orbits
%$\gamma_1$ and $\gamma_2$ in $\tilde M$ have different
%$\lambda$-lengths and the same homology class. 
Let $U_i\cong D_i\times S^1$ be tubular neighbourhoods of $\gamma_i$ as
in Lemma~\ref{lem:normform} (with $T_0$ replaced by $T_i$ and some
$\eps_i>0$). We modify $\lambda$ as described in~\eqref{eq:tilde} on
each $U_i$, using some cutoff functions $\chi_i$, to obtain a new
contact form $\tilde\lambda$ on $S^*\Sigma$. (We could actually
take the same $\eps>0$ and the same cutoff function $\chi$ for all
$i$, but it will not matter for the following construction.)  
Recall that in the canonical coordinates $(x,y,z)$ on each region
$$
   V_i:=\{\chi_i=0\}\subset U_i
$$ 
the form $\tilde\lambda$ agrees with
$T_idz+\lambda_{D_i}$ for some $1$-form $\lambda_{D_i}$ on the disk
$D_i=\{x^2+y^2<\eps_i\}$ such that $d\lambda_{D_i}$ is a positive area
form. 
%Record for later use that
%\begin{equation}\label{eq:T}
%   \int_{\gamma_i}\tilde\lambda = T_i. 
%\end{equation}
After a modification of $\lambda_{D_i}$ on $\{\chi_i=0\}\subset D_i$, keeping it
fixed near the boundary of $\{\chi_i=0\}$, we may assume that 
%In complete analogy with the situation there can be no invariant tori
%except those in $\{\chi_1=1\}$ or $\{\chi_2=1\}$. We perform the
%following minor the HS $d\tilde\lambda$ inside
%$V_1:=\{\chi_1=1\}\subset U_1$ and $V_2:=\{\chi_2=1\}\subset U_2$
%keeping the same Reeb direction in the following way. We modify (in
%compactly supported way) the area form $d\lambda_D$ on $D$ so that 
$$
   d\lambda_{D_i} = dr\wedge d\phi
$$
in polar coordinates $(r,\phi)$ corresponding to the Cartesian
coordinates $(x,y)$ on some annulus 
$$
   A_i:=\{r_i\le r\le r_i+\delta\}\subset \{\chi_i=0\}\subset D_i.
$$
(Note that we use the same $\delta>0$ for all $i=1,\dots,4$.)
We keep denoting the resulting contact form by $\tilde\lambda$. Note
that this modification of $\lambda_{D_i}$ does not change the Reeb
vector field $\tilde R$ of $\tilde\lambda$, which is still given by
$T_i^{-1}\p_z$ on $V_i$. Recall that a {\em stabilizing $1$-form} for
the vector field $\tilde R$ is a $1$-form $\nu$ satisfying $i_{\tilde
  R}d\nu=0$ and $\nu(\tilde R)>0$. The following lemma shows that such
forms must be very special.  

\begin{lemma}\label{lem:c}
Let $\nu$ be a stabilizing $1$-form for $\tilde R$ on
$S^*\Sigma\setminus(V_1\cup\cdots\cup V_4)$. Then 
$$
   \nu = c\tilde\lambda + \beta
$$
for some constant $c\ne 0$ and some closed $1$-form $\beta$ on
$S^*\Sigma\setminus(V_1\cup\cdots\cup V_4)$. 
\end{lemma}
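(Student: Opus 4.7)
The plan is to identify $d\nu$ with a multiple of $d\tilde\lambda$, show by invariance that the coefficient is a constant via Lemma \ref{lem:non-int}, and then separately argue that this constant is nonzero. Both $d\nu$ and $d\tilde\lambda$ are closed $2$-forms killed by $i_{\tilde R}$ (the former by the stabilizing hypothesis, the latter because $\tilde R$ is the Reeb field of $\tilde\lambda$). The contact condition implies that the space of $2$-forms pointwise annihilated by $i_{\tilde R}$ is $1$-dimensional and spanned by $d\tilde\lambda$, or equivalently, that $\alpha \wedge d\tilde\lambda = \alpha(\tilde R)\,\tilde\lambda \wedge d\tilde\lambda$ for every $1$-form $\alpha$. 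Hence there is a unique smooth function $g$ on $M' := S^*\Sigma \setminus (V_1\cup\cdots\cup V_4)$ with $d\nu = g\,d\tilde\lambda$.

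Differentiating and using $d^2\nu=0$ gives $dg \wedge d\tilde\lambda = 0$, which by the displayed pointwise identity translates into $\tilde R(g)=dg(\tilde R)=0$. So $g$ is a smooth integral of motion for the Reeb flow of $\tilde\lambda$ on $M'$. The proof of Lemma \ref{lem:non-int} goes through unchanged for the simultaneous modification near four closed orbits, since the flow of $\tilde R$ still coincides with the ergodic geodesic flow outside $\bigcup U_i \subset M'$, and any open region of invariant $2$-tori in $M'$ would pull back to such a region for the original geodesic flow. Thus $g \equiv c$ for a single constant $c \in \R$, and $\beta := \nu - c\tilde\lambda$ satisfies $d\beta = d\nu - c\,d\tilde\lambda = 0$, yielding the decomposition $\nu = c\tilde\lambda + \beta$ with $\beta$ closed.

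It remains to show $c \ne 0$. Assume for contradiction that $c = 0$, so $\nu = \beta$ is closed on $M'$ with $\beta(\tilde R) > 0$ everywhere. I will produce a closed Reeb orbit $\delta\subset M'$ whose class in $H_1(M';\Q)$ vanishes; Stokes' theorem then forces $\int_\delta \beta = 0$, contradicting the strict positivity $\int_\delta \beta = \int_0^T \beta(\tilde R(\delta(t)))\,dt > 0$. The candidate $\delta$ is the lift of a closed geodesic $\bar\delta \subset \Sigma$ representing a commutator in $\pi_1(\Sigma)$ and contained in the open set $\Sigma \setminus \bigcup \pi(U_i)$; density of closed orbits for the geodesic flow restricted to the invariant open set $S^*\Sigma \setminus \bigcup U_i$ (Anosov closing lemma), together with the hypothesis that $\Sigma$ has genus at least $3$, allows one enough freedom to choose $\bar\delta$ within the commutator subgroup. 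By construction $\pi_*[\delta] = 0$ in $H_1(\Sigma;\Q)$, so $[\delta]$ is torsion in $H_1(S^*\Sigma;\Z)$ (generated by the $\chi(\Sigma)$-torsion fiber class) and vanishes in $H_1(S^*\Sigma;\Q)$.

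The hard part is promoting this rational null-homology from $S^*\Sigma$ to $M'$: the inclusion $M' \hookrightarrow S^*\Sigma$ need not be injective on $H_1(-;\Q)$, its kernel being generated by meridians of the removed solid tori $V_i$. I would handle this either by explicitly constructing a bounding $2$-chain for $\delta$ in $S^*\Sigma$ and pushing it out of each $V_i$ using the product structure $V_i \cong D^2 \times S^1$ (which alters the boundary by meridian classes that can be arranged to cancel by choosing the $\bar\gamma_i$ in sufficiently general homological position), or by a more conceptual route using Schwartzman's asymptotic-cycle theory to rule out closed $1$-forms strictly positive on the Anosov Reeb flow. Either route completes the contradiction and yields $c \ne 0$.
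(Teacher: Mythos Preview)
Your argument for the decomposition $\nu = c\tilde\lambda + \beta$ with $\beta$ closed is correct and essentially identical to the paper's: write $d\nu = g\,d\tilde\lambda$ using that $\ker i_{\tilde R}$ on $2$-forms is spanned by $d\tilde\lambda$, observe that $g$ is flow-invariant, and invoke Lemma~\ref{lem:non-int} (extended to four orbits) to force $g$ to be constant.

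The argument for $c \neq 0$, however, has a genuine gap. You correctly identify the obstruction --- a rational null-homology of a closed Reeb orbit in $S^*\Sigma$ need not survive in $M' = S^*\Sigma \setminus (V_1 \cup \cdots \cup V_4)$, the discrepancy being measured by the meridian classes of the $V_i$ --- but neither of your two proposed fixes is actually carried out. The first (pushing an explicit $2$-chain off the $V_i$ and arranging cancellation of meridians ``by choosing the $\bar\gamma_i$ in sufficiently general homological position'') is problematic because the $\bar\gamma_i$ are already fixed by the construction; the second (Schwartzman asymptotic cycles) is plausible but considerably heavier machinery than the problem warrants, and you give no details.

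The paper bypasses this difficulty with a much simpler device. Rather than looking for a single null-homologous Reeb orbit, it takes a simple closed geodesic $\bar\gamma$ disjoint from $\bar\gamma_1,\dots,\bar\gamma_4$ (available since the genus is at least $3$) and considers the \emph{pair} of Reeb orbits $\gamma,\gamma'$ over $\bar\gamma$ with its two orientations. Rotating the unit tangent vector through angle $\pi$ at each point of $\bar\gamma$ yields an explicit cylinder in $S^*\Sigma$ with boundary $\gamma'+\gamma$; since this cylinder projects entirely onto $\bar\gamma \subset \Sigma\setminus(\bar\gamma_1\cup\cdots\cup\bar\gamma_4)$, it lies in $M'$, giving $[\gamma']=-[\gamma]$ in $H_1(M')$ directly --- no pushing of chains required. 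If $c=0$ then $\nu$ is closed, so $\int_{\gamma'}\nu=-\int_\gamma\nu$, contradicting that both integrals are positive since $\nu(\tilde R)>0$. This two-orbit trick is the key idea you are missing.
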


\begin{proof}
Since $i_{\tilde R}d\nu=i_{\tilde R}d\tilde\lambda=0$, we can write
$d\nu=f\,d\tilde\lambda$ for a smooth function on
$S^*\Sigma\setminus(V_1\cup\cdots\cup V_4)$ which is invariant
under the flow of $\tilde R$. By Lemma~\ref{lem:non-int} (which
clearly continues to hold for $\lambda$ being modified near $4$ orbits
instead of just one), we must have $f\equiv c$ for some constant
$c\in\R$. It follows that $\beta:=\nu-c\,\tilde\lambda$ is closed. 

It remains to rule out the case $c=0$. 
Since $\Sigma$ has genus at least $3$, we find a simple
closed geodesic $\bar\gamma$ that is disjoint from the simple closed
geodesics $\bar\gamma_1,\dots,\bar\gamma_4$. Let $\gamma,\gamma'$ be
the closed Reeb orbits corresponding to $\bar\gamma$ and to
$\bar\gamma$ parametrized backwards, respectively. Rotating at each
point of $\bar\gamma$ the unit tangent vector to $\bar\gamma$ to its
opposite yields a cylinder in $S^*\sigma$ connecting $\gamma'$ to 
$-\gamma$ (the curve $\gamma$ oppositely oriented). Since the whole
cylinder projects onto
$\bar\gamma\subset\Sigma\setminus(\bar\gamma_1\cup\dots\cup\bar\gamma_4)$,
this shows that 
$$
   [\gamma']=-[\gamma]\in 
   H_1\Bigl(S^*\Sigma\setminus(\gamma_1\cup\dots\cup\gamma_4)\Bigr) \cong 
   H_1\Bigl(S^*\Sigma\setminus(V_1\cup\dots\cup V_4)\Bigr).
$$
Now if $c=0$, then $\nu=\beta$ would be closed and thus
$\int_{\gamma'}\nu=-\int_\gamma\nu$, contradicting the condition
(which follows from $\nu(\tilde R)>0$) that $\int_{\gamma'}\nu$ and
$\int_\gamma\nu$ must both be positive. 
\end{proof}

{\bf Cut and paste}
Now we will build a new manifold $M$ out of $S^*\Sigma$ by a cut and
paste construction. Let us represent a genus $5$ surface $\Sigma$ as
an iterated connected sum of $5$ tori in linear order (in the obvious
notation)  
\begin{align*}
   \Sigma := \Bigl(T^2\setminus D_1^+\Bigr)
   &\cup_{\bar\gamma_1}\Bigl(T^2\setminus(D_1^-\cup D_2^+)\Bigr) 
   \cup_{\bar\gamma_2}\Bigl(T^2\setminus(D_2^-\cup D_3^+)\Bigr) \cr 
   &\cup_{\bar\gamma_3}\Bigl(T^2\setminus(D_3^-\cup D_4^+)\Bigr) 
   \cup_{\bar\gamma_4}\Bigl(T^2\setminus D_4^-\Bigr). 
\end{align*}
Note that the gluing curves $\bar\gamma_i$,
$i=1,\dots,4$, on $\Sigma$ are homologous and appear as boundary
curves in a pair-of-pants decomposition of $\Sigma$. In view of the
Fenchel-Nielsen coordinates on Teichm\"uller space 
(see e.g.~\cite{Hu97}),
there exist hyperbolic metrics on $\Sigma$ making the $\bar\gamma_i$
geodesics with arbitrarily prescribed positive values of their lengths
$T_1,\dots,T_4$. We choose a hyperbolic metric such that
\begin{equation}\label{eq:T-diff}
   T_2-T_1 \neq T_4-T_3. 
\end{equation}
Note that the lengths $T_i$ are equal to the periods
$\int_{\gamma_i}\lambda$ of the corresponding closed Reeb orbits
$\gamma_i$ in $S^*\Sigma$. 
We modify $\lambda$ near $\gamma_1,\dots,\gamma_4$ to the contact from
$\tilde\lambda$ as above. Recall that 
$$
   d\tilde\lambda = d\lambda_{D_i} = dr\wedge d\phi
$$ 
in coordinates $(r,\phi,z)$ on the regions 
$$
   A_i\times S^1\subset V_i,\qquad A_i:=\{r_i\le r\le
   r_i+\delta\}\subset \{\chi_i=0\}\subset D_i.
$$
We cut out from $S^*\Sigma$ the four solid tori $\{r < r_i\}\subset
V_i$. Then we glue the collar neighbourhoods of the boundary tori of the
resulting manifold pairwise via the orientation preserving
diffeomorphism
$$
   \Phi_{12}:[r_1,r_1+\delta]\times S^1\times S^1\to[r_2,r_2+\delta]\times
   S^1\times S^1,\quad
   (r,\phi,z)\mapsto (r_2+\delta+r_1-r,-\phi,z),
$$
and similarly for $i=3,4$. This yields an oriented, closed manifold
$M$. Since the gluing maps preserve the area form $dr\wedge d\phi$,
the $2$-form $d\tilde\lambda$ descends to a nowhere vanishing closed
$2$-form $\om$ on $M$. 
%\marginpar{Is $\om$ exact?}
Now the following lemma concludes the proof of
Proposition~\ref{prop:counterex}. 

\begin{lemma}\label{lem:mainres}
There exists a nowhere vanishing vector field $X$ on the above
manifold $M$ generating $\ker\om$ with the following properties: 

(i) $X$ admits no stabilizing $1$-form (note that this is actually a
property of $\om$);

(ii) $X$ solves the stationary Euler
equations~\eqref{eq1},~\eqref{eq2} for some metric and volume form on
$M$.  
\end{lemma}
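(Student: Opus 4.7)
The plan is to establish the lemma in three stages, corresponding to constructing $X$, verifying the Euler equations, and ruling out stabilizing 1-forms.

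\emph{Construction of $X$ and verification of the Euler equations (part (ii)).} The kernel of $\om$ is $1$-dimensional; on $M\setminus(\CC_{12}\cup\CC_{34})$ (where $\CC_{ij}\subset M$ denotes the glued collar replacing the removed solid tori) it is spanned by $\tilde R$, while on each $\CC_{ij}$ in the $V_i$-coordinates $(r,\phi,z)$ (where $\om = dr\wedge d\phi$) it is spanned by $\p_z$. Since $\Phi_{ij}^*\p_z = \p_z$, setting $X = \tilde R$ off the collars and $X = g_{ij}(r)\p_z$ on $\CC_{ij}$ for a smooth positive $g_{ij}$ interpolating between $T_i^{-1}$ and $T_j^{-1}$ gives a smooth, nowhere vanishing $X$ generating $\ker\om$. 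For the Euler equations, by~\eqref{eq4} it suffices to find $\lambda$ with $\lambda(X)>0$ and $i_Xd\lambda = -dh$ for some smooth $h$; then any metric $\hat g$ with $i_X\hat g = \lambda$ and any volume form $\mu$ with $i_X\mu = \om$ realize $X$ as a solution of~\eqref{eq1}--\eqref{eq2}, the latter via $L_X\mu = d\om = 0$. Take $\lambda = \tilde\lambda$ off the collars (so $h$ equals a constant $h_0$ there) and on each $\CC_{ij}$ take a $T^2$-invariant form $\lambda = a_{ij}(r) dz + b_{ij}(r) d\phi$, with $a_{ij}$ interpolating from $T_j$ to $T_i$ (vanishing derivative at the ends) and $b_{ij}$ from $-(r_j+\delta)$ to $r_i+\delta$ (derivative $1$ at the ends), so that $\lambda$ matches $\tilde\lambda$ on the adjacent outer regions. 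Then $h'(r) = g_{ij}(r)a_{ij}'(r)$, and requiring $h\equiv h_0$ at both ends of the collar is the single scalar condition $\int_{r_i}^{r_i+\delta} g_{ij}\,a_{ij}'\,dr = 0$, easily satisfied by a non-monotone choice of $a_{ij}$.

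\emph{Non-existence of a stabilizing 1-form (part (i)).} Suppose $\nu$ is a stabilizing 1-form for $X$ on $M$. Lemma~\ref{lem:c}, applied to the open subset $S^*\Sigma\setminus(V_1\cup\cdots\cup V_4)\subset M$, gives $\nu = c\tilde\lambda + \beta_c$ for a constant $c\ne 0$ and a closed form $\beta_c$. On each $V_i$ and each $\CC_{ij}$, where $X$ is a positive multiple of $\p_z$, averaging $\nu$ over the $S^1_z$-action and using $i_Xd\nu = 0$ forces the $dz$-component of the averaged $\nu$ to be a constant; this constant is positive (from $\nu(X)>0$) and, by continuity across collar boundaries, takes a common value $\alpha_{12}$ on $V_1\cup\CC_{12}\cup V_2$, respectively $\alpha_{34}$ on $V_3\cup\CC_{34}\cup V_4$. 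For a closed orbit $\gamma_i^c = \{(x_0,y_0)\}\times S^1_z \subset V_i$ one gets $\int_{\gamma_i^c}\nu = \alpha_{ij}$ and $\int_{\gamma_i^c}\tilde\lambda = T_i$, so the period $b_i := \int_{\gamma_i^c}(\nu - c\tilde\lambda) = \alpha_{ij} - cT_i$ satisfies
\[
  b_1 - b_2 = c(T_2 - T_1), \qquad b_3 - b_4 = c(T_4 - T_3).
\]

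It remains to prove $b_1 - b_2 = b_3 - b_4$, which together with $c\ne 0$ will contradict $T_2-T_1\ne T_4-T_3$. This is the main obstacle. The idea is to push each $\gamma_i^c$ into $W_{\text{rest}} := S^*\Sigma\setminus\bigcup V_i$ (where $\beta_c = \nu - c\tilde\lambda$ is closed) and apply Stokes' theorem to a 2-chain bounded by the combination $(\gamma_1^c - \gamma_2^c) - (\gamma_3^c - \gamma_4^c)$. The iterated-connected-sum presentation of $\Sigma$ and the Gauss--Bonnet-type formula $[\gamma_i] = \chi(\Sigma_i^L)[f] \in H_1(S^*\Sigma)$, where $[f]$ is the $S^1$-fiber class and $\Sigma_i^L$ is the subsurface bounded to the left by $\bar\gamma_i$, yield the arithmetic progression $\chi(\Sigma_i^L) = -1,-3,-5,-7$, so $[\gamma_1]-[\gamma_2] = [\gamma_3]-[\gamma_4] = 2[f]$; hence the cycle is null-homologous in $S^*\Sigma$. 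The delicate part is lifting this null-homology from $S^*\Sigma$ to $W_{\text{rest}}$ (by carefully choosing the bounding 2-chain to avoid the $V_i$'s) and controlling the defect arising from the possible failure of $\nu - c\tilde\lambda$ to be closed on the $V_i$'s.
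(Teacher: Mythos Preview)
Your treatment of part~(ii) is correct and matches the paper's: define $X=g(r)\p_z$ and $\lambda=b(r)\,dz+\lambda_D$ on each glued collar, with the single scalar constraint $\int g\,b'\,dr=0$ satisfied by taking $b$ non-monotone.

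For part~(i), however, your argument stops exactly where the real work begins. You correctly reduce the problem to showing $b_1-b_2=b_3-b_4$, compute $[\gamma_1]-[\gamma_2]=[\gamma_3]-[\gamma_4]=2[f]$ in $H_1(S^*\Sigma)$, and then explicitly label the lift to $H_1(W_{\text{rest}})$ as ``the delicate part'' --- and end. That is a genuine gap: closedness of $\beta_c$ on $W_{\text{rest}}$ gives you nothing until the relation is known \emph{in that group}.

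The resolution is less delicate than you fear, and your ``defect'' worry is a red herring. First, your own averaging argument already shows that the periods $\int_\gamma\nu$ and $\int_\gamma\tilde\lambda$ are constant along the $z$-circles across each $V_i$, so replacing $\gamma_i^c$ by a push-off $\gamma_i'$ on $\p V_i\subset\overline{W_{\text{rest}}}$ changes nothing; once you work with the $\gamma_i'$, $\beta_c$ is closed where you need it and there is no defect. Second, do not look for a single $2$-chain bounding the combined cycle. Instead, observe that the $2$-chain witnessing $[\gamma_2]-[\gamma_1]+2[F]=0$ can be taken in $S^*\Sigma|_{\Sigma_{12}}$, where $\Sigma_{12}$ is the torus-with-two-holes between $\bar\gamma_1$ and $\bar\gamma_2$; since $\Sigma_{12}$ is disjoint from neighbourhoods of $\bar\gamma_3,\bar\gamma_4$, this $2$-chain already avoids $V_3,V_4$, and after push-off it avoids $V_1,V_2$ as well. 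Hence $[\gamma_2']-[\gamma_1']+2[F]=0$ holds in $H_1(W_{\text{rest}})$, and the symmetric argument over $\Sigma_{34}$ gives $[\gamma_4']-[\gamma_3']+2[F]=0$ with the \emph{same} fibre coefficient (because $\chi(\Sigma_{12})=\chi(\Sigma_{34})=-2$). Integrating $\beta_c$ over each relation yields $b_2-b_1=-2\int_F\beta_c=b_4-b_3$, which is precisely what you needed.
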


\begin{proof}
We will repeatedly use the following simple observation:
\begin{itemize}
\item[(O)] Let $\nu$ be
a $1$-form stabilizing a vector field $X$ generating the foliation by
circles $\{p\}\times S^1$ on a connected manifold $P\times S^1$ (in
our applications $P$ will be a disk or annulus). Then
$\int_{\{p\}\times S^1}\nu$ does not depend on $p\in P$. 
\end{itemize}
To see this,
connect two points $p,q\in P$ by a curve $\gamma$. The condition
$i_Xd\nu=0$ and Stokes' theorem now imply 
$$
0=\int_{\gamma\times
  S^1}d\nu = \int_{\{q\}\times S^1}\nu - \int_{\{p\}\times S^1}\nu.
$$ 
We first apply this observation to the solid tori $V_i\cong \{\chi_i=0\}\times
S^1$ with the contact form $\tilde\lambda=T_idz+\lambda_{D_i}$. Let us
pick points $w_i\in\p D_i$. Then the closed Reeb orbits
$\gamma_i':=\{w_i\}\times S^1$ satisfy
\begin{equation}\label{eq:T}
   \int_{\gamma_i'}\tilde\lambda = \int_{\gamma_i}\tilde\lambda =
   T_i. 
\end{equation}
Let now $(M,\om)$ be as in the lemma, and $X$ be a nowhere vanishing
vector field generating $\ker\om$. Consider in $M$ the regions
\begin{align*}
   M_{12} &:= (V_1\setminus\{r <
   r_1\})\cup_{\Phi_{12}}(V_2\setminus\{r<r_2\}) \cong [1,2]\times
   S^1\times S^1, \cr
   M_{34} &:= (V_3\setminus\{r <
   r_3\})\cup_{\Phi_{34}}(V_4\setminus\{r<r_4\}) \cong [3,4]\times
   S^1\times S^1. 
\end{align*}
By construction, $X$ generates the foliation by circles
$\{(r,\phi)\}\times S^1$ in the $z$-direction on $M_{12}$ and
$M_{34}$. Note that 
$$
   \Bigl(M\setminus(M_{12}\cup M_{34}),\om\Bigr) \cong
   \Bigl(S^*\Sigma\setminus(V_1\cup\cdots\cup V_4), d\tilde\lambda
   \Bigr).  
$$
In particular, the closed Reeb orbits $\gamma_i'$ defined above
can be viewed as sitting on the boundary components $\{i\}\times
S^1\times S^1$, $i=1,\dots,4$, of $M_{12}$ resp.~$M_{34}$. 

Now we can prove (i). Recall from the
construction of $\Sigma$ that $\bar\gamma_2-\bar\gamma_1$ is the
boundary of a region $\Sigma_{12}\subset\Sigma$ diffeomorphic to a
$2$-torus with two disks removed. In particular, $\bar\gamma_1$ and
$\bar\gamma_2$ are homologous in $\Sigma$. Their lifts to
$S^*\Sigma$ satisfy
\begin{equation}\label{eq:hom1}
   [\gamma_2] = [\gamma_1] - 2[F] \in H_1(S^*\Sigma),
\end{equation}
where $[F]$ is the class of a fibre of the circle bundle
$S^*\Sigma\to\Sigma$ and $-2$ is the Euler characteristic of
$\Sigma_{12}$. The analogous argument applied to the region
$\Sigma_{34}\subset\Sigma$ bounded by $\bar\gamma_4-\bar\gamma_3$
yields 
\begin{equation}\label{eq:hom2}
   [\gamma_4] = [\gamma_3] - 2[F] \in H_1(S^*\Sigma).
\end{equation}
(The precise coefficient $-2$ in front of $[F]$ in~\eqref{eq:hom1}
and~\eqref{eq:hom2} will not matter for us, but it will be important
that the coefficient is the same in both equations.) 
Since the regions $\Sigma_{12}$ and $\Sigma_{34}$ are disjoint, 
relation~\eqref{eq:hom1} continues to hold in
$H_1\bigl(S^*\Sigma\setminus(\gamma_3\cup\gamma_4)\bigr)$, and
relation~\eqref{eq:hom2} in
$H_1\bigl(S^*\Sigma\setminus(\gamma_1\cup\gamma_2)\bigr)$. Replacing
the $\gamma_i$ by their push-offs $\gamma_i'$, we obtain the
relations 
\begin{equation}\label{eq:hom3}
   [\gamma_2'] - [\gamma_1'] + 2[F] = [\gamma_3'] - [\gamma_4'] + 2[F] = 0
   \in H_1\Bigl(S^*\Sigma\setminus(V_1\cup\cdots\cup V_4)\Bigr). 
\end{equation}
Suppose now that $\nu$ is a $1$-form on $M$
stabilizing $X$. By Lemma~\ref{lem:c}, on \\
$S^*\Sigma\setminus(V_1\cup\cdots\cup V_4)$ we have 
$$
   \nu = c\tilde\lambda + \beta
$$
for some constant $c\neq 0$ and some closed $1$-form $\beta$ on
$S^*\Sigma\setminus(V_1\cup\cdots\cup V_4)$. Integrating $\nu$ over
$\gamma_1',\dots,\gamma_4'$ and using~\eqref{eq:hom3}
and~\eqref{eq:T}, we obtain  
\begin{equation}\label{eq:int1}
   \int_{\gamma_2'}\nu - \int_{\gamma_1'}\nu = c(T_2-T_1) -
   2\int_F\beta, 
\end{equation}
\begin{equation}\label{eq:int2}
   \int_{\gamma_4'}\nu - \int_{\gamma_3'}\nu = c(T_4-T_3) -
   2\int_F\beta.  
\end{equation}
Due to condition~\eqref{eq:T-diff} on the $T_i$ (and since $c\neq 0$),
the right hand sides of~\eqref{eq:int1} and~\eqref{eq:int2} are not
equal. On the other hand, applying the observation (O) above to the regions
$M_{12}$ and $M_{34}$, we conclude that the left hand sides
of~\eqref{eq:int1} and~\eqref{eq:int2} are both zero. This
contradiction proves (i). 

For (ii), we need to construct a nowhere vanishing vector field $X$ on 
the manifold $M$ generating $\ker\om$ such that 
\begin{equation}\label{eq:Euler-again}
   i_Xd\lambda=-dh,\qquad \lambda(X)>0
\end{equation} 
for some $1$-form $\lambda$ and function $h$ on
$M$. (One should not confuse $\lambda$ with the canonical contact form
on $S^*\Sigma$, which will not be used any more. It was explained in
the Introduction how to recover from these data a volume form and
metric for which $X$ satisfies the stationary Euler
equations~\eqref{eq1},~\eqref{eq2}.) 

On $M\setminus(M_{12}\cup M_{34})$ we take $\lambda:=\tilde\lambda$,
$X:=\tilde R$ its Reeb vector field, and $h:=0$, so $i_Xd\lambda=0$ and
$\lambda(X)=1$ on this region. Note that we can extend $(\lambda,X,h)$
slightly into $M_{12}$ and $M_{34}$ by the same formulas (we can 
actually extend them up to the region where the gluing happens).  
Recall that the kernel foliation of $\om$ on the region
$M_{12}\cong[1,2]\times S^1\times S^1$ consists of the circles
$\{(r,\phi)\}\times S^1$ in the $z$-direction. By
construction of $\tilde\lambda$, the given data are equal to  
$$
   \lambda=T_idz+\lambda_{D_i},\qquad X=T_i^{-1}\p_z,\qquad h=0
$$
near the boundary component $\{i\}\times T^2$, $i=1,2$. We extend
$\lambda$ and $X$ over $M_{12}$ satisfying~\eqref{eq:Euler-again} as
follows (the extension over $M_{34}$ is analogous). 

Let $b$ be a positive function on $[1,2]$ which is constant $T_1$ near
$1$, constant $T_2$ near $2$, and has regions 
with positive derivative as well as regions with negative
derivative. Let $g$ be a positive function on $[1,2]$ which is constant
$T_1^{-1}$ near $1$, constant $T_2^{-1}$ near $2$, and such that  
\begin{equation}\label{eq:int}
   \int_1^2g(r)b'(r)dr=0.
\end{equation}
(This is possible because $b'$ changes signs.) We pick any $1$-form
$\lambda_D$ on $[1,2]\times S^1$ (depending only on $r$ and $\phi$)
which agrees with $\lambda_{D_i}$ near $\{i\}\times S^1$, $i=1,2$.  
Now we define the extension over $[1,2]\times T^2$ by
$$
   \lambda:=b(r)dz+\lambda_D,\qquad X:=g(r)\p_z,\qquad
   h(r):=\int_1^rg(s)b'(s)ds. 
$$
The choice of $b$ and $g$ ensures that this matches the given data
near the boundary (condition~\eqref{eq:int} ensures that $h(r)=0$ near
$r=2$). Note that $X$ generates $\ker\om$. Since  
$$
   i_Xd\lambda = i_{g(r)\p_z}b'(r)dr\wedge dz+i_{g(r)\p_z}b'(r)d\lambda_D=-g(r)b'(r)dr+0 = -dh
$$
and $\lambda(X) = b(r)g(r)>0$, the triple $(\lambda,X,h)$
satisfies~\eqref{eq:Euler-again}. This concludes the proof of
Lemma~\ref{lem:mainres}, and hence of
Proposition~\ref{prop:counterex}. 
\end{proof}

%%%%%%%%%%%%%%%%%%%%%%%%%%%%%%%%%%%%%%%%%%%%%%%%%%%%%%%%%%%%%%%%%%%%%
%%%%%%%%%%%%%%%%%%%%%%%% REFERENCES %%%%%%%%%%%%%%%%%%%%%%%%%%%%%%%%%
%%%%%%%%%%%%%%%%%%%%%%%%%%%%%%%%%%%%%%%%%%%%%%%%%%%%%%%%%%%%%%%%%%%%%

\end{document}